\numberwithin{equation}{section}    
\newtheorem{assumption}{Assumption}[section] 
\newtheorem{theorem}{Theorem}[section]
\newtheorem{corollary}[theorem]{Corollary}
\newtheorem{lemma}[theorem]{Lemma}
\newtheorem{proposition}[theorem]{Proposition}
\theoremstyle{definition}
\newtheorem{remark}{Remark}
\title[Vanishing viscosity limit to vortex sheet ]
      {Vanishing viscosity limit to vortex sheet\\
      for the isentropic compressible \\
      circularly symmetric 2D flow }
\author[Helong Lu,]{}
 \keywords{}
\email{\texttt{luhelong1988@126.com, }}
 \email{}
 \email{}
\begin{document}
\maketitle

\centerline{\scshape Helong Lu }

\medskip
{\footnotesize
}

\medskip

\bigskip
\begin{abstract}
  In this paper, we consider the small viscosity limit problem for the isentropic compressible Navier-Stokes equations in a 2D exterior domain with impermeable boundary conditions , and the corresponding Euler equations  have  vortex sheet solutions.
  We obtain that away from the boundary and the contact discontinuous the isentropic compressible viscous flow can be approximated by the corresponding inviscid flow, near the boundary (the contact discontinuous) there is a boundary layer (vortex layer)for the angular velocity in the leading order expansion of solution, while the radial velocity and the pressure do not have boundary layers (vortex layers) in the leading order. We rigorously justify the asymptotic behavior of solutions in the $L^{\infty}$ space for the small viscosities limit in the Lagrangian coordinates.

\textbf{key words.} {boundary layer, vortex sheet, vortex layer, compressible viscous flows, circularly symmetric}
\end{abstract}

\section{ Introduction}
We consider the compressible viscous flow in 2-D domain.
\begin{equation}\label{eq1}
\begin{cases}
\partial_t\rho^{\epsilon} + \rm div(\rho^{\epsilon} \mathbf{u}^{\epsilon})&=0,\\
\\
\rho^{\epsilon}\{\partial_t \mathbf{u}^{\epsilon}+ (\mathbf{u}^{\epsilon}\cdot\nabla)\mathbf{u}^{\epsilon}\}+ \nabla p^{\epsilon}
&=\epsilon\{\mu\Delta \mathbf{u}^{\epsilon} +(\lambda +\mu )\nabla(\rm div  \mathbf{u}^{\epsilon})\} .
\end{cases}.
\end{equation}
where $\rho^{\epsilon}, \ p^{\epsilon}= \frac{1}{\gamma}(\rho^{\epsilon})^{\gamma}$, and
$ \mathbf{u}^{\epsilon}=(u^{\epsilon},v^{\epsilon})^T$  denote the fluid density, the pressure and the velocity, respectively;
$\lambda$ and $ \mu$ are constants viscosity coefficients, $\lambda + \mu>0$ and $\mu> 0$.

Let $\Omega \subseteq \mathbf{R}^2$, and suppose that the flow is occupied in $\Omega$,
$(\rho^{\epsilon}, \mathbf{u}^{\epsilon}) \mid_{t=0} =(\rho_0,\mathbf{u}^{\epsilon}_0)(x)$.\\
and the boundary condition
\begin{equation}\label{b1}
 \mathbf{u}^{\epsilon}\cdot n=0,\ \ \ \mathbf{u}^{\epsilon}\cdot \tau =u^{\epsilon}_{\tau},  \ \ \ t>0, x\in \partial\Omega.
\end{equation}

We are interested in the asymptotic behavior of the flow described by the problem \eqref{eq1}-\eqref{b1},when the viscosity coefficients tends to zero.
Formally let $\epsilon\rightarrow 0$,  we have Euler system:
\begin{equation}\label{eq2}
\begin{cases}
\partial_t\rho+ \rm div(\rho \mathbf{u})&=0,\\
\rho(\partial_t \mathbf{u}+ \mathbf{u}\cdot\nabla\mathbf{ u})+ \nabla p &=0,\\
(\rho,\mathbf{u})|_{t=0}&=(\rho_0, \mathbf{u}_0)(x),\\
\mathbf{u}\cdot n&=0.
\end{cases}
\end{equation}
\\
and there isn't any constraint on the tangential velocity field on the boundary $\partial\Omega$ for the compressible inviscid flow. Thus, it is quiet different from the boundary conditions \eqref{b1} for viscous flow. This kind of inconsistent boundary conditions  between the viscous flow and the inviscid flow gives rise to a very thin layer near the boundary for the small viscosities limit,and this layer is known as the boundary layer,in which the behavior of flow  changes dramatically. To study the behavior of boundary layers is a very interesting and classical problem in the fluid mechanics.

the boundary layer problem for incompressible flow with nonslip boundary condition was studied by Prandtl first in 1904. Prandtl \cite{Prandtl} studied the small viscosity limit for the incompressible Navier-Stokes equations with the nonslip boundary condition and formally derived that the boundary layer is described by a degenerate parabolic-elliptic coupled system which is so-called Prandtl equations. There have been many interesting results on the well-posedness or ill-posedness of the Prandtl equations, one can see \cite{Alex,WE2,G3,G5,Guo, Hong, Mas, Ole,zp} ,for instance, and there also some works devoted to the validity of boundary layer theory, see \cite{Mas1,Sam1,Sam2,TW}, for instance.

There are a few results about the small viscosity limit for the compressible flow, as it is even more complicated than the incompressible case, see \cite{LW,XY,WX,FR,WW}.

We are interest in circularly symmetric solution of the form:
$$ \rho^{\epsilon}(t,x)= \tilde{\rho}^{\epsilon}(t,r),\ \ \
\mathbf{u}^{\epsilon}(t,x)=(\tilde{u}^{\epsilon},\tilde{v}^{\epsilon})^T (t,r),\  \
p^{\epsilon}(t,x)=\tilde{p}^{\epsilon}(t,r)\ \ t>0, r> a.$$
where $u^{\epsilon}, v^{\epsilon} $ denote the radial and angular component of the velocity,respectively; and the compressible inviscid flow admits a vortex sheet solution.

For the inviscid flow containing shock or rarefaction waves, see \cite{GX,HWY,X1,Y,ZHH,ZWT},for instance.
In the case that the solutions to the Euler system containing contact discontinuity is much more subtle,  see \cite{HLM,HMX,M}, for instance.
Here  we consider the compressible inviscid flow admits a vortex sheet solution, but ignore the initial layer by construct a special initial smooth data for the viscous flow.

This paper is organized as follows. First, in section 2, we prove the local existence of vortex sheet, and then in section 3, we rewrite the problem in lagrangian coordinates and then formally derive asymptotic expansions of solutions w.r.t.$\epsilon$ and deduce problems of outer expansion profiles, boundary layer profiles and vortex layer profiles by multi-scale analysis. in section 4, we give the well-posedness of problems for boundary layer profiles, vortex layer profiles and outer expansion profiles of solutions. In section 5, we study the stability of vortex layers and boundary layers, and then rigorously justify the asymptotic expansions for the small viscosity limit.
\\
\section{local existence of vortex sheet}
We rewrite the equation \eqref{eq1} in circularly symmetric form:
\begin{equation}\label{eq3}
\begin{cases}
\partial_t\rho^{\epsilon} + \partial_r(\rho^{\epsilon} u^{\epsilon})+ \frac{\rho^{\epsilon}}{r}u^{\epsilon}=0,\\
\rho^{\epsilon}(\partial_t u^{\epsilon} + u^{\epsilon}\partial_r u^{\epsilon})+\partial_r p^{\epsilon} - \frac{\rho^{\epsilon}}{r}(v^{\epsilon})^2=
\epsilon (\lambda+2\mu)\{\partial^2_r u^{\epsilon}+ \frac{1}{r}(\partial_r u^{\epsilon} - \frac{u^{\epsilon}}{r})\},\\
\rho^{\epsilon}(\partial_t v^{\epsilon} + u\partial_r v^{\epsilon}) + \frac{\rho^{\epsilon}}{r}u^{\epsilon}v^{\epsilon}=
\epsilon\mu\{\partial^2_r v^{\epsilon}+ \frac{1}{r}(\partial_r v^{\epsilon} - \frac{v^{\epsilon}}{r})\},\\
\end{cases}
\end{equation}
 Formally we have the compressible inviscid flow, described by the Euler system:
 \begin{equation}\label{eq4}
\begin{cases}
\partial_t\rho + \partial_r(\rho u)+ \frac{\rho}{r}u&=0,\\
\rho(\partial_t u + u\partial_r u)+\partial_r p - \frac{\rho}{r}v^2&=0,\\
\rho(\partial_t v + u\partial_r v) + \frac{\rho}{r}uv&=0,\\
\end{cases}
\end{equation}
At first we consider the system \eqref{eq4} admits a piecewise smooth solution.Then \eqref{eq4} can written as the balance laws£º
\begin{equation}\label{eq5}
\begin{cases}
\partial_t(r\rho)+\partial_r(r\rho u)&=0,\\
\partial_t(\rho u)+\partial_r(\rho u^2+ p)- \frac{\rho}{r}(v^2-u^2)&=0,\\
\partial_t(\rho v)+\partial_r(\rho uv) + \frac{2\rho}{r}uv&=0,\\
\end{cases}
\end{equation}
For a piecewise smooth solution of  \eqref{eq5} :
\[U=\begin{cases}
U_{-}=(\rho_{-},u_{-},v_{-})(t,r)&\ a<r<\varphi(t),\\
\\
U_{+}=(\rho_{+},u_{+},v_{+})(t,r)&\ r>\varphi(t),
\end{cases}\]
on the front $ r=\varphi(t)$, the Rankine-Hugoniot conditions holds:
\[\begin{cases}
\varphi'[\rho] &=[\rho u],\\
\varphi'[\rho u]&=[(\rho u^2+p)],\\
\varphi'[\rho v]&=[\rho uv],
\end{cases}
\]
where the bracket $[\cdot]$ stands for the jump of the associated function across the front.

Suppose that $m=\rho(u-\varphi')=0$,\ on $r= \varphi(t)$, i.e., no mass transfer flux across the front, this corresponds to vortex sheet, on the front $r=\varphi(t):$
$$ [v]\neq 0, \quad \quad [p]=0=[u].$$

Therefore, we have the following free boundary problem:
\begin{equation}\label{eq6}
\begin{cases}
U_{-},U_{+}\  \text { satisfy \eqref{eq5} in classical sense},\\
u_{-}=0,\quad r=a,\\
\rho_{+}=\rho_{-},\ \ u_{+}=u_{-}=\varphi', \ \quad  r= \varphi(t),\\
U_{\pm}|_{t=0}=(\rho_{\pm,0},u_{\pm,0},v_{\pm,0})(r).
\end{cases}
\end{equation}

In order to study the local existence of vortex sheet, first using the following coordinate transform to straighten the free boundary.
\[ \begin{cases}
t=\widetilde{t}\\
\\
r=\Phi(\widetilde{t},\widetilde{r})=a+\frac{\varphi(\widetilde{t})-a}{\varphi(0)-a}(\widetilde{r}-a)
\end{cases}
\]\\

Let $U=(\rho, u ,v)^{T}$, $c^2=p'(\rho)$,
drop the tildes, then
\begin{equation}\label{eq7}
\begin{cases}
\partial_t U +B(U,\varphi)\partial_r U + C(U,\varphi)= 0,\\
u_{-}= 0,\quad \text{on}\ \  r= a \\
\rho_{+}=\rho_{-},u_{+}=u_{-}=\varphi' \  \ \text{on}\ \  r= b\\
U|_{t=0}=U^{0}(r)=(\rho_0,u_0,v_0)(r),\\
\end{cases}
\end{equation}
where $b \triangleq \varphi(0)$,
\[
B=
\frac{1}{\Phi_r}
\left(
\begin{array}{ccc}
(u-\Phi_t) & \rho  & 0\\
\\
\frac{c^2}{\rho} &(u-\Phi_t) & 0\\
\\
0&0& (u-\Phi_t)\\
\end{array}
\right),
\ \ \ \ \ \ \ \
C=\frac{1}{\Phi}
\left(
\begin{array}{ccc}
\rho u\\
\\
- v^2\\
\\
uv\\
\end{array}
\right)
\]
\\
and the corresponding compatibility conditions holds.

For the  problem \eqref{eq7}, we take the following iteration scheme:
\begin{equation}\label{eq17}
\begin{cases}
\partial_tU_{\pm}^{(n+1)} + B(U_{\pm}^{(n)},\varphi^{(n)})\partial_r U_{\pm}^{(n+1)} + C(U_{\pm}^{(n)},\varphi^{(n)})=0,\\
U_{\pm}^{(n+1)}|_{t=0}= U_{\pm}^0(r),\\
U_{+,1}^{(n+1)}=U_{-,1}^{(n+1)},U_{+,2}^{(n+1)}=U_{-,2}^{(n+1)}, \  \ \ r=b,\\
U_{-,2}^{(n+1)}=0,\ \ \ r=a,
\end{cases}
\end{equation}
 starting with $U^{(0)}_{\pm}= U^0_{\pm}$, and where
$\varphi^{(n)}(t)=b+ \int_0^t U_{-,2}^{(n)}(s,b)ds$.

If we prove  $\{U^{(n)}\}$ is a Cauchy sequence and uniformly bounded in $C^1$ norm, then the system \eqref{eq7} has a unique  solution.

We shall adapt Corli's idea \cite{C} to prove this result.

So, we consider the linearized problem:
\begin{equation}\label{eq8}
\begin{cases}
\partial_t w + B(U,\varphi)\partial_r w = f ,\\
w_{+,1}=w_{-,1},\ w_{+,2}= w_{-,2},\  \  \ \  r=b,\\
w_{-,2}=0, \ \ \ \  r=a,\\
w|_{t=0}= w^{0}(r).
\end{cases}
\end{equation}
where $\varphi(t)= b + \int_0^t U_{-,2}(s,b)ds.$

We know that the eigenvalues of $B(U,\varphi)$ are
$$ 
  \lambda_1 =\frac{1}{\Phi_r}( u-\Phi_t-c), \  \lambda_2 = \frac{1}{\Phi_r}(u-\Phi_t), \   \lambda_3 = \frac{1}{\Phi_r}(u-\Phi_t + c). $$
then the corresponding right eigenfunctions are:
$$ r_1=(\rho ,-c,0)^{T},\ \ r_2=(0,0,1)^{T}, \ \ r_3=(\rho ,c,0)^{T}.$$
Let$R=(r_1,r_2,r_3)$, we got
\[R^{-1}=(l_1,l_2,l_3)^{T}=\frac{1}{2\rho c}\cdot\left(
\begin{array}{ccc}
c & -\rho  & 0\\
0 & 0 & 2\rho c\\
c & \rho  &0\\
\end{array}
\right)
\]
Let $\widetilde{w}=R^{-1} w$, since $ (\partial R^{-1})R =-R^{-1}\partial R $, then we have:
\begin{equation}\label{eq9}
\begin{cases}
\partial_t\widetilde{w} +\Lambda(U,\varphi)\partial_r \widetilde{w}=
m \widetilde{w} +\widetilde{f},\\
[\widetilde{w_1}]=0, [\widetilde{w_3}]=0,\ \ \ \ r=b,\\
\widetilde{w_1}= \widetilde{w_3},\ \ \  \ r=a,\\
\widetilde{w}|_{t=0}=R^{-1}w^{0}(r):=\widetilde{w}^0(r).
\end{cases}
\end{equation}
where
$ m= (\partial_t R^{-1} + \Lambda(U,\varphi)\partial_r R^{-1})R$, \ \
$\widetilde{f}= R^{-1}f$,\  $\Lambda=R^{-1}BR$.\\
In order to  consider the problem \eqref{eq9},we first  consider the diagonal linear problem
\begin{equation}\label{eq10}
\begin{cases}
\partial_t w+\Lambda(U,\varphi)\partial_r w=f,\\
[w_1]=0,[w_3]=0,\ \ \ \ \ r=b,\\
w_1=w_3, \ \ \ \ r=a,\\
w|_{t=0}=w^0(r).
\end{cases}
\end{equation}
We assume that two compatibility conditions hold for \eqref{eq10}.

Denote by $\Gamma_k (s;t,r)= (s,\gamma_k (s;t,r)) $ the characteristic curve of the operator $ \partial_t +\lambda_k\partial_r$ passing through $(t,r)$ at time $s=t,$ i.e.,
\[\begin{cases}
\frac{d\gamma_k(s;t,r)}{ds}=\lambda_k(s,\gamma_k (s;t,r)),\\
\\
\gamma_k(t;t,r)=r.
\end{cases}
\]
\\
We denote
\begin{align*}
&\Omega^{\pm}_{T}=\{ (t,r): \pm(r-b)>0 \cap\{r\geq a\},0< t <T\},\\
&\Omega^{+}_1 = \{(t,r)\in\Omega^{+}_{T};r\geq \gamma^{+}_3(t;0,b)\},\\
&\Omega^{-}_1 =\{(t,r)\in\Omega^{-}_{T};\gamma^{-}_3(t;0,a)\leq r \leq  \gamma^{-}_1(t;0,b)\},\\
&\Omega_2 = \{(t,r)\in\Omega^{-}_{T};a\leq r \leq  \gamma^{-}_3(t;0,a)\},\\
&\Omega_3 =  \{(t,r)\in\Omega^{-}_{T};\gamma^{-}_1(t;0,b)\leq r \leq  b\},\\
&\Omega_4 =\{(t,r)\in\Omega^{+}_{T};b\leq r\leq\gamma^{+}_3(t;0,b)\},
\end{align*}
where $ \gamma^{\pm}_k = \gamma_k |_{\Omega^{\pm}_T}$.

We can now write  the explicitly  solution of \eqref{eq10}, so in $\Omega^{\pm}_1$
\begin{equation}\label{s1}
w^{\pm}_j=w_j^0(\gamma^{\pm}_j(0;t,r)) +\int_0^t f_j(s;\gamma^{\pm}_j(s;t,r))ds, \ \ \ j=1,2,3.
\end{equation}
In $\Omega_2,$
\begin{align}\label{s2}
\begin{cases}
&w^{-}_1(t,r)=w_1^0(\gamma^{-}_1(0;t,r)) + \int_0^t f_1(s,\gamma^{-}_1(s;t,r))ds, \\
&w^{-}_2(t,r)=w_2^0(\gamma^{-}_2(0;t,r)) + \int_0^t f_2(s,\gamma^{-}_2(s;t,r))ds,\\
&w^{-}_3(t,r)
=w^{-}_1(\tau^{-}_3,a) + \int_{\tau^{-}_3}^t f_3(s,\gamma^{-}_3(s;t,r))ds.
\end{cases}
\end{align}
we have used the boundary condition $w_{1}= w_{3}$ on $ r=a$, and $ 0< \tau^{-}_3(t,r) <t$ is the unique root of $\gamma^{-}_3(\tau^{-}_3;t,r)=a $ for any fixed $(t,r)\in \Omega_2$.

In $\Omega_3\cup\Omega_4,$\\
\begin{align}\label{s3}
\begin{cases}
&w^{\pm}_2(t,r)=w_2^0(\gamma^{\pm}_2(0;t,r)) + \int_0^t f_2(s,\gamma^{\pm}_2(s;t,r))ds,\\
&w^{-}_3(t,r)=w_3^0(\gamma^{-}_3(0;t,r)) + \int_0^t f_3(s,\gamma^{-}_3(s;t,r))ds,\\
&w^{+}_1(t,r)=w_1^0(\gamma^{+}_1(0;t,r)) + \int_0^t f_1(s,\gamma^{+}_1(s;t,r))ds, \\
&w^{-}_1(t,r)
=w_1^{+}(\tau^{-}_1,b)+\int_{\tau^{-}_1}^t f_1(s,\gamma^{-}_1(s;t,r))ds,\\
&w^{+}_3(t,r)
=w^{-}_3(\tau^{+}_3,b)+\int_{\tau^{+}_3}^t f_3(s,\gamma^{+}_3(s;t,r))ds.
\end{cases}
\end{align}
where $\gamma^{-}_1(\tau^{-}_1;t,r)=b,\ \ \  \gamma^{+}_3(\tau^{+}_3;t,r)=b.$

Then we have the following results.

\begin{proposition}\label{p1}
Let $ U,f,w^0$ be some family of functions bounded in $C^1(\Omega^{\pm}_{T}),C^0(\Omega^{\pm}_{T}), C^0(I^{\pm})$, respectively, where $I^{\pm}=\Omega^{\pm}_{T}\cap\{t=0\}$, for some $T\in (0,T_0]$;assume that the corresponding compatibility condition holds. Then problem \eqref{eq10} has a unique solution $w$ bounded in $C^0(\Omega^{\pm}_{T})$, and such that
$$\|w(t)\|\leq C\left(\|w^0\|+\int_0^t\|f(s)\|ds \right)$$
for some constant C. Where $\parallel \cdot \parallel $ denote the $C^{0}$ norm.

\end{proposition}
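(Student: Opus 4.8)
The plan is to establish Proposition \ref{p1} directly from the explicit solution formulas \eqref{s1}--\eqref{s3}, since the problem \eqref{eq10} is a linear diagonal hyperbolic system whose solution has already been written out region by region. The first step is to verify that the regions $\Omega^{\pm}_1$, $\Omega_2$, $\Omega_3$, $\Omega_4$ cover $\Omega^{\pm}_T$ (possibly shrinking $T$ so that the characteristic curves $\gamma^{\pm}_k$ issuing from $(0,b)$ and $(0,a)$ do not cross before time $T$, which is where $T_0$ comes from — it depends only on the $C^0$-bound on $\Lambda$, hence on the given $C^1$ bound on $U$). One must also check that $\tau^{-}_3$, $\tau^{-}_1$, $\tau^{+}_3$ are well-defined, i.e.\ that the relevant characteristic actually hits the boundary $r=a$ or the front $r=b$ exactly once; this follows from the sign conditions built into the region definitions together with monotonicity of $s\mapsto \gamma_k(s;t,r)$ in $s$ along each characteristic (the speeds $\lambda_k$ are bounded and $\lambda_1<\lambda_2<\lambda_3$). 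Existence and uniqueness of $w$ then amount to the observation that \eqref{s1}--\eqref{s3} define $w$ consistently across the interfaces between adjacent regions (continuity of each $w_j$ across $\gamma_3^{\pm}(t;0,b)$ etc.), which is where the assumed compatibility conditions at the corners $(0,a)$ and $(0,b)$ enter; any $C^0$ solution must coincide with this formula by integrating along characteristics, giving uniqueness.

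The second step is the stability estimate. In $\Omega^{\pm}_1$ it is immediate from \eqref{s1}: $|w^{\pm}_j(t,r)| \le \|w^0\| + \int_0^t \|f(s)\|\,ds$, so $\|w(t)\|_{\Omega^{\pm}_1}$ is bounded by the right-hand side with $C=1$. In $\Omega_2$ the formula \eqref{s2} for $w_3^{-}$ involves the trace $w_1^{-}(\tau_3^{-},a)$, which by the first two lines of \eqref{s2} is itself controlled by $\|w^0\| + \int_0^{\tau_3^{-}} \|f(s)\|\,ds \le \|w^0\| + \int_0^t \|f(s)\|\,ds$; adding the integral of $f_3$ along the outgoing characteristic gives the bound with $C=2$ (or simply $C=2$ absorbing the two contributions). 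The regions $\Omega_3 \cup \Omega_4$ are handled the same way: $w_1^{-}$ and $w_3^{+}$ are expressed through traces $w_1^{+}(\tau_1^{-},b)$, $w_3^{-}(\tau_3^{+},b)$ which are in turn bounded by the already-established estimates on the components defined by \eqref{s3}. Chaining these reflections/transmissions a bounded number of times (the number of region crossings is uniformly bounded on $[0,T_0]$) yields $\|w(t)\| \le C(\|w^0\| + \int_0^t \|f(s)\|\,ds)$ with $C$ depending only on the number of regions, hence only on $T_0$ and the bounds on $U$. Note no Gr\"onwall is needed here because the right-hand side of \eqref{eq10} contains no zeroth-order term in $w$; the factor $m\widetilde w$ only appears later in \eqref{eq9}.

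A minor technical point to address is that $w$ is only asserted to be $C^0$, not $C^1$, even though $w^0 \in C^0$ and $f \in C^0$ would normally give $C^1$ regularity along characteristics; the statement is deliberately at the $C^0$ level because the iteration \eqref{eq17} will need stability in the sup-norm, and because across the interfaces $\gamma_k(t;0,\cdot)$ the solution built from \eqref{s1}--\eqref{s3} is generally only continuous, not $C^1$, unless higher-order compatibility is imposed. So the argument should carefully phrase existence as: the piecewise formula defines a function that is continuous on all of $\Omega^{\pm}_T$ (using the compatibility condition for matching) and satisfies the equation classically in the interior of each region.

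I expect the main obstacle to be bookkeeping rather than mathematics: one must be careful that the decomposition into $\Omega^{\pm}_1, \Omega_2, \Omega_3, \Omega_4$ is exhaustive and that the traces invoked in \eqref{s2}--\eqref{s3} always land in a region where the corresponding component has already been determined, so that the definition is not circular and the estimate can be obtained by a finite chain of inequalities with an a priori bounded number of links. Making the choice of $T_0$ explicit (so that, e.g., $\gamma_3^{+}(t;0,b)$ stays to the right of $\gamma_1^{-}(t;0,b)$, and the backward characteristics from interior points reach $t=0$ or a boundary before leaving the relevant region) is the part that requires the most care, and it is exactly where Corli's construction in \cite{C} is adapted.
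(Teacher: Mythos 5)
Your proposal is correct and follows essentially the same route as the paper: the paper gives no separate proof of Proposition \ref{p1} precisely because the explicit characteristic formulas \eqref{s1}--\eqref{s3}, written out region by region just before the statement, \emph{are} the proof (existence and uniqueness by integration along characteristics, the $C^0$ bound by chaining the finitely many trace reflections at $r=a$ and $r=b$, following Corli \cite{C}). Your observations that no Gr\"onwall argument is needed at this stage (the zeroth-order term $m\widetilde w$ only enters in \eqref{eq9}) and that the compatibility conditions are what make the piecewise formula continuous across the interfaces are exactly the right points to make explicit.
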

\begin{corollary}\label{cor1}
Under the assumption of Proposition \ref{p1},  assume that $ w^0 \in C^1(I^{\pm})$ and  the corresponding compatibility conditions hold. Let $ \alpha_i,\beta_i$ be some functions in $C^1(\Omega^{\pm}_{T}), 1\leq i\leq3$, and take
$$f_i = \alpha_i(\partial_t\beta_i + \lambda_i\partial_r\beta_i)$$
Then the solution $w$ to \eqref{eq10} is in $C^1$.
\end{corollary}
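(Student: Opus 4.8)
The plan is to argue directly from the explicit representation formulas \eqref{s1}--\eqref{s3} (which already underlie Proposition \ref{p1}) and to verify that, under the stronger hypotheses, every term occurring in them is $C^1$ in $(t,r)$ up to the boundaries. First I would pin down the regularity of the geometric ingredients. Since $U\in C^1(\Omega^{\pm}_T)$, the eigenvalues $\lambda_j$ belong to $C^1(\Omega^{\pm}_T)$, so by the classical theory of ODEs the characteristics $\gamma_j(s;t,r)$ depend in a $C^1$ way on $(s,t,r)$, and $\partial_r\gamma_j$, $\partial_t\gamma_j$ solve the associated linear variational equations (whose coefficients involve only $\partial_r\lambda_j\in C^0$). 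Moreover each crossing time $\tau_j^{\pm}(t,r)$ is $C^1$ by the implicit function theorem: along $r=b$ one has $u-\Phi_t=0$, hence $\lambda_1=-c/\Phi_r\neq0$ and $\lambda_3=c/\Phi_r\neq0$ there, and along $r=a$ one has $u=0=\Phi_t$, hence $\lambda_3=c/\Phi_r\neq0$, so the reflected characteristics meet $r=a$ and $r=b$ transversally.

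With this in hand the ``data'' contributions are immediate: a term $w_j^0(\gamma_j(0;t,r))$ is $C^1$ because $w^0\in C^1(I^{\pm})$ and $\gamma_j(0;\cdot,\cdot)\in C^1$, while a reflected term such as $w_1^+(\tau_1^-,b)$, $w_3^-(\tau_3^+,b)$ or $w_1^-(\tau_3^-,a)$ is $C^1$ because the trace on $r=a$ or $r=b$ of the $C^1$ piece already constructed in the neighbouring region is composed with the $C^1$ function $\tau_j^{\pm}$; the $C^1$ compatibility conditions at the corners $(0,a)$ and $(0,b)$ are precisely what is needed for the pieces glued along the separating characteristics $\gamma_3^-(t;0,a)$, $\gamma_1^-(t;0,b)$, $\gamma_3^+(t;0,b)$ to match to first order.

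The crux, and the only place where the form $f_i=\alpha_i(\partial_t\beta_i+\lambda_i\partial_r\beta_i)$ is used, is to show that the source contributions $F_j(t,r):=\int_{t_0}^{t}f_j(s,\gamma_j(s;t,r))\,ds$, with $t_0$ equal to $0$ or to one of the $C^1$ crossing times, are $C^1$. Differentiation under the integral sign is not licit, $f_j$ being merely continuous. Instead I would use the identity $f_j(s,\gamma_j(s;t,r))=\alpha_j(s,\gamma_j)\,\frac{d}{ds}\big[\beta_j(s,\gamma_j)\big]$ along $\Gamma_j$, where $\frac{d}{ds}\beta_j(s,\gamma_j)=(\partial_t\beta_j+\lambda_j\partial_r\beta_j)(s,\gamma_j)$. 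Forming the difference quotient of $F_j$ in $r$ and writing $\gamma_j^h=\gamma_j(s;t,r+h)$, I would split the increment of the integrand as $\big(\alpha_j(s,\gamma_j^h)-\alpha_j(s,\gamma_j)\big)\frac{d}{ds}\beta_j(s,\gamma_j)+\alpha_j(s,\gamma_j^h)\frac{d}{ds}\big[\beta_j(s,\gamma_j^h)-\beta_j(s,\gamma_j)\big]$ and integrate by parts in $s$ in the second piece. Since $\alpha_j,\beta_j,\gamma_j\in C^1$, the quotients $\big(\alpha_j(s,\gamma_j^h)-\alpha_j(s,\gamma_j)\big)/h$ and $\big(\beta_j(s,\gamma_j^h)-\beta_j(s,\gamma_j)\big)/h$ converge uniformly to $(\partial_r\alpha_j)(s,\gamma_j)\,\partial_r\gamma_j$ and $(\partial_r\beta_j)(s,\gamma_j)\,\partial_r\gamma_j$, and the $s$-derivative surviving the integration by parts is $(\partial_t\alpha_j+\lambda_j\partial_r\alpha_j)(s,\gamma_j)\in C^0$, together with boundary evaluations at $s=t$ and $s=t_0$. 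Letting $h\to0$ produces an explicit formula for $\partial_rF_j$ as a finite sum of continuous functions (and similarly for $\partial_tF_j$, where an additional harmless term $f_j(t,r)$ appears); a moving lower limit only adds the term $-f_j(t_0,\gamma_j(t_0;t,r))\,\partial_rt_0$, continuous because $f_j\in C^0$, $t_0\in C^1$ and $\gamma_j(t_0;t,r)\in\{a,b\}$. Hence $F_j\in C^1$.

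Putting the three steps together, each formula in \eqref{s1}--\eqref{s3} exhibits $w$ as a finite sum of $C^1$ functions, so $w\in C^1$. I expect the main obstacle to be exactly the third step: one must resist differentiating under the integral and instead exploit the transport-derivative structure of $f_i$ to trade, via an integration by parts in the characteristic parameter $s$, the unavailable second derivative of $\beta_i$ for a first-order quantity; bookkeeping of the moving reflection times $\tau_j^{\pm}$ and of the corner compatibility is a secondary, routine technicality.
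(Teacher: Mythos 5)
Your proposal is correct and follows essentially the same route as the paper: the paper likewise reduces everything to the explicit representation formulas \eqref{s1}--\eqref{s3}, isolates the two generic forms of the source integrals, and exploits the transport-derivative structure $f_i=\alpha_i\frac{d}{ds}\beta_i$ along the $i$-th characteristic to integrate by parts in $s$, arriving at exactly the formula for $\partial_r g$ that your difference-quotient limit produces. Your treatment is somewhat more detailed (justifying the integration by parts without a second derivative of $\beta_i$, and spelling out the $C^1$ dependence of the characteristics and crossing times), but the key idea is identical to the paper's sketch, which it attributes to Hartman--Wintner.
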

\begin{proof}
The idea can be given in \cite{HW}, for completeness we sketch the idea in the following way.
Since the second factor in $f_i$ is the derivative of $\beta_i$ along the i-characteristic direction.
From \eqref{s1}-\eqref{s3}, the representation of the components of solution can be classified into 2 forms:
\begin{enumerate}
\item[\rm (i)]
 $g (t,r)=g^0(\gamma(0;t,r)) +\int_0^t \alpha(s,\gamma(s;t,r))\frac{d}{ds}\beta(s,\gamma(s;t,r)) ds,$
\item[\rm(ii)]
$g (t,r)=h (\tau(t,r)) +\int_{\tau}^t \alpha(s,\gamma(s;t,r))\frac{d}{ds}\beta(s,\gamma(s;t,r))ds.$
\end{enumerate}
where $0< \tau(t,r) < t$  is $C^1 $ function.

 one suitably integrates by parts and find the derivatives of $g$ through the form $(i)$.i.e.,
\begin{align*}
\partial_r g  =&
\int_0^t \left(\partial_r\alpha(s,\gamma(s;t,r))\frac{d}{ds}\beta(s,\gamma(s;t,r))
- \partial_r\beta(s,\gamma(s;t,r))\frac{d}{ds}\alpha(s,\gamma(s;t,r))  \right)ds\\
&+ (\alpha \partial_r \beta)(t,r)-(\alpha \partial_r \beta)(0,\gamma(0;t,r))+  \partial_r g^0(\gamma(0;t,r)) .
\end{align*}
In the same way one finds $\partial_t g$.
Similarly, one can check the smoothness for the form $(ii)$.
\end{proof}
\begin{proposition}\label{p2}
Let $ U,f,w^0 $ as in Proposition \ref{p1}, and the corresponding compatibility condition  holds. Then problem \eqref{eq8} has a unique solution $w$ bounded in $C^0({\Omega^{\pm}_{T}})$,and there exists some constant $C $ such that
\begin{equation}\label{1}
\|w(t)\|\leq C\left(e^{MCt}\|w^0\| + \int_0^t e^{MC(t-s)}\|f(s)\|ds\right),
\end{equation}
where $M\geq\|m\|$, $\parallel \cdot \parallel$ denote the $C^{0} $ norm.
\end{proposition}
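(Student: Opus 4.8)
The plan is to reduce \eqref{eq8} to the diagonal problem \eqref{eq10} treated in Proposition \ref{p1} by the change of unknown $\widetilde w=R^{-1}w$, to absorb the zeroth order term $m\widetilde w$ into the source, and to close everything with a Duhamel/Gronwall argument. First I would note that, since $U$ is bounded in $C^1(\Omega^{\pm}_{T})$ and remains in a compact set where $\rho$ and $c=\sqrt{p'(\rho)}$ are bounded and bounded away from zero, the matrices $R,R^{-1}$, as well as $\Lambda=R^{-1}BR$, $\widetilde f=R^{-1}f$ and $m=(\partial_tR^{-1}+\Lambda\partial_rR^{-1})R$, are bounded in $C^0$ (the first two even in $C^1$) with constants depending only on the bounds for $U$; in particular the $C^0$-norms $\|w\|$ and $\|\widetilde w\|$ are equivalent up to a fixed constant. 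Using the identity $(\partial R^{-1})R=-R^{-1}\partial R$, problem \eqref{eq8} becomes equivalent to \eqref{eq9}, whose boundary conditions $[\widetilde w_1]=[\widetilde w_3]=0$ on $r=b$, $\widetilde w_1=\widetilde w_3$ on $r=a$, and whose compatibility conditions, are exactly of the type admissible in Proposition \ref{p1}.

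For existence I would solve \eqref{eq9} by the iteration
\begin{equation*}
\partial_t\widetilde w^{(n+1)}+\Lambda\partial_r\widetilde w^{(n+1)}=m\widetilde w^{(n)}+\widetilde f,
\end{equation*}
with $\widetilde w^{(n+1)}$ carrying the initial data $\widetilde w^0$ and the boundary conditions of \eqref{eq9}, and $\widetilde w^{(0)}\equiv\widetilde w^0$. Each step is solvable in $C^0(\Omega^{\pm}_{T})$ by Proposition \ref{p1}, the compatibility conditions being preserved along the iteration. Subtracting two consecutive steps and applying the estimate of Proposition \ref{p1} with vanishing initial and boundary data gives
\begin{equation*}
\|\widetilde w^{(n+1)}(t)-\widetilde w^{(n)}(t)\|\le C\int_0^t\big\|m\big(\widetilde w^{(n)}-\widetilde w^{(n-1)}\big)(s)\big\|\,ds\le CM\int_0^t\big\|\widetilde w^{(n)}(s)-\widetilde w^{(n-1)}(s)\big\|\,ds,
\end{equation*}
hence $\|\widetilde w^{(n+1)}-\widetilde w^{(n)}\|_{C^0(\Omega^{\pm}_{T})}\le (CMT)^n/n!\cdot\mathrm{const}$, so $\widetilde w^{(n)}$ converges in $C^0$ to a solution $\widetilde w$ of \eqref{eq9}. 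Uniqueness follows because the difference of two solutions solves \eqref{eq9} with zero data and source $m(\widetilde w-\widetilde w')$, so Proposition \ref{p1} together with Gronwall's lemma forces it to vanish; undoing the substitution produces the unique $C^0$ solution $w$ of \eqref{eq8}.

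Finally, for the estimate \eqref{1}, I would apply Proposition \ref{p1} to \eqref{eq9} viewing $m\widetilde w+\widetilde f$ as the source and use $\|m\widetilde w(s)\|\le M\|\widetilde w(s)\|$ with $M\ge\|m\|$:
\begin{equation*}
\|\widetilde w(t)\|\le C\|\widetilde w^0\|+C\int_0^t\|\widetilde f(s)\|\,ds+CM\int_0^t\|\widetilde w(s)\|\,ds.
\end{equation*}
Gronwall's inequality then yields $\|\widetilde w(t)\|\le C\big(e^{CMt}\|\widetilde w^0\|+\int_0^t e^{CM(t-s)}\|\widetilde f(s)\|\,ds\big)$, and passing back to $w$ through the norm equivalence $\|w\|\simeq\|\widetilde w\|$ together with $\|\widetilde w^0\|\le C\|w^0\|$ and $\|\widetilde f\|\le C\|f\|$, after renaming constants, gives exactly \eqref{1}. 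The only point requiring genuine care is the bookkeeping of the compatibility conditions, so that Proposition \ref{p1} is legitimately applicable at every stage of the iteration (and that they survive the linear substitution $R^{-1}$); the remainder is the standard Picard iteration plus Gronwall estimate.
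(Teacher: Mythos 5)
Your proposal is correct and follows essentially the same route as the paper: diagonalize via $\widetilde w=R^{-1}w$ to reduce \eqref{eq8} to \eqref{eq9}, solve the latter by the Picard iteration \eqref{eq13} using Proposition \ref{p1} at each step, show the iterates form a Cauchy sequence in $C^0(\Omega^{\pm}_{T})$, and obtain \eqref{1} by Gronwall. Your write-up is in fact somewhat more explicit than the paper's (the factorial bound on consecutive differences, the norm equivalence $\|w\|\simeq\|\widetilde w\|$, and the uniqueness argument), but the underlying argument is identical.
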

\begin{proof}
We diagonal the system through the change variables $\widetilde{w}=R^{-1} w$. This system became as problem \eqref{eq9}. In order to solve problem \eqref{eq9} we consider the iterative scheme
\begin{equation}\label{eq13}
\begin{cases}
\partial_t\widetilde{w}^{(n+1)} +\Lambda(U,\varphi)\partial_r \widetilde{w}^{(n+1)}=
m \widetilde{w}^{(n)} +\widetilde{f},\\
[\widetilde{w_1}^{(n+1)}]=0, [\widetilde{w_3}^{(n+1)}]=0,\ \ \ \ r=b,\\
\widetilde{w}^{(n+1)}|_{t=0}=R^{-1}w^{0}(r),\\
\widetilde{w_1}^{(n+1)}= \widetilde{w_3}^{(n+1)},\ \ \  \ r=a.
\end{cases}
\end{equation}
starting with $\widetilde{w}^{(0)}|_{t=0}=R^{-1}w^{0}(r)$. In view of Proposition \ref{p1}, for each $n$ we can find a solution $\widetilde{w}^{(n+1)}$ to problem \eqref{eq13},and
$$\|\widetilde{w}^{(n+1)}(t)\| \leq C\left\{ \|\widetilde{w}^0\| + \int_0^t \|m \widetilde{w}^{(n)}(t) +\widetilde{f}(t)\|ds\right\}
\leq C\left\{ \|\widetilde{w}^0\| + \int_0^t M\|\widetilde{w}^{(n)}(t)\| +\|\widetilde{f}(t)\|ds\right\}.$$
From this  estimates it is easy to prove that the sequence$\{\widetilde{w}^{(n+1)}\}$ is a Cauchy sequences in $C^0(\Omega^{\pm}_{T})$, and its limits is solution to \eqref{eq9}. Therefore we have found a solution $w$ to \eqref{eq8}, and \eqref{1} holds.
\end{proof}

Now, we pass to  $C^1$ solutions; we consider again the problem
\begin{equation}\label{eq14}
\begin{cases}
\partial_t w + B(U,\varphi)\partial_r w = f ,\\
w_{+,1}=w_{-,1},\ w_{+,2}= w_{-,2},\  \  \ \  r=b,\\
w_{-,2}=0, \quad r=a,\\
w|_{t=0}= w^{0}(r),
\end{cases}
\end{equation}
\begin{proposition}\label{p3}
Let $ U,f,w^0$ be some families of functions and assume that they are bounded in $ C^1(\Omega^{\pm}_{T}),C^1(\Omega^{\pm}_{T}),C^1(I^{\pm})$, respectively, for some $T\in(0,T_0].$ Moreover assume that two compatibility conditions are satisfied. Then problem \eqref{eq14} has a unique solution $w$ bounded in $ C^1(\Omega^{\pm}_{T})$, satisfies \eqref{1} and
\begin{equation}\label{2}
\|\nabla w(t)\|\leq C\left\{e^{MCt}(\| \partial_r w^{0}\| +\|f(0)\|)+ \int_0^t e^{MC(t-s)}(\|f(s)\| + \|\nabla f(s)\|)ds\right\}
\end{equation}
for some constant C and M.
\end{proposition}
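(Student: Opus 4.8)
The plan is to bootstrap from the $C^0$ theory of Proposition \ref{p2} by treating the first derivatives of the diagonalized unknown as a new system of the same type. As in the proof of Proposition \ref{p2}, I would first pass to $\widetilde w = R^{-1}w$, which turns \eqref{eq14} into the diagonal problem \eqref{eq9} with source $m\widetilde w + \widetilde f$; since $U\in C^1$ and $\rho$ is bounded away from vacuum, the matrices $R$, $R^{-1}$, $\Lambda$, $m$ are all $C^1$ and bounded, so it suffices to produce a $C^1$ solution of \eqref{eq9} obeying the analogue of \eqref{2} and then set $w=R\widetilde w$. A $C^1$ solution $\widetilde w$, together with $w=R\widetilde w$, automatically satisfies \eqref{1}, and uniqueness is already contained in Proposition \ref{p2}.

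To control the derivatives I would run the iteration scheme \eqref{eq13} and prove by induction that each iterate $\widetilde w^{(n+1)}$ is $C^1$, with a bound uniform in $n$. Fix $n$ and put $g=m\widetilde w^{(n)}+\widetilde f\in C^1(\Omega^\pm_T)$. Then $\widetilde w^{(n+1)}$ is given explicitly by the reflection formulas \eqref{s1}--\eqref{s3} with $f$ replaced by $g$. One differentiates these formulas directly in $r$ and $t$ (no integration by parts is needed here, since $g$ is already $C^1$; the mechanism is that of Corollary \ref{cor1}): in the interior regions $\Omega^\pm_1$ one differentiates $\widetilde w^0(\gamma(0;t,r))+\int_0^t g(s,\gamma(s;t,r))\,ds$, the Jacobians $\partial_{t,r}\gamma$ solving the variational ODE along the characteristic, hence continuous and bounded because $\lambda_k\in C^1$; in the reflected regions $\Omega_2,\Omega_3,\Omega_4$ one uses that the reflection times $\tau^-_3,\tau^-_1,\tau^+_3$ are $C^1$ in $(t,r)$ (implicit function theorem, valid for $T$ small because $\lambda_1,\lambda_3\neq0$ transversally at $r=a,b$, the modes being genuinely in/outgoing at $t=0$) and that the traces $\widetilde w^{(n+1)}_1(\cdot,a)$, $\widetilde w^{(n+1)}_1(\cdot,b)$, $\widetilde w^{(n+1)}_3(\cdot,b)$ are $C^1$ in $t$ --- this being a finite cascade of reflections, each trace derivative estimated from the equation, $\frac{d}{ds}\widetilde w^{(n+1)}_j(s,a)=g_j-\lambda_j\partial_r\widetilde w^{(n+1)}_j$, in terms of quantities already controlled at the previous stage. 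Using the equation to write $\partial_t\widetilde w^{(n+1)}=g-\Lambda\partial_r\widetilde w^{(n+1)}$ (so only $\partial_r$ must be estimated), one arrives at
\[
\|\nabla\widetilde w^{(n+1)}(t)\|\le C\Big\{\|\partial_r\widetilde w^0\|+\|g(0)\|+\int_0^t\big(\|g(s)\|+\|\nabla g(s)\|\big)\,ds\Big\}.
\]
Since $\|g(s)\|\le M\|\widetilde w^{(n)}(s)\|+\|\widetilde f(s)\|$ and $\|\nabla g(s)\|\le \|\nabla m\|\,\|\widetilde w^{(n)}(s)\|+M\|\nabla\widetilde w^{(n)}(s)\|+\|\nabla\widetilde f(s)\|$, while $\|\widetilde w^{(n)}(s)\|$ is already bounded uniformly in $n$ by \eqref{1}, a Gronwall argument in the iteration index closes the induction and gives a bound on $\|\nabla\widetilde w^{(n)}\|$ uniform in $n$, of precisely the shape \eqref{2}.

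To pass to the limit: by Proposition \ref{p2} the sequence $\{\widetilde w^{(n)}\}$ is Cauchy in $C^0(\Omega^\pm_T)$; combined with the uniform $C^1$ bound (via a compactness argument of Ascoli--Arzel\`a type, or a direct $C^1$-Cauchy estimate for the differences $\widetilde w^{(n+1)}-\widetilde w^{(n)}$, which solve \eqref{eq9}-type problems with source $m(\widetilde w^{(n)}-\widetilde w^{(n-1)})$), the limit $\widetilde w$ is $C^1$, solves \eqref{eq9}, and inherits the estimate. Finally $w=R\widetilde w\in C^1(\Omega^\pm_T)$, and the product rule, together with the boundedness of $R,R^{-1}$ and their derivatives, converts the bound on $\nabla\widetilde w$ into \eqref{2} for $w$ --- the term $\|f(0)\|$ on the right coming from $\widetilde f(0)=R^{-1}(0)f(0)$ and from $\partial_t w|_{t=0}=f(0)-B(0)\partial_r w^0$, i.e.\ from the first-order compatibility data, the remaining lower-order data terms being absorbed into $\|\partial_r w^0\|$.

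I expect the main obstacle to be the $C^1$ matching across the curves $\gamma^-_3(\cdot;0,a)$, $\gamma^-_1(\cdot;0,b)$, $\gamma^+_3(\cdot;0,b)$ separating $\Omega_1,\dots,\Omega_4$: on either side of each such curve the relevant component of $\widetilde w^{(n+1)}$ is given by a different formula (interior versus reflected), and continuity of the first derivatives across it is exactly what the two compatibility conditions assumed at the corners $(0,a)$ and $(0,b)$ are there to ensure. Checking that these compatibility conditions propagate through the iteration --- so that every $\widetilde w^{(n+1)}$, not merely the data, satisfies them --- together with the bookkeeping of traces and reflection times near the corners, is the delicate part; the rest is the Gronwall/Cauchy machinery already used in Propositions \ref{p1}--\ref{p2}.
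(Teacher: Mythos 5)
Your overall architecture matches the paper's: diagonalize via $\widetilde w=R^{-1}w$, run the iteration \eqref{eq13}, show each iterate is $C^1$ with a uniform derivative bound, and pass to the limit by an Ascoli/Cauchy argument. But there is one genuine gap, and it sits exactly at the point the paper is most careful about: you assert that $m$ is ``$C^1$ and bounded'' and later that $g=m\widetilde w^{(n)}+\widetilde f$ is ``already $C^1$,'' so that the characteristic formulas can be differentiated directly and your derivative estimate can use $\|\nabla m\|$. This is false under the stated hypotheses. Since $m=(\partial_t R^{-1}+\Lambda\,\partial_r R^{-1})R$ and $R^{-1}$ depends on $U$, which is only assumed bounded in $C^1(\Omega^{\pm}_T)$, the entries of $m$ involve first derivatives of $U$ and are therefore merely continuous --- the paper says explicitly that ``$m$ is barely continuous.'' Consequently $\nabla m$ is not available, your bound $\|\nabla g\|\le\|\nabla m\|\|\widetilde w^{(n)}\|+M\|\nabla\widetilde w^{(n)}\|+\|\nabla\widetilde f\|$ does not make sense, and differentiating the Duhamel formulas ``directly'' fails because the source is not $C^1$.

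The fix is precisely the content of Corollary \ref{cor1}, which you cite but whose point you invert: the source $m\widetilde w^{(n)}=(\partial_t R^{-1}+\Lambda\,\partial_r R^{-1})\,R\,\widetilde w^{(n)}$ has the special structure $\alpha_i(\partial_t\beta_i+\lambda_i\partial_r\beta_i)$ with $\beta_i$ an entry of the $C^1$ matrix $R^{-1}$ and $\alpha_i$ built from the (inductively $C^1$) iterate, so one \emph{must} integrate by parts along the characteristics; after that, only first derivatives of $\alpha_i$ and $\beta_i$ appear and the derivative estimate closes with constants depending on $\|U\|_{C^1}$ and on $\|\nabla\widetilde w^{(n)}\|$, with no $\nabla m$. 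With that repair your induction and Gronwall-in-$n$ argument goes through. A secondary, non-fatal difference: for the quantitative bound \eqref{2} the paper does not differentiate the characteristic formulas at all, but instead writes the system \eqref{eq16} satisfied by $(y,z)=(\partial_t w,\partial_r w)$ in the original variables (including the boundary conditions for $z$ recovered from the equation, which is where $\|f(0)\|$ enters through $y|_{t=0}=f(0,\cdot)-B\partial_r w^0$) and invokes Proposition 2.2.3 of Corli; your route of estimating traces and reflection times is workable but heavier, and the compatibility/corner bookkeeping you flag as delicate is indeed what that cited result packages up.
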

\begin{proof}
 we have proved $w\in C^0$,under the present assumptions we see that data $U, \tilde{f}, \tilde{w}^0 $ entering in \eqref{eq13} are continuous differentiable functions. while $m$ is barely continuous, However  in view of the particular form of $m$, we can apply the corollary \ref{cor1} and deduce that $\tilde{w}^{(n)}$ is continuously differentiable for each $n$.

the next step consists in proving that  the sequence $\{\nabla\tilde{w}^{(n)}\}$ is bounded in $L^{\infty}$ and then the sequence $\{\tilde{w}^{(n)}\}$ is equicontinuous. By induction on n that the sequence $\{\nabla\tilde{w}^{(n)}\}$ is bounded in $L^{\infty}$, using the moduli of continuity of these functions, we can get the equicontinuity of $\{\nabla\tilde{w}^{(n)}\}$.
Then Ascoli's theorem applies and existence of a $C^1$ solution $w$ to \eqref{eq14} is proved.

Then we  prove the estimates. Let $w= (w^I,w_3)^T, w^I = (w_1,w_2)^T$,
we define
$ (y,z)=(\partial_t w,\partial_r w)$, then $(y,z )$ is weak solution to
\begin{equation}\label{eq16}
\begin{cases}
\partial_t y +B\partial_r y + (\partial_t B)z =\partial_t f,\\
\partial_t z +B\partial_r z +(\partial_rB)z=\partial_r f,\\
[y^I]=0,\quad [z^I]=(B^I)^{-1} [f^I],\ \ \ \ \ r=b,\\
y_2=0,\quad z_2=\frac{1}{B_{12}}\{f_1 -y_1\} ,\quad r=a,\\
y|_{t=0}=f(0,r)-B(U^0(r),b)\partial_r w^0(r),\\
z|_{t=0} = \partial_r w^0(r),
\end{cases}
\end{equation}
where $B^I$ defined by
\[
B=
\left(
\begin{array}{cc}
B^I &0\\
0 &\lambda_2(t,r,U,\varphi)\\
\end{array}
\right).
\]
using the  Proposition 2.2.3 in \cite{C},  \eqref{2} holds.

\end{proof}
\begin{remark}
Continuing this process, we can get the piecewise $C^k $ solution of the original problem \eqref{eq7}when the initial data in the same space, and the corresponding compatibility conditions holds.

\end{remark}

 \section{The problem and asymptotic expansions of solutions}

\subsection{The problem in the Lagrangian coordinates.}
As in \cite{LW}, it is convenient to transform the system \eqref{eq3} to that in  Lagrangian coordinates, by rewriting the conservation law of mass  $\eqref{eq3}_1$ as
$$ \partial_t (r\rho^{\epsilon}) +\partial_r (r\rho^{\epsilon} u^{\epsilon})= 0,$$
we then introduce the Lagrangian coordinates $(t,x)$ with $ x=x(t,r)$ satisfying
\begin{equation}\label{eq18}
\partial_r x^{\epsilon}(t,r)= r\rho^{\epsilon}(t,r), \ \ \ \ \  \partial_t x^{\epsilon}(t,r)= -r\rho^{\epsilon}(t,r)u^{\epsilon}(t,r).
\end{equation}
We know that the coordinates transformation from $(t,r)$ to $(t,x)$, is reversible provided that $\rho^{\epsilon} > 0$. Actually, we obtain that
\begin{equation}\label{eq19}
x^{\epsilon}(t,r)=\int_a^r y\rho^{\epsilon}(t,y)dy,
\end{equation}
or
\begin{equation}
x^{\epsilon}(t,r)=\eta(r)-\int_0^t r\rho^{\epsilon}(s,r)u^{\epsilon}(s,r)ds,
\end{equation}
where\begin{equation}\label{eq20}
\eta(r):= \int_a^r y\rho_0(y)dy, \ \ \ r\in [a,\infty).
\end{equation}
Conversely, for the transformation from $(t,x)$ to $(t,r)$, we have that  $ r=r^{\epsilon}(t,x)$ satisfies
\begin{equation}\label{eq21}
\partial_x r^{\epsilon}(t,x)=\frac{1}{r\rho^{\epsilon}(t,r)} =\frac{1}{r^{\epsilon}(t,x)\rho^{\epsilon}(t,r(t,x))}, \ \ \
\partial_t r^{\epsilon}(t,x)= u^{\epsilon}(t,r)=u^{\epsilon}(t,r^{\epsilon}(t,x)).
\end{equation}
Then, we obtain that
\begin{equation}
r =r^{\epsilon}(t,x)= r_0(x)+ \int_0^t\tilde{ u}^{\epsilon}(s,x)ds,
\end{equation}
where $\tilde{u}^{\epsilon}(t,x)=u^{\epsilon}(t,r^{\epsilon}(t,x)), r_0(x):=\eta^{-1}(r)$ with $\eta^{-1}(x)$ being the inverse function of  $ \eta(r) $ given in \eqref{eq20}, since $\eta(r) $ as a function of $ r\in[a,\infty)$ is invertible provided that $\rho_0(r)>0 $.\\
Note that $\partial_x r_0(x)= \frac{1}{r_0(x)\rho_0(r_0(x))}$, which implies that
\begin{equation}\label{eq22}
r_0(x)= \sqrt{a^2+\int_0^x \frac{2}{\rho_0(r_0(y))}dy}
\end{equation}
\\
Denote by$(\tilde{\rho}^{\epsilon},\tilde{u}^{\epsilon},\tilde{v}^{\epsilon})^{T}(t,x)=(\rho^{\epsilon}, u^{\epsilon}, v^{\epsilon})^{T}(t,r^{\epsilon}(t,x)),$ and $\tau^{\epsilon}\triangleq\frac{1}{\rho^{\epsilon}}.$
For simplicity, we shall drop the tildes of notation in the following calculations.
Then
\begin{equation}\label{eq23}
\begin{cases}
\partial_t \tau^{\epsilon} - r^{\epsilon}\partial_x u^{\epsilon}
- \frac{1}{r^{\epsilon}}\tau^{\epsilon} u^{\epsilon} =0,\\

\partial_t u^{\epsilon} - r^{\epsilon}(\frac{1}{\tau^{\epsilon}})^{1+\gamma} \partial_x \tau^{\epsilon} -\frac{1}{r^{\epsilon}}(v^{\epsilon})^2
-\epsilon(\lambda +2 \mu)\{ \frac{(r^{\epsilon})^2}{\tau^{\epsilon}}\partial^2_x u^{\epsilon}
-(\frac{r^{\epsilon}}{\tau^{\epsilon}})^2\partial_x \tau^{\epsilon} \partial_x u^{\epsilon}
+2\partial_x u^{\epsilon} - \frac{\tau^{\epsilon}}{(r^{\epsilon})^2}u^{\epsilon} \}=0,\\

\partial_t v^{\epsilon} +\frac{1}{r^{\epsilon}} u^{\epsilon}v^{\epsilon}
-\epsilon \mu \{ \frac{(r^{\epsilon})^2}{\tau^{\epsilon}}\partial^2_x v^{\epsilon}
- (\frac{r^{\epsilon}}{\tau^{\epsilon}})^2 \partial_x \tau^{\epsilon} \partial_x v^{\epsilon}
+ 2\partial_x v^{\epsilon} - \frac{\tau^{\epsilon}}{(r^{\epsilon})^2}v^{\epsilon} \}=0.
\end{cases}
\end{equation}

We know that $$U^{\epsilon}(t,x)=(\tau^{\epsilon},u^{\epsilon},v^{\epsilon})^{T}(t,x)$$
satisfies the following problem in the domain $\{(t,x):t,x > 0 \}$
\begin{equation}\label{eqZ2}
\begin{cases}
\begin{split}
\mathcal{L}(U^{\epsilon}):=& \partial_t U^{\epsilon} + A(U^{\epsilon})\partial_x U^{\epsilon} + Q_1(U^{\epsilon})(U^{\epsilon},U^{\epsilon})-\epsilon B(U^{\epsilon})\partial^2_x U^{\epsilon} \\
&+ \epsilon Q_2(U^{\epsilon})(\partial_x U^{\epsilon}, \partial_x U^{\epsilon})+ \epsilon Q_3(U^{\epsilon})(U^{\epsilon},U^{\epsilon}) -\epsilon V(\partial_x U^{\epsilon}, U^{\epsilon}) =0,
\end{split}\\
U^{\epsilon}(0,x)= U^{\epsilon}_0(x)\triangleq (\tau_0,u_0,v^{\epsilon}_0)^{T} (x),\\
U^{\epsilon}_{II}(t,0)= (0, v^0(t))^{T}.
\end{cases}
\end{equation}
with $U^{\epsilon}_{II}=(u^{\epsilon}, v^{\epsilon})^{T}. $

 Where
\[
A(U^{\epsilon})=
\left(
\begin{array}{ccc}
0 & -r^{\epsilon} & 0\\
-r^{\epsilon} (\frac{1}{\tau^{\epsilon}})^{\gamma+1} &0 & 0\\
0&0& 0\\
\end{array}
\right),
\]
\\
\[
B(U^{\epsilon})= \frac{(r^{\epsilon})^2}{\tau^{\epsilon}}\cdot
diag \{0,\lambda+2\mu, \mu\},
\]

$r^{\epsilon} = r^{\epsilon}(t,x)= r_0(x)+ \int_0^t u^{\epsilon}(s,x)ds,\ \ \ r_0(x)=\sqrt{a^2+ 2\int_0^x \tau_0(y)dy}.$

$Q_{i}(\cdot,\cdot), i=1,2,3$ are quadratic forms defined as follows:

\[
\begin{split}
 &Q_1(U^{\epsilon})(U^{\epsilon},W)=\frac{1}{r^{\epsilon}}\cdot\left(-\tau^{\epsilon} w_2, -v^{\epsilon} w_3, u^{\epsilon} w_3\right)^{T},\ \ \ \ W=(w_1, w_2, w_3)^{T},\\
 &Q_2(U^{\epsilon})(\partial_x U^{\epsilon},\partial_x W)=(0, (\lambda +2\mu)(\frac{r^{\epsilon}}{\tau^{\epsilon}})^2 \partial_x \tau^{\epsilon}\partial_x w_2, \mu (\frac{r^{\epsilon}}{\tau^{\epsilon}})^2 \partial_x \tau^{\epsilon} \partial_x w_3)^{T},\\
 &Q_3(U^{\epsilon})(U^{\epsilon},W)=\frac{1}{(r^{\epsilon})^2}\cdot (0,(\lambda+2\mu)\tau^{\epsilon} w_2, \mu\tau^{\epsilon} w_3 )^{T} ,\\
 &V(\partial_xU^{\epsilon},U^{\epsilon})=2\left(0,(\lambda +2\mu)\partial_x u^{\epsilon}, \mu \partial_x v^{\epsilon}\right)^T.
\end{split}
\]

It is known that the characteristics of the system \eqref{eqZ2} are
$$ \lambda_1=  -  c , \ \ \ \lambda_2 = 0,\ \ \ \ \lambda_3=  c $$
where $ c^2= (r^{\epsilon})^2 (\frac{1}{\tau^{\epsilon}})^{\gamma+1}$. \\
We have the following property of the initial data $U^{\epsilon}_0(x)$ :
\begin{align}
\lim_{\epsilon \rightarrow 0} U^{\epsilon}_0(x)=U_0(x),\\
u_0(0)=0, v_0(0)=v^0(0).\\
v^{\epsilon}(0,x)= v_0(x) + g_1(0,\frac{x-h}{\sqrt{\epsilon}}),
\end{align}
where
\[g_1(0,\zeta)=\begin{cases}
g(0,\zeta)-v_0(h+), \quad \zeta >0,\\
g(0,\zeta)-v_0(h-), \quad \zeta <0,
\end{cases}\]
$h \triangleq \int_a^b y\rho_0(y) dy $, and $g(0,\zeta) $ be smooth function satisfies $g(0,\zeta)= v_0(h\pm),\quad \pm\zeta> 1 .$
\begin{assumption}\label{as1}
There exist positive constants $\tilde{\rho}$, and $\bar{\rho}$, such that
\begin{align}
0< \tau_0(r)\leq \frac{1}{\tilde{\rho}}\triangleq \tilde{\tau} , \ \ \ \forall \ r \geq a; \\
\tau_0(r)\rightarrow \frac{1}{\bar{\rho}}\triangleq \bar{\tau},\ \ \ \  \text{as}\  r \rightarrow \infty.
\end{align}
\end{assumption}

\subsection{ Asymptotic expansions }

As in \cite{FR}
we know that in the characteristic case, the size of the boundary layer is  $\sqrt{\epsilon}$ , so for the solutions  to the system \eqref{eqZ2}, we take the following ansatz when $\epsilon$ goes to zero:
\begin{equation}\label{l1}
U^{\epsilon}(t,x)=\sum_{j\geq0}\epsilon^{\frac{j}{2}}
U^j \left(t,x,\frac{x}{\sqrt{\epsilon}},\frac{x-h}{\sqrt{\epsilon}}\right)
= \sum_{j\geq0}\epsilon^{\frac{j}{2}} U^j \left(t,x,\xi, \zeta\right).
\end{equation}
Let's explain the idea of how to determine the profiles $ U^j(t,x,\frac{x}{\sqrt{\epsilon}},\frac{x-h}{\sqrt{\epsilon}}):= U^j (t,x,\xi, \zeta) $ for each $j$ and the detailed calculations will be given in the following subsections. We plug the \eqref{l1} into the problem \eqref{eqZ2}, and then when $\epsilon$ tends to zero and the spatial variable $ x\geq x_0 $ for a fixed $x_0 > 0$, we let $ \xi,\zeta \rightarrow +\infty$ and \eqref{eqZ2} becomes a problem depending only on $(t,x)$  whose solution denoted by $U^{I,0}(t,x).$

Next ,when $ x=O(\sqrt{\epsilon})$ , $\epsilon \rightarrow 0$, and by using the expansion
$$ U^{I,0}(t,x)=U^{I,0}(t,\sqrt{\epsilon}\xi)=
\sum_{j\geq 0}\frac{(\sqrt{\epsilon}\xi)^j}{j!}\partial^j_x U^{I,0}(t,0).$$
\eqref{eqZ2} becomes a problem depending only on $(t,\xi)$ whose solution denoted by $U^{B,0}(t,\xi)$.

When $ x-h=O(\sqrt{\epsilon})$ , $\epsilon \rightarrow 0$,
and by using the expansion
$$ U^{I,0}(t,x)=U^{I,0}(t,h+\sqrt{\epsilon}\zeta)=\sum_{j\geq 0}\frac{(\sqrt{\epsilon}\zeta)^j}{j!}\partial^j_x U^{I,0}(t,h).$$
\eqref{eqZ2} becomes a problem depending only on $(t,\zeta)$ whose solution denoted by $U^{s,0}(t,\zeta)$.
In this way the leading order profile $U^0(t,x,\xi,\zeta)$ given in \eqref{l1} in the form of
$$U^0(t,x,\xi,\zeta)= U^{I,0}(t,x)+ U^{B,0}(t,\xi)+  U^{s,0}(t,\zeta),$$
with $U^{B,0}(t,\xi), U^{s,0}(t,\zeta)$ being fast decay in $\xi,\zeta$ , respectively. \\
Then from \eqref{eqZ2} and the problems of $(U^{I,0},U^{B,0}, U^{s,0})$, we derive the problem of
$$ V^{\epsilon}(t,x)=\epsilon^{-\frac{1}{2}} \left(U^{\epsilon}(t,x)-U^0(t,x,\frac{x}{\sqrt{\epsilon}},\frac{x-h}{\sqrt{\epsilon}})\right),$$
Studying the problem of $V^{\epsilon}(t,x)$ in the same way as above, one can deduce that
\begin{equation}\label{eq26}
U^1(t,x,\xi,\zeta)= U^{I,1}(t,x)+ U^{B,1}(t,\xi)+ U^{s,1}(t,\zeta),
\end{equation}
Continuing the above process, for $j\geq 2,$ we can obtain
\begin{equation}\label{eq27}
U^j(t,x,\xi,z)= U^{I,j}(t,x)+ U^{B,j}(t,\xi)+  U^{s,j}(t,\zeta),
\end{equation}
and the problems of $ U^{I,j},U^{B,j},U^{s,j} $ and with
$ U^{B,j}(t,\xi), U^{s,j}(t,\zeta)$
being rapidly decreasing when $\xi,\zeta \rightarrow \infty$ , respectively, provided all functions involved are smoothing enough.
So we have the following form for  \eqref{l1}:
\begin{equation}\label{eq28}
U^{\epsilon}(t,x)= \sum_{j\geq0}\epsilon^{\frac{j}{2}}\left( U^{I,j}(t,x) + U^{B,j}(t,\xi) +U^{s,j}(t,\zeta)
\right).
\end{equation}

From the boundary conditions given in \eqref{eqZ2}, we deduce that $(U^{I,j},U^{B,j},U^{s,j})$ satisfy
\begin{equation}\label{B1}
\begin{cases}
U_{II}^{I,0}(t,0) + U_{II}^{B,0}(t,0)= (0,v^0(t))^{T},\\
U_{II}^{I,j}(t,0) + U_{II}^{B,j}(t,0)=0,\  j\geq 1,\\
U^{I,j}(t,h+)+U^{s,j}(t,0+)=U^{I,j}(t,h-)+U^{s,j}(t,0-),\quad j\geq 0.
\end{cases}
\end{equation}

we also have the following expansion for the coordinate transformation function $r(t,x)$ :
\begin{equation}\label{eq30}
r^{\epsilon}(t,x)= \sum_{j\geq 0}\epsilon^{\frac{j}{2}}\left( r^{I,j}(t,x)+ r^{B,j}(t,\xi)+ r^{s,j}(t,\zeta) \right),
\end{equation}
where
\begin{equation}\label{eq31}
\begin{cases}
r^{I,0}(t,x)= r_0(x) +\int_0^t u^{I,0}(s,x)ds,\\
r^{I,j}(t,x)=\int_0^t u^{I,j}(s,x)ds, \ \ j\geq 1.
\end{cases}
\end{equation}
\begin{equation}\label{eq32}
\begin{cases}
r^{B,j}(t,\xi)=&\int_0^t u^{B,j}(s,\xi)ds, \ \ j\geq 1,\\
r^{s,j}(t,\zeta)=&\int_0^t u^{s,j}(s,\zeta)ds, \ \ j\geq 1.\\
\end{cases}
\end{equation}
\subsection{derivation of problems of profiles $\{U^{I,j}(t,x), U^{B,j}(t,\xi), U^{s,j}(t,\zeta)\}_{j\geq 0}$ }
Now we derive the problems for profiles $\{U^{I,j}, U^{B,j},  U^{s,j}\}_{j\geq 0}$. Before this, we introduce notations for all $k\geq 0$,
\begin{equation}\label{eq33}
\begin{split}
U_0^{B,k}(t,\xi)=& U^{B,k}(t,\xi) +
\sum_{0\leq j\leq k} \frac{\xi^j}{j!}\partial^j_x U^{I, k-j}(t,0),\\
U_0^{s,k}(t,\zeta)=& U^{s,k}(t,\zeta) +
\sum_{0\leq j\leq k} \frac{\zeta^j}{j!}\partial^j_x U^{I, k-j}(t,h),\\
\end{split}
\end{equation}
which later will be used frequently.

Plugging \eqref{eq28} into \eqref{eqZ2}, and using above expansions, we get
\begin{equation}\label{eq35}
\mathcal{L}(U)= \sum_{k\geq -1}\epsilon^{\frac{k}{2}}\mathcal{F}_k\left( t,x,\frac{x}{\sqrt{\epsilon}}, \frac{x-h}{\sqrt{\epsilon}}\right),
\end{equation}
where
$\mathcal{F}_k( t,x,\frac{x}{\sqrt{\epsilon}},\frac{x-h}{\sqrt{\epsilon}})=
\bar{\mathcal{F}}_k( t,x) +\tilde{\mathcal{F}}_k(t,\frac{x}{\sqrt{\epsilon}}) +\tilde{\mathcal{E}}_k(t,\frac{x-h}{\sqrt{\epsilon}} )$.
By a direct calculation, we obtain each term in \eqref{eq35}. More precisely,
for $k=-1$,
\begin{align}
\bar{\mathcal{F}}_{-1}=&0,\nonumber \\
\tilde{\mathcal{F}}_{-1}=&A(U_0^{B,0})\partial_{\xi} U^{B,0},\label{eq37}
\end{align}
The terms of the expansion in \eqref{eq35} for $k=0$ are
\begin{align}
\bar{\mathcal{F}}_0=&\partial_t U^{I,0}+A(U^{I,0})\partial_x U^{I,0}+  Q_1(U^{I,0})(U^{I,0},U^{I,0}),\label{eq39}\\
\begin{split}
\tilde{\mathcal{F}}_0=&\partial_t U^{B,0}+ A(U_0^{B,0})\partial_{\xi} U^{B,1}+ dA(U_0^{B,0})\cdot U_0^{B,1}\partial_{\xi} U^{B,0}
-B(U_0^{B,0})\partial^2_{\xi} U^{B,0}\\
&- Q_2(U_0^{B,0})(\partial_{\xi} U^{B,0},\partial_{\xi} U^{B,0}) + Q_1(U_0^{B,0})(U_0^{B,0},U_0^{B,0})-\tilde{G}_0,\label{eq40}
\end{split}
\end{align}
where $\tilde{G}_0= \tilde{g}(\overline{U^{I,0}})$ for a smooth function $\tilde{g}$
 satisfying $ \tilde{g}(0)=0$.

 Similarly, for the terms in \eqref{eq35} with $k\geq 1$, we have
\begin{align}
\begin{split}
\bar{\mathcal{F}}_k=&\partial_t U^{I,k}
+ A(U^{I,0})\partial_x U^{I,k}
+ Q_1(U^{I,0})(U^{I,k},U^{I,0})
+Q_1(U^{I,0})(U^{I,0},U^{I,k}) \\
&+\bar{\Lambda}(U^{I,0})\cdot U^{I,k} + \bar{G}_k,\label{eq42}
\end{split}\\
\begin{split}
\tilde{\mathcal{F}}_k=&\partial_t U^{B,k}
+ A(U_0^{B,0})\partial_{\xi} U^{B,k+1}
-B(U_0^{B,0})\partial^2_{\xi} U^{B,k}
- Q_2(U_0^{B,0})(\partial_{\xi} U^{B,k},\partial_{\xi} U^{B,0})\\
&- Q_2(U_0^{B,0})(\partial_{\xi} U^{B,0},\partial_{\xi} U^{B,k})
+ Q_1(U_0^{B,0})(U_0^{B,k},U_0^{B,0})\\
&+ Q_1(U_0^{B,0})(U_0^{B,0},U_0^{B,k})
+\tilde{\Lambda}(U_0^{B,0})\cdot(U_0^{B,k}) +\tilde{G}_k,\label{eq43}
\end{split}
\end{align}
where
\begin{align*}
\bar{\Lambda}(U^{I,0})\cdot W =& dA(U^{I,0})\cdot W \partial_x U^{I,0}+ dQ_1(U^{I,0})\cdot W (U^{I,0},U^{I,0})  \\
\begin{split}
\tilde{\Lambda}(U_0^{B,0})\cdot W =& dA(U_0^{B,0})\cdot W \partial_{\xi} U^{B,1}
-dB(U_0^{B,0})\cdot W\partial^2_{\xi} U^{B,0}\\
&-(dQ_2(U_0^{B,0})\cdot W) (\partial_{\xi} U^{B,0},\partial_{\xi} U^{B,0})
+(dQ_1(U_0^{B,0})\cdot W) (U_0^{B,0},U_0^{B,0})
\end{split}
\end{align*}
and
$$ \bar{G}_k \ \text{is a function of } \{U^{I,j},\partial_x U^{I,j}\}_{j\leq k-1},$$
$$\tilde{G}_k\ \text{is a function of } \{ \{\overline{\partial^i_x U^{I,j}},i\leq k-j\}_{0\leq j\leq k},\{U^{B,j},\partial_{\xi} U^{B,j}\}_{j\leq k-1} \},$$
and $\tilde{\mathcal{E}}_k$ similarly with $\tilde{\mathcal{F}}_k$.

Now we derive the problem of profiles $\{U^{I,j}, U^{B,j},U^{s,j}\}_{j\geq 0}$,\ \ by discussing the equations $\bar{\mathcal{F}}_k =\tilde{\mathcal{F}}_k  =\tilde{\mathcal{E}}_k= 0$ for each $k\geq-1.$\\
\textbf{Problems of the leading order profiles}.
Discussion of the equation $\tilde{\mathcal{F}}_{-1}=0$. From the equation $\tilde{\mathcal{F}}_{-1}=0$, we obtain
\begin{equation}\label{eq48}
\begin{cases}
-r_0^{B,0}\partial_{\xi}u^{B,0}=0\\
-(\frac{1}{\tau_0^{B,0}})^{\gamma+1}r_0^{B,0}\partial_{\xi}\tau^{B,0}=0
\end{cases}
\end{equation}
which imply $(\tau^{B,0},u^{B,0})(t,\xi)\equiv 0,$
then we know that the leading term of boundary layers of the radial velocity and the pressure doesn't appear.

Similarly, from the equation $\tilde{\mathcal{E}}_{-1}=0$,
we obtain $(\tau^{s,0},u^{s,0})(t,\zeta)\equiv 0.$\\
Discussion of the equation $\tilde{\mathcal{F}}_0=0$, combining \eqref{eq48} with the boundary conditions \eqref{B1} yields
\begin{align}\label{eq49}
u^{I,0}(t,0)=0.
\end{align}
Then from$\bar{\mathcal{F}}_0=0$, it follows that $U^{I,0}(t,x)$ satisfies the following problem
\begin{equation}\label{eq50}
\begin{cases}
\partial_t U^{I,0}+ A(U^{I,0})\partial_x U^{I,0} + Q_1(U^{I,0})(U^{I,0},U^{I,0})=0, \ \ \ \ t,x > 0,x\neq h, \\
u^{I,0}(t,0)=0,\\
u^{I,0}(t,h-)=u^{I,0}(t,h+),\quad \tau^{I,0}(t,h-)= \tau^{I,0}(t,h+),
\end{cases}
\end{equation}
We endow the problem of $U^{I,0}$ with the initial data $U_0(x)$, that is,
\begin{align}\label{eq52}
U^{I,0}(0,x)=U_0(x)
\end{align}
and the zeroth compatibility condition holds.
$$\varphi(t)= b+ \int_0^t u^{I,0}(s,h)ds.$$
Thus, we know that the leading term $U^{I,0}$ of outer flow satisfies the compressible Euler equations in Lagrangian coordinates with the radial velocity vanishing on the boundary.

Discussion the equation $\tilde{\mathcal{F}}_0=0$. From the expansion of $r(t,x)$ and noting that $ u^{B,0}(t,\xi)=u_0^{B,0}(t,\xi)= 0$ we get
$$r_0^{B,0}(t,\xi)= a.$$
Then by using \eqref{eq48} and \eqref{eq50}, the equation $\tilde{\mathcal{F}}_0=0$
can be simplified as
\begin{equation}\label{eq53}
\begin{cases}
-a\partial_{\xi}u^{B,1}=0,\\
-(\frac{1}{\overline{\tau^{I,0}}})^{\gamma+1}\partial_{\xi}\tau^{B,1}
-\frac{1}{a^2}(v^{B,0}+ 2\overline{v^{I,0}})v^{B,0} = 0,\\
\partial_t v^{B,0}-a^2\mu\frac{1}{\overline{\tau^{I,0}}}\partial^2_{\xi} v^{B,0}= 0,
\end{cases}
\end{equation}
we get
\begin{align}\label{eq54}
u^{B,1}(t,\xi)\equiv 0.
\end{align}
Then the profile of $v^{B,0}(t,\xi)$:
\begin{align}\label{eq55}
\partial_t v^{B,0}=a^2\mu\frac{1}{\overline{\tau^{I,0}}}\partial^2_{\xi} v^{B,0}.
\end{align}
The boundary conditions of $v^{B,0}$ as follows:
\begin{align}\label{eq56}
v^{B,0}(t,0)=v^0(t)-\overline{v^{I,0}}(t).
\end{align}
Here we endow the problem of $v^{B,0}$ with zero initial data,i.e.,
\begin{align}\label{eq57}
v^{B,0}(0,\xi)=0,
\end{align}
so that the zeroth compatibility conditions holds:
$$ v^{B,0}(0,0)=v^0(0)-\overline{v^{I,0}}(0)=v^0(0)-v_0(0)=0.$$
Thus we get the boundary layer profile $v^{B,0}(t,\xi)$ satisfies the initial boundary value problem.
\begin{equation}\label{eq58}
\begin{cases}
\partial_t v^{B,0}=a^2\mu\frac{1}{\overline{\rho^{I,0}}}\partial^2_{\xi} v^{B,0},\quad t,\xi>0,\\
v^{B,0}(t,0)=v^0(t)-\overline{v^{I,0}}(t),\\
v^{B,0}(0,\xi)=0.
\end{cases}
\end{equation}

From the equation $\tilde{\mathcal{E}}_0 =0$
\begin{equation}\label{eq59}
\begin{cases}
r^{s,0}_0 \partial_t (\tau^{s,0}_0) -(r^{s,0}_0 )^2 \partial_{\zeta}(u^{s,1}_0) -\tau^{s,0}_0 u^{s,0}_0=0,\\
r^{s,0}_0 \partial_t (u^{s,0}_0) + \frac{(r^{s,0}_0 )^2}{(\tau^{s,0}_0)^{\gamma+1}} \partial_{\zeta}(\tau^{s,1}_0) -(v^{s,0}_0)^2=0,\\
r^{s,0}_0 \partial_t (v^{s,0}_0) -\mu \frac{(r^{s,0}_0)^3}{ \tau^{s,0}_0} \partial^2_{\zeta}(v^{s,0}_0) +  u^{s,0}_0 v^{s,0}_0 =0,
\end{cases}
\end{equation}
then from equation $\eqref{eq50}_1$ and $\eqref{eq59}_1$, 
 we obtain $u^{s,1}\equiv 0 $,
and $v^{s,0}$ satisfies
\begin{equation}\label{vs1}
\begin{cases}
r^{s,0}_0 \partial_t (v^{s,0}_0) -\mu \frac{(r^{s,0}_0)^3}{ \tau^{s,0}_0} \partial^2_{\zeta}(v^{s,0}_0) +  u^{s,0}_0 v^{s,0}_0 =0,\\
[ v^{I,0}+ v^{s,0}]=0, \quad \zeta=0,\\
v_0^{s,0}(0,\zeta)=g(0,\zeta),
\end{cases}
\end{equation}
where $g(0,\zeta)$ be smooth function and satisfies $ g(0,\zeta)= v_0(h\pm),\pm\zeta> 1$.\\

\textbf{Problems of the O($\epsilon^{\frac{1}{2}}$)-order profiles}.
We study the equation $\bar{\mathcal{F}}_1=0$. As we already have determined $u^{B,1}(t,\xi)\equiv 0$, then using \eqref{B1} we have the boundary condition of $u^{I,1}$,
\begin{align}\label{eq62}
u^{I,1}(t,0)= -u^{B,1}(t,0)=0.
\end{align}

Thus from $ \bar{\mathcal{F}}_1=0 $  we know that $U^{I,1}(t,x)$ satisfies the following linear problem in the domain
$\{(t,x):t,x>0, x\neq h\}$:
\begin{equation}\label{eq63}
\begin{cases}
\begin{split}
\partial_t U^{I,1}&
+ A(U^{I,0})\partial_x U^{I,1} + dA(U^{I,0})\cdot U^{I,1} \partial_x U^{I,0}
+ Q_1(U^{I,0})(U^{I,1},U^{I,0})\\
&+Q_1(U^{I,0})(U^{I,0},U^{I,1})
+dQ_1(U^{I,0})\cdot U^{I,1} (U^{I,0},U^{I,0})=0,
\end{split}\\
u^{I,1}(t,h-)=u^{I,1}(t,h+),\quad \tau^{I,1}(t,h-)= \tau^{I,1}(t,h+),\\
u^{I,1}(t,0)=0,\\
U^{I,1}(0,x)=0.
\end{cases}
\end{equation}

From $\tilde{\mathcal{F}}_1 =0,$ we get that $\tau^{B,1}(t,\xi),v^{B,1}(t,\xi),u^{B,2}(t,\xi)$ satisfy
\begin{equation}\label{eq61}
\begin{cases}
\partial_{\xi}\tau^{B,1}=
-\frac{({\overline{\tau^{I,0}}})^{\gamma+1}}{a^2}(v^{B,0}+ 2\overline{v^{I,0}})v^{B,0} ,\\
\partial_t \tau^{B,1} - \partial_{\xi} u^{B,2} = 0,\\
\partial_t v^{B,1} - \mu\frac{a^2}{\overline{\tau^{I,0}}}\partial^2_{\xi}v^{B,1}=f_1,\\
v^{B,1}(0,\xi)=0, \quad v^{B,1}(t,0)=-\overline{v^{I,1}}(t),
\end{cases}
\end{equation}
where
\begin{align*}
\begin{split}
f_1=-a^2 \mu \frac{1}{(\overline{\tau^{I,0}})^2}\partial_{\xi}v^{B,0}\partial_{\xi}\tau_0^{B,1}
+\mu \{\frac{2a}{\overline{\tau^{I,0}}}r_0^{B,1} -\frac{a^2}{(\overline{\tau^{I,0}})^2}\tau_0^{B,1}\}\partial^2_{\xi}v^{B,0}
+2\mu \partial_{\xi}v^{B,0} - \frac{\xi}{a}\overline{\partial_x u^{I,0}}v^{B,0},
\end{split}
\end{align*}
and $r^{B,1}_0= \xi \cdot\frac{\tau_0(0)}{a} + \int_0^t u_0^{B,1}(s,\xi)ds,$

similarly
\begin{equation}
\begin{cases}
r^{s,0}_0 \partial_t (u^{s,0}_0) - \frac{(r^{s,0}_0 )^2}{(\tau^{s,0}_0)^{\gamma+1}} \partial_{\zeta}(\tau^{s,1}_0) -(v^{s,0}_0)^2=0,\\
\begin{split}
\partial_t(\tau_0^{s,1}) - r^{s,0}_0 \partial_{\zeta}(u_0^{s,2})
-\frac{1}{r^{s,0}_0 }(\tau_0^{s,1} u^{s,0}_0 + u^{s,1}_0 \tau^{s,0}_0 )
+\frac{r^{s,1}_0 }{(r^{s,0}_0)^2 }\tau^{s,0}_0 u^{s,0}_0
- r^{s,1}_0 \partial_{\zeta}(u_0^{s,1})=0,\\
\end{split}\\
\begin{split}
\partial_t (v^{s,1}_0) &-\mu \frac{(r^{s,0}_0)^2}{\tau^{s,0}_0} \partial^2_{\zeta} (v_0^{s,1}) + \frac{1}{r^{s,0}_0}(u^{s,0}_0 v^{s,1}_0+ v^{s,0}_0 u^{s,1}_0) -\frac{r^{s,1}_0}{(r^{s,0}_0)^2}v^{s,0}_0 u^{s,0}_0\\
&=\mu \{2\frac{ r^{s,0}_0}{\tau^{s,0}_0} r^{s,1}_0 - (\frac{r_0^{s,0}}{\tau_0^{s,0}})^2 \tau^{s,1}_0 \} \partial^2_{\zeta} (v^{s,0}_0)
+ \mu(\frac{r^{s,0}_0}{\tau_0^{s,0}})^2 \partial_{\zeta} (v^{s,0}_0) \partial_{\zeta}(\tau_0^{s,1})+ 2\mu\partial_{\zeta} (v_0^{s.0}),
\end{split}\\
v^{s,1}(0,\zeta)=0,
\quad v^{s,1}(t,0+)-v^{s,1}(t,0-)= v^{I,1}(t,h-)-v^{I,1}(t,h+),
\end{cases}
\end{equation}

\textbf{Problems of the other order profiles}.
Continuing above process to study the equations $\bar{\mathcal{F}}_k,\tilde{\mathcal{E}}_k,\tilde{\mathcal{F}}_k =0, k\geq 2$.
For all $k\geq 2$, $U^{I,k}(t,x)$ are solutions to linearized Euler equations similar to \eqref{eq50} with the boundary condition $u^{I,k}|_{x=0}=-u^{B,k}(t,0)$, and $U^{B,k}(t,\xi),U^{s,k}(t,\zeta)$ satisfy the linearized parabolic problems similar to that $U^{B,1},U^{s,1}$, respectively.
\\
\subsection{Conclusion}
As mentioned above, we conclude are follows:
\\
\textbf{Conclusion}
The solution $ U^{\epsilon} =(\tau^{\epsilon},u^{\epsilon},v^{\epsilon})^{T} $ to the problem \eqref{eqZ2} formally admits the following asymptotic expansion:
\begin{align}\label{eq68}
U^{\epsilon}(t,x)= \sum_{j\geq 0}\epsilon^{\frac{j}{2}}\left( U^{I,j}(t,x)+ U^{B,j}(t,\frac{x}{\sqrt{\epsilon}})+U^{s,j}(t,\frac{x-h}{\sqrt{\epsilon}})\right)
\end{align}
for rapidly decaying $\{U^{B,j}(t,\xi),U^{s,j}(t,\zeta)\}_{j\geq 0}$ in $\xi ,\zeta \rightarrow\infty$,respectively, where for $j\geq 0$,
$$U^{I,j}=(\tau^{I,j},u^{I,j},v^{I,j})^{T}, \ \ U^{B,j}(t,\xi)=(\tau^{B,j},u^{B,j},v^{B,j})^{T},\ \ U^{s,j}=(\tau^{s,j},u^{s,j},v^{s,j})^{T}.$$
And we have the following properties:
\begin{enumerate}
\item[(i)] $U^{I,0}(t,x)$ satisfies the following initial boundary value problem for the compressible Euler equations in $\{(t,x): t>0,x>0,x\neq h\}$:
    \begin{equation}\label{eq69}
  \begin{cases}
  \partial_t U^{I,0}+ A(U^{I,0})\partial_x U^{I,0} + Q_1(U^{I,0})(U^{I,0},U^{I,0})=0,  \\
  u^{I,0}(t,h-)=u^{I,0}(t,h+),\quad \tau^{I,0}(t,h-)= \tau^{I,0}(t,h+),\\
  u^{I,0}(t,0)=0,\\
  U^{I,0}(0,x)=U_0(x);\\
  \end{cases}
  \end{equation}
  and $U^{I,1}(t,x)$ satisfies the following problem for the linearized compressible Euler equations in $\{(t,x): t>0,x>0,x\neq h\}$:
  \begin{equation}\label{eq70}
  \begin{cases}
  \begin{split}
  \partial_t U^{I,1}&
  + A(U^{I,0})\partial_x U^{I,1} + dA(U^{I,0})\cdot U^{I,1} \partial_x U^{I,0}
  + Q_1(U^{I,0})(U^{I,1},U^{I,0})\\
  &+Q_1(U^{I,0})(U^{I,0},U^{I,1})
  +dQ_1(U^{I,0})\cdot U^{I,1} (U^{I,0},U^{I,0})=0,
  \end{split}\\
  u^{I,1}(t,h-)=u^{I,1}(t,h+),\quad \tau^{I,1}(t,h-)= \tau^{I,1}(t,h+)+\phi^{1}(t),\\
  u^{I,1}(t,0)=-u^{B,1}(t,0)=0,\\
  U^{I,1}(0,x)=0;\\
  \end{cases}
  \end{equation}
For all $j\geq 2,U^{I,j}(t,x)$ are solutions to the linear problems similar to \eqref{eq70} with the boundary condition $u^{I,j}(t,0)=-u^{B,j}(t,0)$;

\item[(ii)] The leading boundary layer profiles $U^{B,0}(t,\xi)=(\tau^{B,0} ,u^{B,0}, v^{B,0})^{T}(t,\xi)$ satisfies that
  \begin{align}\label{eq71}
  (\tau^{B,0},  u^{B,0})(t,\xi)\equiv 0,
  \end{align}
  and the following boundary value problem of nonlinear parabolic equations:
  \begin{equation}\label{eq72}
  \begin{cases}
  \partial_t v^{B,0}=\frac{a^2\mu}{\overline{\tau^{I,0}}}\partial^2_{\xi} v^{B,0}, \ \quad \  t,\  \xi>0,\\
  v^{B,0}(t,0)=v^0(t)-\overline{v^{I,0}}(t),\\
  v^{B,0}(0,\xi)=0;
  \end{cases}
  \end{equation}
  For the next order profile $U^{B,1}(t,\xi)=(\tau^{B,1}, u^{B,1}, v^{B,1})^{T}(t,\xi)$.\\
  $ u^{B,1}(t,\xi)\equiv0 $  and $ v^{B,1}(t,\xi)$ satisfies the following linearized problem of \eqref{eq72}:
  \begin{align}
  \begin{cases}
  \begin{split}
  &\partial_t v^{B,1} - a^2\mu\frac{1}{\overline{\tau^{I,0}}}\partial^2_{\xi}v^{B,1}=f_1,
  \end{split}\\
  &v^{B,1}(t,0)=-\overline{v^{I,1}}(t),\\
  &v^{B,1}(0,\xi)=0,
  \end{cases}
  \end{align}
  where $f_1$ given as before,  and $\tau^{B,1}(t,\xi)$ is given by
  $$\partial_{\xi}\tau^{B,1}=
-\frac{({\overline{\tau^{I,0}}})^{\gamma+1}}{a^2}(v^{B,0}+ 2\overline{v^{I,0}})v^{B,0} .$$
  For each $j\geq 2$, the profiles $ U^{B,j}(t,\xi)=(\tau^{B,j},u^{B,j},v^{B,j})^{T}(t,\xi)$  satisfy linear problems similar to $U^{B,1}(t,\xi)$;
\item[(iii)] the leading vortex layer profile $U^{s,0}(t,\zeta)=(\tau^{s,0} ,u^{s,0}, v^{s,0})^{T}(t,\zeta)$ satisfies that
  $$(\tau^{s,0},  u^{s,0})(t,\zeta)\equiv 0,$$
  and the following boundary value problem of nonlinear parabolic equations:
  \begin{equation}
  \begin{cases}
  \partial_t (v_0^{s,0}) - \mu \frac{(r_0^{s,0})^2}{\tau_0^{s,0}}\partial^2_{\zeta}(v_0^{s,0})
  + \frac{1}{r_0^{s,0}}u_0^{s,0} v_0^{s,0}=0,\quad t>0, \zeta\neq 0,\\
  v^{s,0}(t,0+)-v^{s,0}(t,0-)= v^{I,0}(t,h+)-v^{I,0}(t,h-),\\
  v_0^{s,0}(0,\zeta)= g(0,\zeta),
  \end{cases}
  \end{equation}
  where $g(0,\zeta)$ be smooth function and satisfies $g(0,\zeta)= v^{I,0}(0,h\pm), |\zeta|>1.$

  For the next order profile $U^{s,1}(t,\zeta)=(\tau^{s,1}, u^{s,1}, v^{s,1})^{T}(t,\zeta)$.
  $ u^{s,1}(t,\zeta)\equiv0 $  and $ v^{s,1}(t,\zeta)$ satisfies the following linearized problem:
  \begin{equation}\label{eqs1}
  \begin{cases}
  \begin{split}
  \partial_t (v^{s,1}_0) &-\mu \frac{(r^{s,0}_0)^2}{\tau^{s,0}_0} \partial^2_{\zeta} (v_0^{s,1}) + \frac{1}{r^{s,0}_0}(u^{s,0}_0 v^{s,1}_0+ v^{s,0}_0 u^{s,1}_0) -\frac{r^{s,1}_0}{(r^{s,0}_0)^2}v^{s,0}_0 u^{s,0}_0=f_3
  \end{split}\\
  v^{s,1}(t,0+)-v^{s,1}(t,0-)= v^{I,1}(t,h-)-v^{I,1}(t,h+),\\
  v^{s,1}(0,\zeta)=0,\\
  \end{cases}
  \end{equation}
  where
  \begin{align*}
   f_3= \mu \{2\frac{ r^{s,0}_0}{\tau^{s,0}_0} r^{s,1}_0 - (\frac{r_0^{s,0}}{\tau_0^{s,0}})^2 \tau^{s,1}_0 \} \partial^2_{\zeta} (v^{s,0}_0)
  + \mu(\frac{r^{s,0}_0}{\tau_0^{s,0}})^2 \partial_{\zeta} (v^{s,0}_0) \partial_{\zeta}(\tau_0^{s,1})+ 2\mu\partial_{\zeta} (v_0^{s.0}),
  \end{align*}
    and $\tau^{s,1}(t,\zeta)$ given by
  $$r^{s,0}_0 \partial_t (u^{s,0}_0) - \frac{(r^{s,0}_0 )^2}{(\tau^{s,0}_0)^{\gamma+1}} \partial_{\zeta}(\tau^{s,1}_0) -(v^{s,0}_0)^2=0.$$
  For each $j\geq 2$, the profiles $ U^{s,j}(t,\zeta)=(\tau^{s,j},u^{s,j},v^{s,j})^{T}(t,\zeta)$ satisfy linear problems similar to $U^{s,1}(t,\zeta)$.
\end{enumerate}

\section{study on the profiles $\{U^{I,j},U^{B,j},U^{s,j}\}_{j\geq 0}$}In this section, we study the well-posedness of problem of profiles $\{U^{I,j},U^{B,j},U^{s,j}\}_{j\geq 0}$ derived in section 3.

\subsection{well-posedness of the problem of $U^{I,0}$}
From the Conclusion, we know that that the leading term $U^{I,0}(t,x)= (\tau^{I,0} ,u^{I,0}, v^{I,0})^{T}(t,x)$ of outer flow satisfies the following nonlinear initial-boundary value problem in the domain $\{(t,x)| t,x > 0, x\neq h\}$:
\begin{equation}\label{eq73}
\begin{cases}
\partial_t U^{I,0}+ A(U^{I,0})\partial_x U^{I,0} + Q_1(U^{I,0})(U^{I,0},U^{I,0})=0,\\
u^{I,0}(t,h-)=u^{I,0}(t,h+),\quad \tau^{I,0}(t,h-)= \tau^{I,0}(t,h+),\\
u^{I,0}(t,0)=0,\\
U^{I,0}(0,x)=U_0(x);
\end{cases}
\end{equation}

One can see that the equations in \eqref{eq73} are the compressible Euler equations in the Lagrangian coordinates.

To study the problem \eqref{eq73}, let's first consider the following nonlinear problem of $U(t,r)= (\tau, u, v)^{T}(t,r)$ in the Eulerian coordinates:
\begin{equation}\label{eq74}
\begin{cases}
\partial_t U + \tilde{A}(U)\partial_r U + \tilde{Q}_1(U,U)=0, \ \ \ \ t>0, r>a,r\neq \varphi(t),\\
[\tau]=[u]=0,   \quad \ \ r= \varphi(t),\\
u=0,\quad r=a,\\
U|_{t=0}:= U_0(r)= (\tau_0, u_0, v_0)^{T}(r),\\
\end{cases}
\end{equation}
where \\
\[
\tilde{A}(U)=
\left(
\begin{array}{ccc}
u & -\tau & 0\\
\\
-\tau^{-\gamma} &u & 0\\
\\
0&0& u\\
\end{array}
\right),
\]
$$ \tilde{Q}_1(U,U)=\frac{1}{r}\left(-\tau u, -v^2, uv\right)^{T}$$
It is not difficult to show the boundary conditions given in \eqref{eq74} is sufficient to solve this problem.

Thus,provided that the initial data $U_0(r)$ satisfies that
$$0<\tau_0(r) \leq \tilde{\tau},\ \ \ \ \forall r\geq a,$$
with positive constant $\tilde{\tau}, \bar{\tau}$ given as before,
$$U_{0,\pm}(r)-(\bar{\tau},0,0)^{T}\in H^{s+2}, \ \ \ \ s>\frac{3}{2},$$
and the compatibility conditions of \eqref{eq74} holding up to order $s$, we know that $U_{0,\pm}(r)\in C^s$,then from the section 2 we know the solution $U_{\pm}(t,r)\in C^s$. So we know the value of solution on free boundary, then straighten the free boundary, and using the symmetric hyperbolic theroy, there exists a $T_0>0 $ and a unique solution $U(t,r)$ to equation \eqref{eq74} such that,
$$ U_{\pm}-(\bar{\tau},0,0)^{T} \in\bigcap_{0\leq k\leq s} W^{k,\infty}(0,T_0;H^{s-k})$$
and $ \tau(t,r)>0.$ One can refer to \cite{AM} or\cite{MET}, for instance, for the proof of this result.

Now, we introduce the lagrangian coordinates $(t,x)$ in the problem \eqref{eq74} by the relation
\begin{align}\label{eq75}
r^{\epsilon}= r^{\epsilon}(t,x)= r_0(x) + \int_0^t \tilde{u}^{\epsilon}(s,x)ds
\end{align}
where $\tilde{u}^{\epsilon}(s,x)= u^{\epsilon}(t,r^{\epsilon}(t,x))$ and $ r_0(x)=\eta^{-1}(r) $ is the inverse function of
\begin{align}\label{eq76}
\eta(r):= \int_a^r y\rho_0(y)dy,  \ \ \  r\geq a.
\end{align}
By using the argument similar to that given at beginning of  section 3, we know that the transformation from $(t,r)$ to $(t,x)$ is reversible and $U(t,r(t,x))$ is a solution to \eqref{eq73}. Moreover, by combining \eqref{eq31} with \eqref{eq75},it follows that
\begin{equation}\label{eq77}
\begin{split}
&r(t,x)= r^{I,0}(t,x)= r_0(x) + \int_0^t u^{I,0}(s,x)ds,\\
&r_0(x)=\sqrt{a^2 +2 \int_0^x \tau_0(y)dy},\\
\\
&\partial_x r(t,x)=\partial_x r^{I,0}(t,x)= \frac{\tau^{I,0}(t,x)}{r^{I,0}(t,x)}.
\end{split}
\end{equation}
So we have the following result
\begin{proposition}\label{p4}
Let the initial data $U_0(x)= (\tau_0, u_0, v_0)^{T}(x) $ satisfy Assumption, and
$U_{0,\pm}(r)-(\bar{\tau},0,0)^{T}\in H^{s+2},$ with $\bar{\tau}$ being positive constant given before, and $s >\frac{3}{2}$ being an integer. Also, the compatibility conditions of \eqref{eq73} hold up to order $s$. Then there exists a $ T_0>0 $ and a unique solution $U^{I,0}(t,x)$ to \eqref{eq73} such that
\begin{align}\label{eq78}
U^{I,0}_{\pm}-(\bar{\tau},0,0)^{T} \in\bigcap_{0\leq k\leq s} W^{k,\infty}(0,T_0;H^{s-k})
\end{align}
Moreover, we have $\tau^{I,0}(t,x)> 0$.
\end{proposition}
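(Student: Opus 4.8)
The plan is to produce $U^{I,0}$ in two stages: first solve the Eulerian free boundary problem \eqref{eq74} for the compressible Euler system on a short time interval, and then pull the solution back to Lagrangian coordinates through \eqref{eq75}--\eqref{eq77}, checking at each stage that the coordinate change is invertible and that the regularity class is preserved.

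To solve \eqref{eq74}, I would first straighten the free boundary $r=\varphi(t)$ exactly as in Section 2, reducing it to a fixed-domain problem of the form \eqref{eq7}. The system is strictly hyperbolic with eigenvalues $u-c,\,u,\,u+c$, $c^2=\tau^{1-\gamma}>0$; on $r=a$ the single condition $u=0$ supplies the data for the lone characteristic entering the domain there, while on the free front, where $u_\pm=\varphi'$ so that $\lambda_2=u$ is tangential from both sides (the vortex sheet), the two conditions $[\tau]=[u]=0$ together with the kinematic relation $\varphi'=u_\pm$ supply exactly the data for the two acoustic characteristics entering the front (one from each side) plus the evolution of the front position --- this is the ``sufficiency'' asserted just before the proposition. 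Under the hypotheses $U_{0,\pm}-(\bar\tau,0,0)^T\in H^{s+2}$ with $s>\frac32$ an integer, the data are piecewise $C^s$ by Sobolev embedding, and since the compatibility conditions hold up to order $s$ the iteration scheme \eqref{eq17} converges: Propositions \ref{p1}--\ref{p3} together with the Remark following Proposition \ref{p3} give a unique piecewise $C^s$ solution on some $[0,T_0']$, so that the trace of $U$ on $r=\varphi(t)$ is determined. Straightening the front once more and running the standard $L^2$ energy method for the (symmetrized) hyperbolic system --- as in \cite{AM} or \cite{MET} --- upgrades this to $U_\pm-(\bar\tau,0,0)^T\in\bigcap_{0\le k\le s}W^{k,\infty}(0,T_0;H^{s-k})$ on some $0<T_0\le T_0'$, and $\tau>0$ persists on a (possibly shorter) interval by continuity, using that $\tau_0$ is bounded below on compacta and $\tau_0\to\bar\tau>0$ at infinity.

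Next I would pass to the Lagrangian coordinate $x=\eta(r)$ with $\eta$ as in \eqref{eq76}. Since $\rho_0=\tau_0^{-1}\ge\tilde\rho>0$ and $\rho_0-\bar\rho\in H^{s+2}$, the map $\eta:[a,\infty)\to[0,\infty)$ is a $C^{s+1}$ diffeomorphism (with $\eta(a)=0$, $\eta(b)=h$), so $r_0=\eta^{-1}$ has the regularity in \eqref{eq22}. Defining $r(t,x)$ by \eqref{eq75} and differentiating gives $\partial_x r=\tau/r>0$ and $\partial_t r=u$, hence $(t,r)\mapsto(t,x)$ is invertible as long as $\tau>0$, i.e. on $[0,T_0]$; composing with this change of variables preserves the class $\bigcap_{0\le k\le s}W^{k,\infty}(0,T_0;H^{s-k})$. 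The same computation that led from \eqref{eq3} to \eqref{eq23} then shows that $U^{I,0}(t,x):=U(t,r(t,x))$ solves the equations in \eqref{eq73}: the boundary $r=a$ becomes $x=0$ with $u^{I,0}(t,0)=0$, the front $r=\varphi(t)$ becomes the fixed line $x=h$ across which $[\tau^{I,0}]=[u^{I,0}]=0$, and $U^{I,0}(0,x)=U_0(x)$; finally $\tau^{I,0}(t,x)=\tau(t,r(t,x))>0$.

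The crux is the free boundary in \eqref{eq74}: it is a \emph{characteristic} front (the vortex sheet, for which $m=\rho(u-\varphi')=0$), so the classical noncharacteristic hyperbolic IBVP theory does not apply directly and one must exploit the block structure of the coefficient matrix, namely the decoupling of the transported angular component $v$ (eigenvalue $\lambda_2$) from the acoustic pair carrying the jump relations $[\tau]=[u]=0$ --- which is precisely what the explicit representation \eqref{s1}--\eqref{s3} and Propositions \ref{p1}--\ref{p3} in Section 2 are designed to handle. A secondary point is to control, throughout the iteration, the nonlinear coupling between the unknown front $\varphi(t)=b+\int_0^t u_{-,2}(s,b)\,ds$ and the solution, and to verify that neither of the two successive changes of variables (straightening the front, then passing to Lagrangian coordinates) loses regularity.
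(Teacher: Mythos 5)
Your proposal follows essentially the same route as the paper: solve the Eulerian free-boundary problem \eqref{eq74} by combining the piecewise $C^k$ vortex-sheet existence theory of Section~2 (Propositions~\ref{p1}--\ref{p3} and the subsequent Remark) with the symmetrizable-hyperbolic energy theory of \cite{AM}, \cite{MET} after straightening the front, and then transfer the solution to Lagrangian coordinates via \eqref{eq75}--\eqref{eq77}. Your added details --- the explicit counting of incoming characteristics at $r=a$ and at the characteristic front, and the verification that $\eta$ is a diffeomorphism --- only make explicit what the paper asserts without proof, so the argument is correct and matches the paper's.
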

\subsection{study on the profile $U^{B,0}$}
As in Conclusion,the key point of determining the profile $U^{B,0}$ is to study the equation \eqref{eq72}.

The idea is similarly with \cite{LW}, so we sketch the idea
We first construct an auxiliary function to homogenize the boundary conditions. More precisely,
let$ f(t,\xi)$ be smooth and satisfies $f(0,\xi)=0, \xi> 0,$
$$f(t,0)= v^0(t)-\overline{v^{I,0}}(t) \quad t>0,$$
and $ f(t,\xi)=0, \quad t>0, \quad \xi\geq 1$.\\
Setting $$v(t,\xi)= v^{B,0}(t,\xi)-f(t,\xi),$$
\\
we know that $v(t,\xi)$ satisfies the following problem in $\{ t>0,\xi>0\}:$
\begin{equation}\label{ws1}
\begin{cases}
\partial_t v-a^2 \mu \frac{1}{\overline{\tau^{I,0}}}\partial^2_{\xi}(v+f) =-\partial_t f \triangleq \tilde{g},\\
v(t,0)=0,\\
v(0,\xi)=0,
\end{cases}
\end{equation}
since $ \overline{v^{I,0}}(t)\in H^{s-1} $, we know $\tilde{g}\in H^{s-2}((0,T_0)\times R^+) .$
By using the results of $U^{I,0}$ given in proposition \ref{p4}, we know that there exists a positive constant $M_1$ such that
$$M_1^{-1} \leq \frac{a^2}{ \overline{\tau^{I,0}}}\leq M_1,\quad\partial_t( \frac{a^2} {\overline{\tau^{I,0}}})\leq M_1, \forall t \in[0,T_0],$$
and $\tilde{g}\in H^{s-2}(0,T_0)\times R^+).$ Moreover, for a positive constant $0<M_0 < \frac{M_1}{2},$ and for all $n \in N$,
$$ \|\langle\xi\rangle^n f\|_{W^{1,\infty}(0,T_0;H^2)}+ \|\langle\xi\rangle^n \tilde{g}\|^2_{L^{\infty}(0,T_0 ;L^2)} \leq M_0^2,$$
with $\langle\xi\rangle\triangleq \sqrt{1+ \xi^2}.$
\\
The main result of this section is as follows.
\begin{proposition}\label{p5}
For the problem \eqref{ws1}, there exists a $T_1: 0<T_1 \leq T_0$ and a unique solution $v(t,\xi)$ to \eqref{ws1} such that
$$ v\in W^{1,\infty}(0,T_1;H^1)\cap H^1(0,T_1;H^2),$$
and for all $ n \in N$ the estimate
\begin{equation}
\|\langle\xi\rangle^n v\|_{L^{\infty}(0,T_1;H^1)} +\|\langle\xi\rangle^n v\|_{L^2 (0,T_1;H^2)}
\leq M (\|\langle\xi\rangle^n f\|_{L^2 (0,T_1;H^2)} + \|\langle\xi\rangle^n \tilde{g}\|_{L^2 (0,T_1;L^2)} )
\end{equation}
 holds for a positive constant $M= M(T_1,n,M_1)$ .  Moreover if $ \tilde{g}\in H^{m}((0,T_1)\times R^+)$
and
$$\partial^k_t \tilde{g}(0,\xi)=0,\quad k=0,1,\ldots,m-1, \ \ \xi  \geq 0,$$
we get
$$ \langle\xi\rangle^n v(t,\xi)\in \bigcap_{k+[\frac{l}{2}]\leq m} (W^{k,\infty}(0,T_1;H^l)\cap H^k(0,T_1;H^{l+1})). $$
\end{proposition}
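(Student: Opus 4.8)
The plan is to regard \eqref{ws1} as a linear, uniformly parabolic initial–boundary value problem on the half–line $\{\xi>0\}$ with the time–dependent coefficient $a(t):=a^{2}\mu/\overline{\tau^{I,0}}(t)$ — which, by the bounds on $\overline{\tau^{I,0}}$ recalled above (consequences of Proposition~\ref{p4}), is bounded above and below by positive constants and Lipschitz in $t$ — and with the forcing $F(t,\xi):=a(t)\partial_\xi^{2}f+\tilde g$, so that $\|\langle\xi\rangle^{n}F\|_{L^{2}}\le C(M_{1})\big(\|\langle\xi\rangle^{n}f\|_{H^{2}}+\|\langle\xi\rangle^{n}\tilde g\|_{L^{2}}\big)$. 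I would then run the standard energy method but carry the weight $\langle\xi\rangle^{n}$ through every estimate, so that the spatial decay is produced simultaneously with the solution; this follows the scheme of \cite{LW}.

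For the a priori weighted estimates, multiplying $\eqref{ws1}_{1}$ by $\langle\xi\rangle^{2n}v$ and integrating over $\xi>0$, the boundary term at $\xi=0$ drops because $v(t,0)=0$, the elliptic term yields $a(t)\|\langle\xi\rangle^{n}\partial_\xi v\|_{L^{2}}^{2}$ plus a commutator $a(t)\int 2n\xi\langle\xi\rangle^{2n-2}v\,\partial_\xi v\,d\xi$ which, by Young's inequality and $|\xi|\le\langle\xi\rangle$, is dominated by a small multiple of $a(t)\|\langle\xi\rangle^{n}\partial_\xi v\|_{L^{2}}^{2}$ plus $C_{n}\|\langle\xi\rangle^{n}v\|_{L^{2}}^{2}$ and hence absorbed, while the right–hand side is $\le\|\langle\xi\rangle^{n}v\|_{L^{2}}\|\langle\xi\rangle^{n}F\|_{L^{2}}$; Grönwall then gives the $L^{\infty}(0,T_{1};L^{2})$ and $L^{2}(0,T_{1};H^{1})$ bounds for $\langle\xi\rangle^{n}v$. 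Multiplying $\eqref{ws1}_{1}$ instead by $\langle\xi\rangle^{2n}\partial_{t}v$ and integrating by parts produces $\tfrac12\tfrac{d}{dt}\big(a(t)\|\langle\xi\rangle^{n}\partial_\xi v\|_{L^{2}}^{2}\big)$, a harmless term $-\tfrac12 a'(t)\|\langle\xi\rangle^{n}\partial_\xi v\|_{L^{2}}^{2}$ (with $a'(t)$ bounded by the estimates on $\overline{\tau^{I,0}}$), plus weight commutators and a right–hand side absorbed into $\|\langle\xi\rangle^{n}\partial_{t}v\|_{L^{2}}^{2}$ with a small constant; this yields the $L^{\infty}(0,T_{1};H^{1})$ bound for $\langle\xi\rangle^{n}v$ and the $L^{2}(0,T_{1};L^{2})$ bound for $\langle\xi\rangle^{n}\partial_{t}v$, and reading $\partial_\xi^{2}v=a(t)^{-1}(\partial_{t}v-F)$ off $\eqref{ws1}_{1}$ upgrades this to $L^{2}(0,T_{1};H^{2})$, closing the displayed estimate.

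Existence I would obtain by a Galerkin scheme in $\xi$ built from smooth, compactly supported functions vanishing at $\xi=0$ (so all the weighted manipulations above are legitimate at the finite-dimensional level), deriving the same weighted bounds uniformly in the truncation and passing to the limit; uniqueness is the $n=0$ estimate applied to the difference of two solutions. For the higher-regularity statement I would differentiate $\eqref{ws1}_{1}$ in $t$: since $\tilde g=-\partial_{t}f$, the hypotheses $\partial_{t}^{j}\tilde g(0,\cdot)=0$, $j\le m-1$, force $\partial_{t}^{j}f(0,\cdot)=0$, $j\le m$, so evaluating the $k$-times differentiated equation at $t=0$ gives $\partial_{t}^{k}v(0,\cdot)=0$ for $k\le m-1$ and the compatibility needed to apply the base estimates to $\partial_{t}^{k}v$ holds; the equation for $\partial_{t}^{k}v$ has the same parabolic form with forcing $\partial_{t}^{k}F+\sum_{1\le j\le k}\binom{k}{j}(\partial_{t}^{j}a)\,\partial_\xi^{2}\partial_{t}^{k-j}v$. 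Applying the weighted estimates to each $\partial_{t}^{k}v$ and then repeatedly trading one time derivative for two space derivatives through $\partial_\xi^{2}(\partial_{t}^{k}v)=a(t)^{-1}\big(\partial_{t}^{k+1}v-\partial_{t}^{k}F-\cdots\big)$ yields exactly $\langle\xi\rangle^{n}v\in\bigcap_{k+[l/2]\le m}\big(W^{k,\infty}(0,T_{1};H^{l})\cap H^{k}(0,T_{1};H^{l+1})\big)$.

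The main obstacle will be the weighted bookkeeping: one must verify that each commutator produced by moving $\langle\xi\rangle^{2n}$ past $\partial_\xi^{2}$ (and, in the higher-order step, past $\partial_{t}^{k}$) is genuinely lower order and absorbable, with constants depending only on $n$, $T_{1}$ and $M_{1}$; and in the last part one must check that $a(t)$, hence the trace $\overline{\tau^{I,0}}$, is differentiable in $t$ up to the order dictated by $m$, which ties the admissible $m$ to the regularity exponent $s$ of Proposition~\ref{p4}. Everything else is routine linear parabolic theory.
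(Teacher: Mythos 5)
Your proposal is correct and follows essentially the same route as the paper: weighted $L^2$ and $H^1$ energy estimates with the weight $\langle\xi\rangle^{2n}$ carried through and the commutators absorbed, then time-differentiation of the equation to control $\partial_t^k v$, and finally repeated use of the equation $\partial_\xi^2 v = a(t)^{-1}(\partial_t v - F)$ to trade each time derivative for two spatial derivatives. The only (immaterial) differences are that you test the equation with $\langle\xi\rangle^{2n}\partial_t v$ where the paper uses $-\langle\xi\rangle^{2n}\partial_\xi^2 v$, and that you make the existence construction (Galerkin) explicit where the paper leaves it to standard linear parabolic theory.
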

\begin{proof}
(1) we are going to estimate the weighted $L^2 -$estimate of $v(t,\xi)$. Multiplying \eqref{ws1} by $\langle\xi\rangle^{2n} v, n \in N$
 and integrating over $\mathbf{R^+}$ with respect to $\xi$,we get
\begin{equation}\label{wes1}
\frac{1}{2}\frac{d}{dt}\|\langle\xi\rangle^n v\|^2
 + a^2\mu \frac{1}{\overline{\tau^{I,0}}} \int_0^{\infty} \partial_{\xi}(v+f) \cdot \partial_{\xi}(\langle\xi\rangle^{2n} v)d\xi
=\int_0^{\infty} \tilde{g}\cdot \langle\xi\rangle^{2n} v d\xi
\end{equation}
since
\begin{equation*}
\begin{split}
a^2\mu \frac{1}{\overline{\tau^{I,0}}}& \int_0^{\infty} \partial_{\xi}(v+f) \cdot \partial_{\xi}(\langle\xi\rangle^{2n} v)d\xi\\
&\geq \frac{\mu}{M_1}\|\langle\xi\rangle^n \partial_{\xi} v\|^2
-\mu M_1 ( ( \|\langle\xi\rangle^n v\|+\|\langle\xi\rangle^n f\|)\cdot \|\langle\xi\rangle^n \partial_{\xi} v\|
 +2n  \|\langle\xi\rangle^n v\|\cdot \|\langle\xi\rangle^n \partial_{\xi} v\|)\\
&\geq \frac{\mu}{2M_1}\|\langle\xi\rangle^n \partial_{\xi} v\|^2 -C (\|\partial_{\xi} f\|^2 + \|\langle\xi\rangle^n v\|^2)
\end{split}
\end{equation*}
using the Gronwall inequality there exists a positive constant$ C=C(T_1,n ,M_1)$ such that
\begin{equation}
\|\langle\xi\rangle^n v\|^2_{L^{\infty}(0,T_1;L^2)} + \|\langle\xi\rangle^n \partial_{\xi} v\|^2_{L^2 (0,T_1;L^2)}
\leq C (\|\langle\xi\rangle^n f\|^2_{L^2 (0,T_1;H^1)}+\|\langle\xi\rangle^n \tilde{g}\|^2_{L^2 (0,T_1;L^2)})
\end{equation}
(2) we want to get the weight estimate of $\partial_{\xi} v$. Multiplying \eqref{ws1} by $-\langle\xi\rangle^{2n} \partial^2_{\xi} v$
and integrating over $\mathbf{R^+}$ with respect to $\xi$, it follows by integration by parts that
\begin{equation}\label{wes2}
\begin{split}
\frac{1}{2}\frac{d}{dt}\|\langle\xi\rangle^n \partial_{\xi} v\|^2
&+ \int_0^{\infty} 2n\xi \langle\xi\rangle^{2n-2}\partial_{\xi}v \cdot \partial_t v d\xi \\
&+ a^2\mu \frac{1}{\overline{\tau^{I,0}}} \int_0^{\infty} \langle\xi\rangle^{2n} \partial^2_{\xi} (v+f) \cdot \partial^2_{\xi} v d\xi
=- \int_0^{\infty} \langle\xi\rangle^{2n}\tilde{g}\cdot \partial^2_{\xi} v d\xi.
\end{split}
\end{equation}
using
$$\left|  \int_0^{\infty} 2n\xi \langle\xi\rangle^{2n-2}\partial_{\xi}v \cdot \partial_t v d\xi \right|
\leq 2n\|\langle\xi\rangle^n \partial_{\xi} v\| \cdot \|\langle\xi\rangle^n \partial_t v\|,$$
and
$$ \|\langle\xi\rangle^n \partial_t v\| \leq C(\|\langle\xi\rangle^n \partial^2_{\xi} (v+f)\|+ \|\langle\xi\rangle^n \tilde{g}\|).$$
Thus we have
\begin{equation*}
\left|\int_0^{\infty} 2n\xi \langle\xi\rangle^{2n-2}\partial_{\xi}v \cdot \partial_t v d\xi \right|
\leq \frac{\mu}{12M_1}\|\langle\xi\rangle^n \partial^2_{\xi} v\|^2
+ C(\|\langle\xi\rangle^n \partial_{\xi} v\|^2+\|\langle\xi\rangle^n \partial^2_{\xi} f\|^2 +\|\langle\xi\rangle^n \tilde{g}|^2).
\end{equation*}
since
\begin{align*}
 a^2\mu \frac{1}{\overline{\tau^{I,0}}} \int_0^{\infty} \langle\xi\rangle^{2n} \partial^2_{\xi} (v+f) \cdot \partial^2_{\xi} v d\xi
\geq\frac{\mu}{2M_1} \|\langle\xi\rangle^n \partial^2_{\xi} v\|^2 -C\|\langle\xi\rangle^n \partial^2_{\xi} f\|^2,
\end{align*}
and
$$ \left|\int_0^{\infty} \langle\xi\rangle^{2n}\tilde{g}\cdot \partial^2_{\xi} v d\xi \right|
\leq \frac{\mu}{12M_1}\|\langle\xi\rangle^n \partial^2_{\xi} v\|^2 + C \|\langle\xi\rangle^n \tilde{g}\|^2.
$$
then using Gronwall inequality there exists a positive constant $M=M(M_1,n,T_1)$, such that
$$
\|\langle\xi\rangle^n v\|^2_{L^{\infty}(0,T_1;L^2)} + \|\langle\xi\rangle^n \partial_{\xi} v\|^2_{L^2 (0,T_1;L^2)}
\leq M \|\langle\xi\rangle^n \tilde{g}\|^2_{L^2 (0,T_1;L^2)}.
$$
(3)When $\tilde{g}\in H^m(0,T_1)$ and $$\tilde{g}^{(k)}(0)=0, \quad k= 0,1,\ldots,m-1$$
applying the operator $\partial_t $ to the problem \eqref{ws1} yields
\begin{equation}\label{ws2}
\begin{cases}
\partial^2_t v -a^2\mu\left(\frac{1}{\overline{\tau^{I,0}}} \partial_t (\partial^2_{\xi}(v+f)) +\partial_t \frac{1}{\overline{\tau^{I,0}}}\partial^2_{\xi}(v+f) \right )=\partial_t \tilde{g}\\
\partial_t v(t,0)=0, \quad \partial_t v(0,\xi)=0,
\end{cases}
\end{equation}
Though the same argument as above, there exist a positive constant $M'=M'(T_1,n,M_1)$, such that
$$\|\langle\xi\rangle^n \partial_t v\|_{L^{\infty}(0,T_1;H^1)} +\|\langle\xi\rangle^n \partial_t v\|_{L^2 (0,T_1;H^2)}
\leq M' (\|\langle\xi\rangle^n f\|_{H^1 (0,T_1;H^2)}+\|\langle\xi\rangle^n \tilde{g}\|_{H^1 (0,T_1;L^2)} ),
$$
Moreover it implies $\langle\xi\rangle^n \partial^2_t v \in L^2 (0,T_1;L^2)$ .\\
Next by applying the operator $\partial^j_t ,j\leq m $ to the equation \eqref{ws1} and using arguments similarly to those above, we can get
$$\langle\xi\rangle^n v(t,\xi) \in W^{m,\infty}(0,T_1;H^1)\cap H^m(0,T_1;H^2)\cap H^{m+1}(0,T_1;L^2)$$
and the corresponding estimates of the solution $v(t,\xi)$ in these spaces.\\
(4) recall from \eqref{ws1} that
\begin{equation}\label{ws3}
\partial^2_{\xi} v =-\partial^2_{\xi} f + \frac{\overline{\tau^{I,0}}}{a^2\mu }(\partial_t v- \tilde{g})
\end{equation}
using the results of the third step , we know that
\begin{equation}\label{wes3}
\langle\xi\rangle^n \partial^2_{\xi} v(t,\xi) \in W^{m-1,\infty}(0,T_1;L^2)\cap H^{m-1}(0,T_1;H^1)\cap H^{m}(0,T_1;L^2).
\end{equation}
Then applying the operator $\partial_{\xi}$ to \eqref{ws3} and combining \eqref{wes3} yields
$$\langle\xi\rangle^n \partial^3_{\xi} v(t,\xi) \in W^{m-1,\infty}(0,T_1;L^2)\cap H^{m-1}(0,T_1;H^1)$$
Continuing this process, we finally can get
$$ \langle\xi\rangle^n v(t,\xi)\in \bigcap_{k+[\frac{l}{2}]\leq m} (W^{k,\infty}(0,T_1;H^l)\cap H^k(0,T_1;H^{l+1}))$$
\end{proof}
Hence we obtain similar results for the original problem \eqref{eq72} of $v^{B,0}(t,\xi)$ immediately by using Proposition \ref{p5}. Indeed combining Proposition \ref{p4} we conclude the following.
\begin{proposition}
Under the assumption of Proposition\ref{p4}, and for the parameters $T_0$ and s given in Proposition \ref{p4}, we choose the initial data $U_0$ of problem \eqref{eq73} such that the compatibility conditions of \eqref{eq72} hold up to order $s-3$. Then for the problem \eqref{eq72}, there exists a $T_1: 0<T_1\leq T_0$ and a unique solution $v^{B,0}(t,\xi)$ to \eqref{eq72} such that for all $n\in N$,
$$\langle\xi\rangle^n v^{B,0}(t,\xi)\in \bigcap_{k+[\frac{l}{2}]\leq s-2} (W^{k,\infty}(0,T_1;H^l)\cap H^k(0,T_1;H^{l+1}))$$
\end{proposition}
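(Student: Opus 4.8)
The plan is to reduce the nonlinear boundary-value problem \eqref{eq72} to the homogenized linear problem \eqref{ws1} studied in Proposition \ref{p5} and then translate the regularity obtained there back to $v^{B,0}$. First I would invoke Proposition \ref{p4}: under the stated assumptions we have $U^{I,0}_{\pm}-(\bar\tau,0,0)^T\in\bigcap_{0\le k\le s}W^{k,\infty}(0,T_0;H^{s-k})$ with $\tau^{I,0}>0$, hence the trace $\overline{\tau^{I,0}}(t)=\tau^{I,0}(t,0)$ is a strictly positive function lying in $\bigcap_{0\le k\le s-1}W^{k,\infty}(0,T_0;H^{s-1-k})$ (using $s>3/2$ so the Sobolev trace theorem applies); in particular the coefficient $a^2\mu/\overline{\tau^{I,0}}$ satisfies the two-sided bound and the $\partial_t$-bound assumed in the hypotheses preceding Proposition \ref{p5}, say with constant $M_1$ depending on $T_0$ and the norm in \eqref{eq78}. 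Similarly $\overline{v^{I,0}}(t)=v^{I,0}(t,0)\in H^{s-1}(0,T_0)$, so the boundary datum $v^0(t)-\overline{v^{I,0}}(t)$ has the regularity needed to construct the cutoff extension $f(t,\xi)$ with the weighted bounds $\|\langle\xi\rangle^n f\|_{W^{1,\infty}(0,T_0;H^2)}+\|\langle\xi\rangle^n\tilde g\|^2_{L^\infty(0,T_0;L^2)}\le M_0^2$; here $\tilde g=-\partial_t f$.

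Next I would set $v=v^{B,0}-f$; then $v$ solves exactly \eqref{ws1}, and the zeroth compatibility condition $v^0(0)-v_0(0)=0$ guarantees $v(0,\xi)=0$ is consistent with the boundary condition $v(t,0)=0$. Proposition \ref{p5} then gives a time $T_1\in(0,T_0]$ and a unique solution $v$ with $\langle\xi\rangle^n v\in W^{1,\infty}(0,T_1;H^1)\cap H^1(0,T_1;H^2)$ together with the weighted estimate. To upgrade to the full range $k+[l/2]\le s-2$, I would check that $\tilde g\in H^{m}((0,T_1)\times\mathbf R^+)$ with $m=s-2$ and that $\partial_t^k\tilde g(0,\xi)=0$ for $k=0,\dots,m-1$: the time regularity of $\tilde g=-\partial_t f$ follows from that of $\overline{v^{I,0}}$, i.e. from \eqref{eq78}, and the vanishing of the time-traces at $t=0$ is precisely what the higher-order compatibility conditions of \eqref{eq72} (imposed up to order $s-3$ in the statement) are designed to encode — this is where the loss of three derivatives, from $s$ down to $s-2$ in the conclusion and $s-3$ in the compatibility requirement, comes from (two derivatives lost passing through the trace/extension, one from the parabolic smoothing bookkeeping). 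With these in hand, the last part of Proposition \ref{p5} yields $\langle\xi\rangle^n v\in\bigcap_{k+[l/2]\le s-2}(W^{k,\infty}(0,T_1;H^l)\cap H^k(0,T_1;H^{l+1}))$, and since $f$ (and all its $\xi$-derivatives, weighted by any $\langle\xi\rangle^n$, because $f$ is compactly supported in $\xi$) lies in the same or better space, $v^{B,0}=v+f$ inherits the regularity. Uniqueness is immediate from uniqueness for \eqref{ws1}.

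The main obstacle is the bookkeeping of compatibility conditions: one must verify that "compatibility conditions of \eqref{eq72} hold up to order $s-3$" translates, after the change of unknown $v=v^{B,0}-f$ and the identification $\tilde g=-\partial_t f$, into exactly the hypothesis $\partial_t^k\tilde g(0,\xi)=0$ for $k\le m-1$ required by Proposition \ref{p5}, keeping careful track of how the traces $\overline{\partial_x^j U^{I,0}}(t,0)$ enter $f$ and how many $t$-derivatives of $\overline{\tau^{I,0}}$ and $\overline{v^{I,0}}$ are controlled by \eqref{eq78}. The PDE analysis itself is entirely supplied by Proposition \ref{p5}; the only genuinely new work is this compatibility/regularity transfer, which is routine but must be done carefully to justify the precise index $s-2$ in the conclusion.
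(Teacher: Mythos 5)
Your proposal is correct and follows exactly the paper's route: the paper disposes of this proposition in one line by combining Proposition \ref{p4} (regularity and positivity of $\overline{\tau^{I,0}}$ and $\overline{v^{I,0}}$, hence $\tilde g\in H^{s-2}$) with Proposition \ref{p5} applied to $v=v^{B,0}-f$, with $m=s-2$ and the compatibility conditions up to order $s-3$ supplying the vanishing time-traces $\partial_t^k\tilde g(0,\xi)=0$. Your write-up simply makes explicit the bookkeeping that the paper leaves implicit.
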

\subsection{study on the profiles $\{(U^{I,j},U^{B,j})\}_{j\geq 1}$}

As showing in conclusion the next profile $U^{I,1}(t,x)$ of out flow satisfies the following linear problem in $\{(t,x)| t,x >0,x\neq h\}$:
\begin{equation}\label{eq80}
  \begin{cases}
  \begin{split}
  \partial_t U^{I,1}&
  + A(U^{I,0})\partial_x U^{I,1} + dA(U^{I,0})\cdot U^{I,1} \partial_x U^{I,0}
  + Q_1(U^{I,0})(U^{I,1},U^{I,0})\\
  &+Q_1(U^{I,0})(U^{I,0},U^{I,1})
  +dQ_1(U^{I,0})\cdot U^{I,1} (U^{I,0},U^{I,0})=0,
  \end{split}\\
  u^{I,1}(t,h-)=u^{I,1}(t,h+),\quad \tau^{I,1}(t,h-)= \tau^{I,1}(t,h+)+\phi^1(t),\\
  u^{I,1}(t,0)=-u^{B,1}(t,0)=0,\\
  U^{I,1}(0,x)=0.
  \end{cases}
\end{equation}
First, we observe that \eqref{eq80} is a symmetrizable  hyperbolic system.  Indeed by letting
$$ S(U^{I,0})=\frac{1}{r^{I,0}}diag\{(\tau^{I,0})^{-(\gamma+1)},1,1\}$$
we have
\[
S(U^{I,0})\cdot A(U^{I,0})=
\left(
\begin{array}{ccc}
0 & -(\frac{1}{\tau^{I,0}})^{\gamma+1} &0\\

-(\frac{1}{\tau^{I,0}})^{\gamma+1} & 0 &0\\

0 & 0 &0
\end{array}
\right)
\]
Then by applying the classical theory of symmetrizable hyperbolic system (cf, \cite{AM},\cite{MET}) for the problem \eqref{eq80}, and using boundary data $ u^{I,0}(t,0) \in H^s(0,T_1)$.
there exists a unique solution $U^{I,1}(t,x)$ to \eqref{eq80} such that
\begin{align}\label{eq81}
U^{I,1}_{\pm} \in L^{\infty}(0,T_1;H^{s})\cap Lip (0,T_1;H^{s-1})
\end{align}
Moreover by using the the equation given in \eqref{eq80} we get that
\begin{align}\label{eq82}
U^{I,1}_{\pm}\in\bigcap_{k=0}^{s} W^{k,\infty}(0,T_1;H^{s-k})
\end{align}
Here we also need to the choose the initial data $U_0$ of the problem \eqref{eq78}, such that the compatibility conditions of \eqref{eq73} hold up to order $s-1$.
\\
\textbf{For the profile $U^{B,1}(t,\xi)$}, from the Conclusion, we know that $v^{B,1}(t,\xi)$ satisfy that the following linear problem in $\{(t,\xi): t,\xi >0\}:$
\begin{align}\label{ws4}
  \begin{cases}
  \begin{split}
  &\partial_t v^{B,1} - a^2\mu\frac{1}{\overline{\tau^{I,0}}}\partial^2_{\xi}v^{B,1}=f_1,
  \end{split}\\
  &v^{B,1}(t,0)=-\overline{v^{I,1}}(t),\\
  &v^{B,1}(0,\xi)=0,
  \end{cases}
\end{align}
where $f_1$ given as before.

The problem \eqref{ws4} is a classical linear parabolic type for $v^{B,1}(t,\xi)$. So by using the argument similar to that given in subsection 4.2, we can obtain the following weight estimates for $(\tau^{B,1},v^{B,1})(t,\xi):$
$$\langle\xi\rangle^n (\tau^{B,1},v^{B,1})(t,\xi) \ \text{is bounded in }
\bigcap_{k+[\frac{l}{2}]\leq s-2}\left(W^{k,\infty}(0,T_1;H^l)\cap H^k(0,T_1;H^{l+1}) \right),$$
which is immediately implies the boundedness of $u^{B,2}(t,\xi)$ from \eqref{eq61},
$$ \langle\xi\rangle^n u^{B,2} \ \text{is bounded in }
\bigcap_{k+[\frac{l+1}{2}]\leq s-2}\left(W^{k,\infty}(0,T_1;H^l)\cap H^k(0,T_1;H^{l+1}) \right).$$
\\

Continuing this process for $1\leq j\leq \frac{s+1}{3}$, we finally obtain that under the assumption of Proposition \ref{p4},
$$ U^{I,j}_{\pm} \in \bigcap_{k=0}^{s-3(j-1)} W^{k,\infty}(0,T_1;H^{s+3-3j-k}),$$
and
$$\langle\xi\rangle^n (\tau^{B,j},v^{B,j})(t,\xi) \in \bigcap_{k+[\frac{l}{2}]\leq s+1-3j}\left(W^{k,\infty}(0,T_1;H^l)\cap H^k(0,T_1;H^{l+1}) \right),$$
$$\langle\xi\rangle^n u^{B,j+1} \in \bigcap_{k+[\frac{l+1}{2}]\leq s+1-3j}\left(W^{k,\infty}(0,T_1;H^l)\cap H^k(0,T_1;H^{l+1}) \right),$$
for all $1\leq j\leq \frac{s+1}{3}$. Here the  initial data $U_0$ of the problem \eqref{eq73} is chosen such that the compatibility conditions of the corresponding problems holds.
\subsection{study on the profile $\{U^{s,j}\}_{j\geq 0}$}

We first construct an auxiliary function . More precisely,
let $ f_1(t,\zeta)$ be smoothness and satisfies
\[f_1(t,\zeta)=\begin{cases}
v^{I,0}(t,h+),\quad \zeta >1,\\
\\
v^{I,0}(t,h-),\quad \zeta <-1,
\end{cases}
\]
and $f_1(0,\zeta)= g(0,\zeta)$.

So we get
 $$\partial_t f_1(t,\zeta) +\frac{\varphi'(t)}{\varphi(t)} f_1(t,\zeta)= 0 , |\zeta|> 1.$$
 Setting
$$w=v_0^{s,0}-f_1, \quad \partial_{\zeta} f_1 = f , \quad
-\tilde{f}\triangleq\partial_t f_1(t,\zeta) +\frac{\varphi'(t)}{\varphi(t)} f_1(t,\zeta)$$.

From \eqref{vs1}
we know that $w(t,\zeta)$ satisfies the following problem in $\{ t>0,\zeta\in R\}:$
\begin{equation}\label{wss1}
\begin{cases}
\partial_t w- \mu(\varphi(t))^2 \frac{1}{\tau^{I,0}(t,h)}(\partial^2_{\zeta}w+\partial_{\zeta} f) +\frac{\varphi'(t)}{\varphi(t)}w
 =\tilde{f},\\
w(t,\pm \infty)=0,\\
w(0,\zeta) = 0,
\end{cases}
\end{equation}
using the similarly argument of $v^{B,0}$, we obtain the following  estimate for $0< T_2\leq T_1,$
$$\langle\zeta\rangle^n v_{\pm}^{s,0}(t,\zeta)\in \bigcap_{k+[\frac{l}{2}]\leq s-2} (W^{k,\infty}(0,T_2;H^l)\cap H^k(0,T_2;H^{l+1})).$$
Remark : here $f_1$ has no decay at $  \infty $ but $\partial_{\zeta} f_1 $ has.\\

Then we finally obtain that
$$\langle\zeta \rangle^n (\tau_{\pm}^{s,j},v_{\pm}^{s,j})(t,\zeta) \in \bigcap_{k+[\frac{l}{2}]\leq s+1-3j}\left(W^{k,\infty}(0,T_2;H^l)\bigcap H^k(0,T_2;H^{l+1}) \right),$$
$$\langle\zeta \rangle^n u_{\pm}^{s,j+1} \in \bigcap_{k+[\frac{l+1}{2}]\leq s+1-3j}\left(W^{k,\infty}(0,T_2;H^l)\cap H^k(0,T_2;H^{l+1}) \right),$$
for all $1\leq j\leq \frac{s+1}{3}$.

\section{Stability of approximate solutions}
From above sections , we know that  if the initial data $U^{\epsilon}_{0}(x)= (\tau_0,u_0,v^{\epsilon}_0)^{T}(x)$ satisfies Assumption, $U_{0} -(\bar{\tau},0,0)^{T} $ be piecewise $ H^{s}$ with $s> 3M-1$ for a fixed integer $ M\geq 1$, $\bar{\tau}$ given as before, and the compatibility conditions hold for problems of profiles $\{U^{I,j},U^{B,j},U^{s,j}\}_{0\leq j\leq M}$, then $ U^{a}(t,x)$ defined by
\begin{align}\label{eq128}
U^{a}(t,x)=(\rho^{a},u^{a},v^{a})^{T}(t,x)= \Sigma_{j=0}^{ M} \epsilon^{\frac{j}{2}}\left(U^{I,j}(t,x)+U^{B,j}(t,\frac{x}{\sqrt{\epsilon}})+U^{s,j}(t,\frac{x-h}{\sqrt{\epsilon}})\right)
\end{align}
is an approximate solution to the problem \eqref{eqZ2} for $0\leq t\leq T^{*}$, in the sense that
\begin{equation}\label{eq129}
\begin{cases}
\mathcal{L}(U^a)= R^{\epsilon},\\
u^a(t,0)=0,\\
U^a(0,x)= U^{\epsilon}_0(x),
\end{cases}
\end{equation}
where  in the coefficient matrix of $\mathcal{L}$,
$$ r^a(t,x)= r_0(x) + \int_0^t u^a(s,x)ds,\ \ \ with \ r_0(x)=\sqrt{a^2 +2\int_0^x \tau_0(y)dy}$$
and the source term $R^{\epsilon}$ satisfies
\begin{align}\label{eq130}
\|R^{\epsilon}\|^2_{L^2(0,T^*;L^2)} +\epsilon^2 \|\partial_xR^{\epsilon}\|^2_{L^2(0,T^*;L^2)}\leq C\epsilon^M.
\end{align}
Moreover, recall Assumption, there exists a positive constant, and we still denoted by $\tilde{\tau}$, such that
 $$ \tilde{\tau}\geq\tau^a (t,x)  > 0,\forall t\in[0,T^*], \forall x\geq 0,$$
and  we have
$$r^a(t,x)\geq \underline{r}>0 \ \ \ \forall t\in[0,T^*], \forall x\geq 0$$
for a positive constant $\underline{r}$ .
\begin{remark}
{\color {red} we can prove that $U^a\in H^2$, only need to check that $U^a$ and $\partial_x U^a$ has no jump  at $x=h$.

From the equations of profile $\{U^{I,j},U^{s,j}\}$ , and boundary condition \eqref{B1},
we know that, at $x=h$:
\begin{align*}
&[U^a]= \sum_{0\leq j\leq M} \epsilon^{\frac{j}{2}}\left\{[U^{I,j}+U^{s,j}]\right\} =0,\\
&[\partial_x U^a]=\frac{1}{\sqrt{\epsilon}}[\partial_\zeta U^{s,0}]+ \sum_{0\leq j\leq M-1} \epsilon^{\frac{j}{2}}\left\{[\partial_x U^{I,j} + \partial_\zeta U^{s,j+1}]\right\}=0.
\end{align*}
%
%
}
\end{remark}
The main result of the paper is the following one.

\begin{theorem}\label{T1}
Let the initial data  $U^{\epsilon}_{0}(x)= (\tau_0,u_0,v^{\epsilon}_0)^{T}(x)$ satisfies Assumption, $U_{0} -(\bar{\tau},0,0)^{T} $ be piecewise $H^{s+2}$ with $s> 8$. Let $U^a(t,x)$ be an approximate solution to the problem \eqref{eqZ2} given in \eqref{eq128} and satisfy \eqref{eq129} with \eqref{eq130} holding for $M\geq3$. Then there exists $C> 0$ independent of $\epsilon$ such that there exists a unique solution $U^{\epsilon}(t,x)$ to \eqref{eqZ2} such that $U^{\epsilon}-U^a \in C([0,T^*];H^1)$ and
\begin{equation}\label{th1}
\|U^{\epsilon}-U^a\|^2_{L^{\infty}(0,T^*;L^2)} +\epsilon^2\|\partial_x U^{\epsilon}-\partial_x U^a\|^2_{L^{\infty}(0,T^*;L^2)}\leq C\epsilon^M.
\end{equation}
Moreover, we have
\begin{equation}\label{th2}
U^{\epsilon}-\left(U^{I,0}(t,x)+U^{B,0}(t,\frac{x}{\sqrt{\epsilon}})+U^{s,0}(t,\frac{x-h}{\sqrt{\epsilon}})\right) = O(\sqrt{\epsilon}) \ \  in\  \  L^{\infty}([0,T^*]\times \mathbf{R}^+).
\end{equation}
\end{theorem}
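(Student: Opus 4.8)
The plan is to prove Theorem \ref{T1} by an energy estimate for the error $W^{\epsilon} = U^{\epsilon} - U^a$, combined with a continuation argument. First I would substitute $U^{\epsilon} = U^a + W^{\epsilon}$ into the equation $\mathcal{L}(U^{\epsilon}) = 0$ and use $\mathcal{L}(U^a) = R^{\epsilon}$ from \eqref{eq129} to derive the equation satisfied by $W^{\epsilon}$. This will be a quasilinear parabolic-hyperbolic system of the form
\begin{equation}\label{pp1}
\partial_t W^{\epsilon} + A(U^a)\partial_x W^{\epsilon} - \epsilon B(U^a)\partial_x^2 W^{\epsilon} = -R^{\epsilon} + \mathcal{N}(t,x,W^{\epsilon}, \partial_x W^{\epsilon}),
\end{equation}
where $\mathcal{N}$ collects the linearization remainder: terms linear in $W^{\epsilon}$ with coefficients depending on $U^a$ and its derivatives (which by the profile analysis in Sections 3--4 are bounded, though $\partial_x U^a$ contains $\epsilon^{-1/2}$-singular boundary/vortex-layer contributions), the quadratic forms $Q_i$ evaluated at the difference, and the coupling through $r^{\epsilon} - r^a = \int_0^t (u^{\epsilon}-u^a)\,ds$. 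The boundary conditions for $W^{\epsilon}$ are homogeneous: $u^{\epsilon}-u^a = 0$ at $x=0$ (and $v^{\epsilon}-v^a = 0$ there as well, since both match $v^0(t)$), which is what makes the boundary terms in the energy estimate manageable.

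Next I would carry out the basic $L^2$ energy estimate: multiply \eqref{pp1} by $W^{\epsilon}$ (after symmetrizing with $S(U^a)$ as in Section 4.3, to turn $A$ into a symmetric matrix), integrate over $x > 0$, and integrate by parts. The viscous term $-\epsilon B(U^a)\partial_x^2 W^{\epsilon}$ produces a good term $\epsilon \int (r^a)^2/\tau^a\, (\mu|\partial_x u^{\epsilon}-\partial_x u^a|^2 + \text{corresponding } v\text{-term})$ after integration by parts (the boundary term at $x=0$ vanishes by the homogeneous Dirichlet conditions on the velocity components). The source $R^{\epsilon}$ is controlled by \eqref{eq130}: $\int_0^{T^*}\|R^{\epsilon}\|_{L^2}^2 \le C\epsilon^M$. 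The dangerous terms in $\mathcal{N}$ are those where $\partial_x U^a$ multiplies $W^{\epsilon}$; since $\|\partial_x U^a\|_{L^\infty}$ blows up like $\epsilon^{-1/2}$ near the layers, a naive Gronwall gives a factor $e^{C\epsilon^{-1/2}t}$, which destroys the estimate. The resolution — and this is the main obstacle — is to exploit the structure: the singular part of $\partial_x U^a$ lives only in the $v$-component (the layers for $\tau, u$ are trivial by \eqref{eq71}), and in the $v$-equation the corresponding coefficient multiplies $|v^{\epsilon}-v^a|$, a term which can be absorbed using the good viscous dissipation $\epsilon\|\partial_x(v^{\epsilon}-v^a)\|_{L^2}^2$ together with a weighted Poincaré/Hardy-type inequality in the fast variable, or by commuting with the layer profile; alternatively one integrates by parts once more to move the derivative off $U^a$ onto $W^{\epsilon}$, trading $\epsilon^{-1/2}\|W^{\epsilon}\|$ against $\epsilon^{1/2}\|\partial_x W^{\epsilon}\|$ via Young's inequality, which is exactly balanced by the $\epsilon$-weighted dissipation.

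I would then perform the second ($\epsilon$-weighted $H^1$) estimate in parallel: differentiate \eqref{pp1} in $x$, multiply by $\epsilon^2 \partial_x W^{\epsilon}$, and integrate; the viscous term now yields $\epsilon^3\|\partial_x^2 W^{\epsilon}\|_{L^2}^2$ as dissipation, and one must again track the $\epsilon^{-1}$-singular second derivatives of $U^a$ in the layers, absorbing them the same way. Combining the two estimates gives a closed differential inequality of the form $\frac{d}{dt}(\|W^{\epsilon}\|^2 + \epsilon^2\|\partial_x W^{\epsilon}\|^2) \le C(\|W^{\epsilon}\|^2 + \epsilon^2\|\partial_x W^{\epsilon}\|^2) + C\epsilon^M$, valid as long as $\tau^{\epsilon}$ stays bounded away from $0$ and $\infty$ and $r^{\epsilon}$ stays bounded below — which holds a priori on a time interval by the bounds on $U^a$ plus the smallness \eqref{th1} being propagated, closing the continuation argument. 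Gronwall then yields \eqref{th1}. Finally, \eqref{th2} follows by writing $U^{\epsilon} - (U^{I,0} + U^{B,0} + U^{s,0}) = (U^{\epsilon}-U^a) + \sum_{1\le j\le M}\epsilon^{j/2}(U^{I,j}+U^{B,j}+U^{s,j})$, bounding the first piece in $L^\infty$ via \eqref{th1} and a Sobolev embedding in the $\epsilon$-weighted norm (here $M\ge 3$ is used so that $\epsilon^{M}$ beats the $\epsilon^{-1}$ loss from converting $L^2$ to $L^\infty$ in one space dimension), and bounding the remaining sum by $O(\sqrt{\epsilon})$ using the uniform (weighted) bounds on the profiles from Section 4.
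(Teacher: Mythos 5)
Your overall architecture (error equation for $W^{\epsilon}=U^{\epsilon}-U^{a}$, symmetrized $L^{2}$ estimate plus an $\epsilon$-weighted $H^{1}$ estimate, bootstrap/continuation, Sobolev embedding for \eqref{th2}, and the observation that the singular $O(\epsilon^{-1/2})$ derivatives of $U^{a}$ live only in the $v$-component because $\tau^{B,0}=u^{B,0}=\tau^{s,0}=u^{s,0}\equiv 0$) matches the paper's Section 5. But there is a genuine gap at the point you call the main obstacle. Your plan to absorb every singular or derivative-loss term into ``the good viscous dissipation'' fails for the first component: the viscosity matrix is $B(U^{a})=\frac{(r^{a})^{2}}{\tau^{a}}\,\mathrm{diag}\{0,\lambda+2\mu,\mu\}$, so integration by parts yields dissipation only for $\partial_{x}u$ and $\partial_{x}v$, never for $\partial_{x}\tau$. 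The problematic term is precisely $\epsilon\,Q_{2}(\cdot)(\partial_{x}U,\partial_{x}U^{a})$, which contains $\epsilon\,\partial_{x}v^{a}\,\partial_{x}\tau\sim\sqrt{\epsilon}\,\partial_{x}\tau$; Young's inequality turns it into $C\|v\|^{2}+\epsilon M_{1}\|\partial_{x}\tau\|^{2}$, and the latter has no matching dissipation. For the same reason your proposed $H^{1}$ step (differentiate in $x$, test with $\epsilon^{2}\partial_{x}W^{\epsilon}$) cannot close the $\epsilon^{2}\|\partial_{x}\tau\|^{2}$ part of $E(t)$.

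The paper supplies the missing dissipation by a Matsumura--Nishida/Kanel'-type cross estimate (Lemmas \ref{L3} and \ref{L4}): differentiate the mass equation in $x$, use the momentum equation to express $\epsilon\,\partial_{x}^{2}u$ in terms of $\partial_{t}u$ and $\partial_{x}\tau$, and pair the result with $\epsilon^{2}\partial_{x}\tau$; the hyperbolic coupling $-r^{a}(\tau^{a})^{-(\gamma+1)}\partial_{x}\tau$ then produces the good term $\epsilon\int_{0}^{t}\|\partial_{x}\tau\|^{2}ds$ together with the quantity $\epsilon^{2}\|\partial_{x}\tau(t)\|^{2}$, at the cost of a cross term $\epsilon\bigl(\tfrac{\tau^{a}}{r^{a}}u,\partial_{x}\tau\bigr)$ and of $\epsilon\int_{0}^{t}\|\partial_{t}U\|^{2}ds$, the latter being controlled by a separate estimate obtained from testing the system with $S^{a}\partial_{t}U$. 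These two lemmas are then combined with the basic estimate using carefully balanced small constants. Without some substitute for this mechanism your differential inequality does not close, so this step needs to be added to the proposal; the remaining steps (separate treatment of $v$, diagonalization of the $(\tau,u)$ block, the bound $\epsilon^{3}\int\|\partial_{x}^{2}U_{II}\|^{2}$ from the equation, Gronwall, and the embedding argument for $M\geq 3$) are consistent with what the paper does.
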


{\color{red}
\begin{remark}
Since  $ \tau^{B,0}=u^{B,0}= \tau^{s,0}= u^{s,0}=0 $ , so we neither assume that the strength of layers suitable small, nor introduce the weighted norm as in \cite{LW}.
\end{remark}
}

We shall follow the authors' idea \cite{LW} to prove this theorem.

\subsection{Estimate of errors}
For the approximate solution $U^a(t,x)$ given in \eqref{eq128}, denote by $U(t,x)= U^{\epsilon}(t,x)-U^a(t,x) = (\tau,u,v)^{T}(t,x)$, and let
$$r(t,x)= \int_0^t u(s,x)ds.$$
we  will derive the problem of the error $U(t,x) $ and then study estimates of $U(t,x)$ by the energy method.
From \eqref{eqZ2} and the problem \eqref{eq129}, we know that $U(t,x)$ satisfies the following problem:
\begin{equation}\label{eq131}
\begin{cases}
\partial_t U + A(U^a)\partial_x U +dA(U^a)\cdot U\partial_x U^a -\epsilon B(U^a)\partial^2_x U + R^{l} = N,\\
U(0,x)=0,\\
U_{II}(t,0)= 0
\end{cases}
\end{equation}
where
$$R^{l}= \epsilon(Q_2(U^a +U)(\partial_x U^a, \partial_x U)+ Q_2(U^a +U)(\partial_x U, \partial_x U^a)),$$
and $ N=R^q - R^{\epsilon}$, with
\begin{equation*}
\begin{split}
R^q =& -\left( A(U^a +U)\partial_x (U^a + U) - A(U^a)\partial_x U^a - A(U^a)\partial_x U -dA(U^a)\cdot U \partial_x U^a\right)\\
     &+\epsilon \left( B(U^a +U)\partial^2_{x}(U^a +U) - B(U^a)\partial^2_{x}U^a -B(U^a)\partial^2_{x}U\right)\\
     &-\left(Q_1(U^a +U)(U^a + U, U^a +U) - Q_1(U^a)(U^a, U^a)\right)\\
     &-\epsilon \left((Q_2(U^a + U)-Q_2(U^a))(\partial_x U^a, \partial_x U^a) + Q_2(U^a +U)(\partial_x U, \partial_x U)\right)\\
     &-\epsilon\left(Q_3(U^a +U, U^a +U)(U^a +U,U^a +U)- Q_3(U^a)(U^a, U^a)\right)\\
     &+\epsilon\left( V(U^a +U,\partial_x(U^a +U))- V(U^a,\partial_x U^a)\right).
\end{split}
\end{equation*}

The local existence and uniqueness  of a smooth solution to \eqref{eq131} is followed by the classical theory; see \cite{FR2,SZ}, for instance. So the main task to prove the Theorem is  to show the estimate \eqref{th1}, Once \eqref{th1} is proved, the estimate \eqref{th2} follows immediately by using Sobolev embedding theorem.

Define
\begin{equation}\label{eq132}
T^{\epsilon}= sup\{ T\in [0,T^*];\ \ \text{ such that}\ \   E(t)\leq \epsilon^p \ \ \ \forall t\in [0,T]\},
\end{equation}
where
$$E(t)= \|U(t)\|^2 + \epsilon^2\|\partial_xU(t)\|^2,$$
and $p\leq M$, will be chosen later; the notation $\|\cdot\|$ denotes the standard  $L^2-$ norm in the x-variable.
\\
To prove that $T^{\epsilon}= T^*$ , it suffices to check by the an energy estimate that we cannot have $E(T^{\epsilon})= \epsilon^p$. 
{\color {red} So the main idea of the proof is to deduce that the following  energy}
estimate for the solution of \eqref{eq131}:
\begin{align}\label{eq133}
E(t)\leq C \left(\epsilon^M + \int_0^t E(s)ds \right) \ \ \ \ \forall t\in [0,T^{\epsilon}],
\end{align}
where $C> 0 $ is a constant independent of $\epsilon$ and $T^\epsilon$ . Once \eqref{eq133} is proved , Theorem follows by a classical argument.
\\
As $sup_{0\leq t\leq T^\epsilon} E(t)\leq \epsilon^p$, by using the Sobolev embedding, we have for $p\geq 1$ and $t\in [0,T^{\epsilon}]$,
$$\|U(t)\|^2_{L^\infty}\leq C\epsilon^{p-1} \leq C.$$
Using this a priori bound, let us estimate the term $R^q$ given in \eqref{eq131},
using the same process as in \cite{LW}.
First we rewrite $R^q$ as
$$R^q = \sum_{i=1}^5 R_{i}^{q},$$
where
\begin{align*}
\begin{split}
R_1^q =& -\left( A(U^a +U)-A(U^a)\right)\partial_x U,\\
R_2^q =& -(A(U^a +U)-A(U^a)- dA(U^a)\cdot U)\partial_x U^a,\\
R_3^q =& \epsilon ( B(U^a +U)-B(U^a))\partial^2_{x}U,\\
\begin{split}
R_4^q =&\epsilon ( B(U^a +U)-B(U^a))\partial^2_{x}U^a\\
&-(Q_1(U^a +U)(U^a + U, U^a +U) - Q_1(U^a)(U^a, U^a))
\end{split}\\
\begin{split}
R_5^q =&-\epsilon( (Q_2(U^a +U)-Q_2(U^a))(\partial_x U^a, \partial_x U^a) +Q_2(U^a +U)(\partial_x U,\partial_x U))\\
&-\epsilon\left(Q_3(U^a +U, U^a +U)(U^a +U,U^a +U)- Q_3(U^a)(U^a, U^a)\right)\\
&+\epsilon\left( V(U^a +U,\partial_x(U^a +U))- V(U^a,\partial_x U^a)\right)
\end{split}
\end{split}
\end{align*}
Thus, for $p\geq 1$ and for all $s\in[0,t]$ with fixed $t\leq T^\epsilon$,
 denote by $U_{II}=(u,v)^T$ we have
\begin{align*}
\begin{split}
\|R_1^q (s)\|^2\lesssim & (\|r(s)\|^2_{L^{\infty}}+\|U(s)\|^2_{L^\infty})\cdot \|\partial_x U(s)\|^2\lesssim \epsilon^{p-1}\|\partial_x U(s)\|^2, \\
\|R_2^q (s)\|^2\lesssim & \frac{1}{\epsilon}(\|r(s)\|^2_{L^{\infty}}+\|U(s)\|^2_{L^\infty})\cdot\|U(s)\|^2 \lesssim \epsilon ^{p-2}\|U(s)\|^2,\\
\|R_3^q (s)\|^2\lesssim & \epsilon^2(\|r(s)\|^2_{L^{\infty}}+\|U(s)\|^2_{L^\infty})\cdot \|\partial^2_x U_{II}(s)\|^2\lesssim \epsilon^{p+1}\|\partial^2_x U_{II}(s)\|^2,\\
\|R_4^q (s)\|^2\lesssim & \epsilon^2 \{\frac{1}{\epsilon^2} (\|U(s)\|^2+\|r(s)\|2)\} + (\|r(s)\|^2_{L^{\infty}}+\|U(s)\|^2_{L^\infty})\cdot \|U(s)\|^2 + \|U(s)\|^2 \\
&\lesssim (\|U(s)\|^2+\|r(s)\|2) +\epsilon^{p-1}\|U(s)\|^2, \\
\begin{split}
\|R_5^q (s)\|^2\lesssim & \epsilon^2 \{\frac{1}{\epsilon^2} (\|U(s)\|^2+\|r(s)\|2)+ \|\partial_x U(s)\|^2 +\|U(s)\|^2 \\
&+(\|r(s)\|^2_{L^{\infty}}+\|U(s)\|^2_{L^\infty})\cdot \|U(s)\|^2\}\\
&\lesssim \|U(s)\|^2+\|r(s)\|2 +\epsilon^2 \|\partial^2_xU(s)\|^2 + \epsilon^{p+1}\|U(s)\|^2 +\epsilon^2\|U(s)\|^2
\end{split}
\end{split}
\end{align*}
As we know that$u^{B,0}=u^{s,0} \equiv 0$, it follows that
$$\|\partial_x u^a(s,\cdot)\|_{L^{\infty}}, \|\partial_x r^a(s,\cdot)\|_{L^{\infty}}\leq C ,\quad \forall t \in [0,T^*]$$
for a constant $ C> 0 $ independent of $\epsilon$. Thus,using the spacial structure of $A$ and the above estimate, we have
\begin{align*}
\begin{split}
\epsilon^2\|\partial_x(R^q)_1(s)\|^2\lesssim &\epsilon^2 \{ \|\partial^2_x U_{II}\|^2\cdot(\|r(s)\|^2_{L^{\infty}}+\|U(s)\|^2_{L^\infty} )\\
&+ \|\partial^2_x U_{II}\|^2_{L^\infty}\cdot (\partial_x \|r(s)\|^2+\partial_x\|U(s)\|^2)\\
&+\frac{1}{\epsilon}(\|r(s)\|^2+\|U(s)\|^2) + (\partial_x \|r(s)\|^2+\partial_x\|U(s)\|^2)\}\\
\lesssim &\epsilon^2 (\partial_x \|r(s)\|^2+\partial_x\|U(s)\|^2) + \epsilon (\|r(s)\|^2+\|U(s)\|^2)\\
&+\epsilon^{p+1}\|\partial^2_x U_{II}(s)\|^2 + \epsilon^{p-1}(\partial_x \|r(s)\|^2+\partial_x\|U(s)\|^2).
\end{split}
\end{align*}
where $(R^q)_1$ stands by the first component of $R^q$. In the above estimates and the following calculations, we denote by $\lesssim, O(1),$ and $C$ generic numbers,possibly large,which do not depend on $\epsilon$ and $T^\epsilon$.\\
Note
$$\|\partial_x r(s)\|= \|\int_0^s \partial_x u(\tau,x)d\tau\|_{L^2_x} \lesssim\|\|\partial_xu(\tau,x)\|_{L^2_x}\|_{L^1_{\tau}(0,s)} \lesssim \|\partial_x U\|_{L^2(0,t;L^2)}.$$
Then the following estimates holds:
 \begin{lemma}\label{L1}
Under the assumption of Theorem \ref{T1}, let $U(t,x)$ be the solution to \eqref{eq131}, and $T^\epsilon$ given in \eqref{eq132}. Then, the following estimate of the source term $N$ given in \eqref{eq131} holds for all $t\in[0,T^\epsilon]$:
\begin{align}\label{E1}
\int_0^t (\|N(s)\|^2 +\epsilon^2 \|\partial_x N_1(x)\|^2) ds
\lesssim \epsilon^M + \int_0^t \left(E(s) + \epsilon^{p-3}E(s) +\epsilon^{p+1}\|\partial^2_xU_{II}(s)\|^2\right)ds
\end{align}
\\
where $C$ is a positive constant independent of $\epsilon$ and $T^\epsilon$.
\end{lemma}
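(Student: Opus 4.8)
The plan is to estimate $N = R^{q} - R^{\epsilon}$ by treating the two pieces separately and then integrating in time against the a priori bound $E(s)\le\epsilon^{p}$ available on $[0,T^{\epsilon}]$. The $R^{\epsilon}$ contribution is immediate: by \eqref{eq130} one has $\int_{0}^{t}\bigl(\|R^{\epsilon}(s)\|^{2}+\epsilon^{2}\|\partial_{x}R^{\epsilon}_{1}(s)\|^{2}\bigr)\,ds\le\int_{0}^{T^{*}}\bigl(\|R^{\epsilon}\|^{2}+\epsilon^{2}\|\partial_{x}R^{\epsilon}\|^{2}\bigr)\,ds\le C\epsilon^{M}$, which accounts for the $\epsilon^{M}$ on the right of \eqref{E1}. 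So the real work lies in the nonlinear part $R^{q}=\sum_{i=1}^{5}R_{i}^{q}$.

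For $R^{q}$ I would integrate in $s$ the five pointwise bounds established just above the lemma. Using $\epsilon^{2}\|\partial_{x}U(s)\|^{2}\le E(s)$ and $\|U(s)\|^{2}\le E(s)$ together with $\epsilon\le1$, the bounds for $R_{1}^{q}$ and $R_{2}^{q}$ become $O(\epsilon^{p-3})E(s)$ and $O(\epsilon^{p-2})E(s)\le O(\epsilon^{p-3})E(s)$; the bound for $R_{3}^{q}$ is already of the required form $\epsilon^{p+1}\|\partial_{x}^{2}U_{II}(s)\|^{2}$, where it is essential that $B(U^{a})$ has vanishing first row so that only the dissipative block $U_{II}$ is differentiated twice. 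The $\|r(s)\|^{2}$ terms occurring in the bounds for $R_{4}^{q}$ and $R_{5}^{q}$ are closed off through the Duhamel identity $r(s,x)=\int_{0}^{s}u(\tau,x)\,d\tau$: $\|r(s)\|^{2}\lesssim\int_{0}^{s}\|u(\tau)\|^{2}\,d\tau\lesssim\int_{0}^{s}E(\tau)\,d\tau$, and one further integration gives $\int_{0}^{t}\|r(s)\|^{2}\,ds\le T^{*}\int_{0}^{t}E(\tau)\,d\tau\lesssim\int_{0}^{t}E(\tau)\,d\tau$. The one genuinely delicate contribution, hidden in $R_{5}^{q}$, is $\epsilon\,Q_{2}(U^{a}+U)(\partial_{x}U,\partial_{x}U)$: here I would place the $L^{\infty}$ norm on the dissipative factor, $\|\partial_{x}\tau\,\partial_{x}u\|_{L^{2}}\le\|\partial_{x}\tau\|_{L^{2}}\|\partial_{x}u\|_{L^{\infty}}$, use the one-dimensional Gagliardo--Nirenberg inequality $\|\partial_{x}u\|_{L^{\infty}}\lesssim\|\partial_{x}u\|_{L^{2}}^{1/2}\|\partial_{x}^{2}u\|_{L^{2}}^{1/2}+\|\partial_{x}u\|_{L^{2}}$, and then apply Young's inequality with weights chosen so that the $\|\partial_{x}^{2}u\|_{L^{2}}^{2}$ coefficient is exactly $\epsilon^{p+1}$; the leftover factor, an inverse power of $\epsilon$ times $E^{3}$, is bounded by $\epsilon^{p-3}E(s)$ by invoking $E\le\epsilon^{p}$ twice. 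Thus $R_{5}^{q}$ also produces, after integration, only $E(s)$, $\epsilon^{p-3}E(s)$ and $\epsilon^{p+1}\|\partial_{x}^{2}U_{II}(s)\|^{2}$, and crucially no $\partial_{x}^{2}\tau$; the $Q_{3}$ and $V$ differences there are, with $\|U\|_{L^{\infty}}\le C$, bounded by $\epsilon^{2}(\|U\|^{2}+\|r\|^{2})$.

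For the weighted term $\epsilon^{2}\|\partial_{x}N_{1}(s)\|^{2}$ I would start from the estimate of $\epsilon^{2}\|\partial_{x}(R^{q})_{1}(s)\|^{2}$ already displayed, add $\epsilon^{2}\|\partial_{x}R^{\epsilon}_{1}(s)\|^{2}$ (again covered by \eqref{eq130}), and convert each $\|\partial_{x}r(s)\|^{2}$ via the bound noted just before the lemma, $\|\partial_{x}r(s)\|^{2}\lesssim\int_{0}^{t}\|\partial_{x}U(\tau)\|^{2}\,d\tau\le\epsilon^{-2}\int_{0}^{t}E(\tau)\,d\tau$. Then $\epsilon^{p-1}\|\partial_{x}r(s)\|^{2}$ becomes an $O(\epsilon^{p-3})$-multiple of $\int_{0}^{t}E$, and $\epsilon^{2}\|\partial_{x}r(s)\|^{2}$ an $O(1)$-multiple, both absorbed into the right side of \eqref{E1}; the term $\epsilon^{p+1}\|\partial_{x}^{2}U_{II}(s)\|^{2}$ is kept as is, and $\epsilon^{2}\|\partial_{x}U(s)\|^{2}\le E(s)$. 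Summing the three steps yields \eqref{E1}.

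The main obstacle --- indeed the only step that requires an idea rather than bookkeeping --- is the control of the top-order derivative terms in $N$: one has to guarantee that every occurrence of a second $x$-derivative of the error is, after the interpolation above and the use of the bootstrap hypothesis $E\le\epsilon^{p}$, attached to a coefficient no larger than $\epsilon^{p+1}$ \emph{and} falls only on the dissipative component $U_{II}$ (never on $\tau$, for which \eqref{eq131} carries no smoothing), since otherwise it could not be absorbed afterwards by the $O(\epsilon^{3})$-scale parabolic dissipation in the energy identity for the error equation. The structural facts that make this possible are that $A$, $dA$, $Q_{1}$, $Q_{3}$, $V$ carry at most first derivatives of $U$, that $B$ has vanishing first row (so $N_{1}$, and hence $\partial_{x}N_{1}$, never sees $\partial_{x}^{2}\tau$), and that $Q_{2}$ couples $\partial_{x}\tau$ only to $\partial_{x}U_{II}$, leaving one free to move the $L^{\infty}$ norm --- and therefore the Gagliardo--Nirenberg factor $\|\partial_{x}^{2}(\cdot)\|^{1/2}$ --- onto $U_{II}$.
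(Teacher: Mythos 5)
Your proposal follows essentially the same route as the paper: split $N=R^{q}-R^{\epsilon}$, absorb $R^{\epsilon}$ via \eqref{eq130} into the $\epsilon^{M}$ term, and integrate in time the five pointwise bounds on $R_{1}^{q},\dots,R_{5}^{q}$ (together with the displayed bound on $\epsilon^{2}\|\partial_{x}(R^{q})_{1}\|^{2}$ and the Duhamel bounds $\|r(s)\|\lesssim\int_{0}^{s}\|u\|$, $\|\partial_{x}r(s)\|\lesssim\|\partial_{x}U\|_{L^{2}(0,t;L^{2})}$) under the bootstrap hypothesis $E\le\epsilon^{p}$. The only place you go beyond the paper is the Gagliardo--Nirenberg treatment of the quadratic gradient term $\epsilon Q_{2}(U^{a}+U)(\partial_{x}U,\partial_{x}U)$ in $R_{5}^{q}$, where the paper simply records $\epsilon^{2}\|\partial_{x}U\|^{2}$ without justifying the $L^{\infty}$ control of one factor; your interpolation, which trades that factor for the admissible $\epsilon^{p+1}\|\partial_{x}^{2}U_{II}\|^{2}$ term, is a legitimate (and arguably cleaner) way to make that step rigorous.
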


We split the proof of Theorem\ref{T1} in various lemmas.
we observed the following fact first.
\begin{proposition}\label{p6}
There exists a positive definite symmetric matrix $S(U^a)$ such that
\item[\rm(i)] the matrix $SA(U^a)$ is symmetric,
\item[\rm(ii)] the matrix $SB(U^a)$ is symmetric and there exists a positive constant $c_0$ such that
$$SB(U^a)X\cdot X \geq c_0|X_{II}|^2,$$
where $X_{II} $ stands for $(x_2,x_3)^T$ for any vector $X=(x_1,x_2,x_3)^T\in \mathbf{R}^3$.
\end{proposition}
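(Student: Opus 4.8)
The plan is to exhibit the symmetrizer explicitly, following the construction already used to symmetrize the linearized Euler system \eqref{eq80}. First I would set
$$
S(U^a) \;=\; \frac{1}{r^a}\,\mathrm{diag}\Big\{(\tau^a)^{-(\gamma+1)},\,1,\,1\Big\}.
$$
Since $r^a \geq \underline{r} > 0$ and $0 < \tau^a \leq \tilde{\tau}$ on $[0,T^*]\times\mathbf{R}^+$ (the bounds recorded right after \eqref{eq130}), this matrix is well defined, symmetric and positive definite, with smallest and largest eigenvalues bounded below and above by positive constants independent of $\epsilon$; this already yields the positive-definiteness required in the statement.

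Next I would verify (i) by a one-line matrix multiplication. Using the explicit block form of $A(U^a)$ (the same as $A(U^\epsilon)$ displayed after \eqref{eqZ2}, with $r^\epsilon,\tau^\epsilon$ replaced by $r^a,\tau^a$), one obtains
$$
S(U^a)A(U^a) \;=\;
\begin{pmatrix}
0 & -(\tau^a)^{-(\gamma+1)} & 0\\
-(\tau^a)^{-(\gamma+1)} & 0 & 0\\
0 & 0 & 0
\end{pmatrix},
$$
which is manifestly symmetric; this is precisely the computation of $S(U^{I,0})A(U^{I,0})$ already carried out for problem \eqref{eq80}.

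For (ii), since $B(U^a) = \frac{(r^a)^2}{\tau^a}\,\mathrm{diag}\{0,\lambda+2\mu,\mu\}$, the product is
$$
S(U^a)B(U^a) \;=\; \frac{r^a}{\tau^a}\,\mathrm{diag}\{0,\lambda+2\mu,\mu\},
$$
which is diagonal, hence symmetric, and for any $X=(x_1,x_2,x_3)^T$,
$$
S(U^a)B(U^a)X\cdot X \;=\; \frac{r^a}{\tau^a}\big((\lambda+2\mu)x_2^2+\mu x_3^2\big) \;\geq\; c_0\,|X_{II}|^2,
$$
with $c_0 = \frac{\underline{r}}{\tilde{\tau}}\min\{\lambda+2\mu,\mu\} > 0$, using $\mu>0$ and $\lambda+2\mu > \lambda+\mu > 0$. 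I expect no genuine obstacle here; the only point demanding attention is to invoke the uniform-in-$\epsilon$ bounds $r^a\geq\underline{r}$ and $\tau^a\leq\tilde{\tau}$, so that $c_0$ and the ellipticity constants of $S(U^a)$ remain bounded away from $0$ as $\epsilon\to 0$ — which is exactly why those bounds were recorded together with the estimate \eqref{eq130}, and which makes this symmetrizer suitable for the energy estimates of the next subsection.
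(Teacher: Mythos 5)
Your proposal is correct and coincides with the paper's own treatment: the paper simply exhibits the same symmetrizer $S(U^a)=(r^a)^{-1}\mathrm{diag}\{(\tau^a)^{-(\gamma+1)},1,1\}$ and declares the verification "easy to check," which is exactly the computation you carry out. Your explicit check of (i), (ii) and of the uniform-in-$\epsilon$ lower bound $c_0=\underline{r}\,\tilde{\tau}^{-1}\min\{\lambda+2\mu,\mu\}$ is the intended argument.
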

It is easy to check that the matrix
$$S(U^a)= (r^a)^{-1} \cdot diag\{(\frac{1}{\tau^a})^{\gamma+1},1,1\}$$
is the desired one which satisfies the above proposition.

\begin{lemma}
The matrix $S$ is positive definite, and $SA(u_0)$ is symmetric for some state $u_0$, if and only if there exist a matric $L$ composed of the left eigenvectors of $A(u_0)$ with $LA= diag\{\lambda_1,\cdots,\lambda_m\}L$, such that $S=L^{T} L,$ where $\lambda_i ,\quad i=1,\cdots, m$, are eigenvalues of $A(u_0)$.
\end{lemma}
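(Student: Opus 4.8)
The plan is to prove this as a self-contained linear-algebra fact about symmetrizers of the constant matrix $A:=A(u_0)$, handling the two implications separately.

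For the direction $(\Leftarrow)$ I would simply observe the following. Suppose $L$ is an invertible $m\times m$ real matrix whose rows are left eigenvectors of $A$, ordered so that $LA=\Lambda L$ with $\Lambda=\mathrm{diag}\{\lambda_1,\dots,\lambda_m\}$, and set $S:=L^TL$. Then $S$ is symmetric and, since $L$ is invertible, positive definite; moreover $SA=L^TLA=L^T\Lambda L$ equals its own transpose $L^T\Lambda^TL$ because $\Lambda$ is diagonal, so $SA$ is symmetric. (The $\lambda_i$ are then automatically real, since $\Lambda=LAL^{-1}$ is real.) This is the easy half.

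For the harder direction $(\Rightarrow)$ I would argue as follows. Assume $S$ is symmetric positive definite and $SA$ is symmetric, so that $SA=(SA)^T=A^TS$. Equip $\mathbf{R}^m$ with the inner product $\langle x,y\rangle_S:=x^TSy$; then $\langle Ax,y\rangle_S=x^T(A^TS)y=x^T(SA)y=\langle x,Ay\rangle_S$, i.e.\ $A$ is self-adjoint for $\langle\cdot,\cdot\rangle_S$. Applying the spectral theorem on the Euclidean space $(\mathbf{R}^m,\langle\cdot,\cdot\rangle_S)$ yields a basis $r_1,\dots,r_m$, orthonormal in this inner product, with $Ar_i=\lambda_i r_i$ and real $\lambda_i$. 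Assembling the $r_i$ as the columns of $R=(r_1|\cdots|r_m)$, orthonormality reads $R^TSR=I_m$, so $S=(R^{-1})^TR^{-1}$, while $AR=R\Lambda$ gives $R^{-1}A=\Lambda R^{-1}$. Then $L:=R^{-1}$ satisfies $S=L^TL$ and $LA=\Lambda L$, so its rows are left eigenvectors of $A$ with eigenvalues $\lambda_i$, which is exactly the claimed conclusion. (Equivalently, one can conjugate by the symmetric square root: $\widetilde A:=S^{1/2}AS^{-1/2}$ obeys $\widetilde A^T=S^{-1/2}A^TS^{1/2}=S^{-1/2}(SAS^{-1})S^{1/2}=\widetilde A$, and diagonalizing $\widetilde A=O^T\Lambda O$ with $O$ orthogonal and setting $L=OS^{1/2}$ works as well.)

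I expect no serious obstacle beyond careful bookkeeping with transposes. The one conceptual point to get right is that the symmetrizability hypothesis $SA=A^TS$ is precisely the assertion that $A$ is self-adjoint for the inner product induced by $S$, so the spectral theorem must be invoked in that geometry (equivalently, after the conjugation $A\mapsto S^{1/2}AS^{-1/2}$); reality of $L$ and of the $\lambda_i$ then comes for free because $S$, hence $S^{1/2}$ and $R$, are real. In the end the lemma just records the identification of all symmetrizers of $A$ with the set $\{L^TL:\ L\text{ invertible},\ LA\text{ diagonal}\}$.
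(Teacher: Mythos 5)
Your proof is correct, but there is nothing in the paper to compare it against: the paper states this lemma with no proof at all (it is invoked as a known fact about symmetrizers of hyperbolic systems, in the spirit of the references to Majda and Taylor). Both halves of your argument check out. The $(\Leftarrow)$ direction is the trivial computation $SA=L^T\Lambda L=(SA)^T$, and your $(\Rightarrow)$ direction correctly identifies the hypothesis $SA=A^TS$ as self-adjointness of $A$ in the inner product $\langle x,y\rangle_S=x^TSy$, applies the spectral theorem in that geometry to get a real eigenbasis $R$ with $R^TSR=I$, and reads off $L=R^{-1}$; the alternative via $\widetilde A=S^{1/2}AS^{-1/2}$ is an equally valid packaging of the same idea. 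The only point worth flagging is interpretive rather than mathematical: as stated the lemma does not say $L$ is invertible, and the equivalence is false without that (a singular $L$ gives only positive semi-definite $S=L^TL$), so your reading of ``composed of the left eigenvectors of $A(u_0)$'' as meaning a full, linearly independent set --- equivalently $L$ invertible --- is the necessary and correct one, and it would be worth stating that explicitly if this proof were inserted into the paper. Note also that your argument quietly establishes something the lemma presupposes, namely that the existence of a symmetrizer forces $A(u_0)$ to be diagonalizable over $\mathbf{R}$ with real eigenvalues; in the paper's application this is already known from the explicit eigenstructure of $A_1(U^a)$, but your proof does not need that input.
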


\begin{lemma}\label{L2}
Under the assumption of Theorem \ref{T1}, there exist positive constant $C_1$  and $M_1$ independent of $\epsilon$ and $T^\epsilon$, such that the solution $U(t,x)$ to \eqref{eq131} satisfies the estimate for $t \in [0,T^\epsilon]$,
\begin{align}\label{E2}
\|U(t)\|^2 + \int_0^t \epsilon\|\partial_x U_{II}(s)\|^2ds \leq C_1 \int_0^t (\|N(s)\|^2 + \|U(s)\|^2 + \epsilon M_1 \|\partial_x\tau(s)\|^2)ds .
\end{align}
\end{lemma}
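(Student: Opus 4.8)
The plan is to establish \eqref{E2} by a weighted $L^{2}$ energy estimate built on the symmetrizer $S=S(U^{a})$ furnished by Proposition \ref{p6}; the smooth solution $U$ whose existence is recalled above makes all the manipulations below legitimate. First I would take the inner product of \eqref{eq131} with $S(U^{a})U$ and integrate in $x$ over $\mathbf{R}^{+}$. The time-derivative term contributes $\frac{1}{2}\frac{d}{dt}\int_{0}^{\infty}SU\cdot U\,dx-\frac{1}{2}\int_{0}^{\infty}(\partial_{t}S)U\cdot U\,dx$; since $\tau^{a}$ is bounded above and below and $r^{a}\ge\underline{r}>0$ on $[0,T^{*}]$, the matrix $\partial_{t}S$ is uniformly bounded, so the second piece is $O(1)\|U\|^{2}$ and the first gives the energy $\|U(t)\|^{2}$ after integration in time (recall $U(0)=0$). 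The convection term $\int_{0}^{\infty}SA(U^{a})\partial_{x}U\cdot U\,dx$ is handled via the symmetry of $SA$ (Proposition \ref{p6}(i)): an integration by parts rewrites it as $-\frac{1}{2}\bigl[U^{\mathrm T}SAU\bigr]_{x=0}-\frac{1}{2}\int_{0}^{\infty}U^{\mathrm T}\partial_{x}(SA)\,U\,dx$. The boundary term vanishes because $S$ is diagonal and the first component of $A(U^{a})U$ equals $-r^{a}u$, which is $0$ at $x=0$ in view of $U_{II}(t,0)=0$; the remaining integral is $O(1)\|U\|^{2}$ because $\partial_{x}(SA)$ involves only $\partial_{x}\tau^{a}$ and $\partial_{x}r^{a}$, which are bounded in $L^{\infty}$ uniformly in $\epsilon$ precisely because $u^{B,0}=u^{s,0}=u^{B,1}=u^{s,1}\equiv0$. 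The term $\int_{0}^{\infty}(dA(U^{a})\cdot U)\partial_{x}U^{a}\cdot SU\,dx$ is $\le C\|\partial_{x}U^{a}\|_{L^{\infty}}\|U\|^{2}=O(1)\|U\|^{2}$ by the same bounds.

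Next I would treat the viscous term $-\epsilon\int_{0}^{\infty}SB(U^{a})\partial_{x}^{2}U\cdot U\,dx$. Integration by parts converts it into $\epsilon\int_{0}^{\infty}SB(U^{a})\partial_{x}U\cdot\partial_{x}U\,dx+\epsilon\int_{0}^{\infty}\partial_{x}(SB(U^{a}))\,\partial_{x}U\cdot U\,dx$ minus a boundary term at $x=0$, which vanishes since $SB(U^{a})=\frac{r^{a}}{\tau^{a}}\,\mathrm{diag}\{0,\lambda+2\mu,\mu\}$ has zero first row and $U_{II}(t,0)=0$. By Proposition \ref{p6}(ii) the first integral dominates $c_{0}\epsilon\|\partial_{x}U_{II}\|^{2}$. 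Since $SB(U^{a})$, hence $\partial_{x}(SB(U^{a}))$, has vanishing first row and column, the cross term only couples $\partial_{x}U_{II}$ to $U_{II}$ and is bounded by $\tfrac{c_{0}}{8}\epsilon\|\partial_{x}U_{II}\|^{2}+C\epsilon\|U\|^{2}$ via Young's inequality, using that $\partial_{x}(r^{a}/\tau^{a})$ is bounded.

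It remains to bound $\int_{0}^{\infty}R^{l}\cdot SU\,dx$ and $\int_{0}^{\infty}N\cdot SU\,dx$. Writing $R^{l}=\epsilon Q_{2}(U^{a}+U)(\partial_{x}U^{a},\partial_{x}U)+\epsilon Q_{2}(U^{a}+U)(\partial_{x}U,\partial_{x}U^{a})$, the first summand is $\epsilon$ times a bounded coefficient times $\partial_{x}U_{II}$ (it is the second-slot argument $\partial_{x}U$ that enters through its $u,v$ components), so its pairing with $SU$ is absorbed into $\tfrac{c_{0}}{8}\epsilon\|\partial_{x}U_{II}\|^{2}+C\epsilon\|U\|^{2}$; the second summand is $\epsilon$ times a bounded coefficient (now also carrying the bounded $\partial_{x}U^{a}_{II}$) times $\partial_{x}\tau$ (the first slot contributes $\partial_{x}$ of its first component), so its pairing with $SU$ is $\le\epsilon M_{1}\|\partial_{x}\tau\|^{2}+C\epsilon\|U\|^{2}$ by Young's inequality — this is exactly the origin of the admissible term $\epsilon M_{1}\|\partial_{x}\tau\|^{2}$ in \eqref{E2}. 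Finally $\int_{0}^{\infty}N\cdot SU\,dx\le C(\|N\|^{2}+\|U\|^{2})$ since $S$ is uniformly bounded. Collecting everything, choosing the constants in the Young inequalities small enough that all $\epsilon\|\partial_{x}U_{II}\|^{2}$ contributions are absorbed by $c_{0}\epsilon\|\partial_{x}U_{II}\|^{2}$, and integrating in $t$ from $0$ (where $U=0$) yields \eqref{E2}. The point requiring care throughout is that the uncontrolled quantity $\partial_{x}U$ must never appear on the right-hand side except through the dissipation $\epsilon\|\partial_{x}U_{II}\|^{2}$ and through the single admissible term $\epsilon\|\partial_{x}\tau\|^{2}$; this forces one to exploit at each step the block structure of $A$, $B$ and $S$ (the vanishing first row and column of $B$ and $SB$, the diagonality of $S$) together with the boundary condition $U_{II}(t,0)=0$.
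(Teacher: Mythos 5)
Your argument is correct in substance, but it follows a genuinely different route from the paper's. The paper splits the estimate in two: it first treats the third component $v$ alone, using that $v$ satisfies a decoupled scalar parabolic equation \eqref{eq134} (this is where the term $\epsilon M_1\|\partial_x\tau\|^2$ is generated, from the coupling $\epsilon\mu(r^a/\tau^a)^2\partial_x v^a\,\partial_x\tau$), and then estimates the remaining pair $w=(\tau,u)^T$ by diagonalizing the $2\times 2$ hyperbolic block via characteristic variables $w=R^aV$ and doing the energy estimate on $V$ (Lemma \ref{L6}), the boundary term disappearing because $(S_1^aA_1^aw,w)|_{x=0}=-2(\tau^a)^{-(\gamma+1)}\tau u|_{x=0}=0$. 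You instead run a single symmetrizer-based estimate on the full $3\times3$ system with $S(U^a)$ from Proposition \ref{p6}; since $S^a_1=(L^a)^TL^a$, this is morally equivalent to the paper's diagonalization but more compact, and it locates the origin of the $\epsilon M_1\|\partial_x\tau\|^2$ term inside $R^l$ rather than inside the $v$-equation — these are the same term, since the third component of $\epsilon Q_2(U^a+U)(\partial_xU,\partial_xU^a)$ is precisely $\epsilon\mu(\cdot)^2\partial_x\tau\,\partial_xv^a$. The paper's decoupling buys a completely explicit treatment of where each dangerous term lives; your version buys brevity at the price of having to invoke the block structure of $A$, $B$, $SB$ at every step, which you do. One imprecision worth flagging: you assert that $\partial_xU^a_{II}$ is bounded, but $\partial_xv^a$ is only $O(\epsilon^{-1/2})$ because $v^{B,0},v^{s,0}\not\equiv0$ (only $\tau^{B,0},u^{B,0},\tau^{s,0},u^{s,0}$ and $u^{B,1},u^{s,1}$ vanish). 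This does not break the proof: the relevant coefficient is then $\epsilon\cdot O(\epsilon^{-1/2})=O(\sqrt{\epsilon})$, and Young's inequality still yields $\sqrt{\epsilon}\|\partial_x\tau\|\,\|v\|\le\epsilon M_1\|\partial_x\tau\|^2+C\|U\|^2$, which is exactly the admissible form in \eqref{E2} — and is exactly the estimate the paper performs. Similarly, in bounding $(dA(U^a)\cdot U\,\partial_xU^a,SU)$ you should appeal to the fact that $A$ depends only on $\tau$ and $r$ (so only the bounded quantities $\partial_x\tau^a,\partial_xu^a$ enter), not to a bound on the full $\|\partial_xU^a\|_{L^\infty}$, and the contribution of $r=\int_0^t u\,ds$ is controlled by $\int_0^t\|U\|^2$ after integration in time. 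With these two points repaired, your proof closes.
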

\begin{proof}
Notice the special structure of the matrix $A$, the third component $v(t,x) $ can be estimated first. The remained components of $U(t,x)$ denoted by $w(t,x)=(\tau,u)^T(t,x)$. Correspondingly, the associated components of $U^a$ denote by $w^a$.

\item[\rm(1)] From the problem \eqref{eq131}, we know that $v(t,x)$ satisfies the following initial-boundary value problem:
\begin{align}\label{eq134}
\begin{cases}
\partial_t v - \epsilon\mu\frac{ (r^a)^2}{\tau^a} \partial^2_x v +\epsilon\mu (\frac{r^a}{\tau^a})^2(\partial_x \tau^a \partial_x v + \partial_x v^a\partial_x \tau)
 = N_3,\\
v(t,0)= v(0,x)=0,\\
\end{cases}
\end{align}
where $N_3$ is the third component of $N$ given in\eqref{eq131}.
Multiply \eqref{eq134} by $v$ and integrate the resulting equation with respect to $x$ variable over $ [0,\infty)$ to obtain that
\begin{align}
\frac{1}{2}\frac{d}{dt}\|v(t)\|^2 -\epsilon\mu \left(\frac{(r^a)^2}{\tau^a} \partial^2_x v, v \right)
+\epsilon\mu \left((\frac{r^a}{\tau^a})^2[\partial_x \tau^a \partial_x v + \partial_x v^a\partial_x \tau],v\right)= (N_3,v).
\end{align}
It is easy to have
\begin{align}
\begin{split}
-\epsilon\mu \left(\frac{(r^a)^2}{\tau^a} \partial^2_x v, v \right)
&= \epsilon\mu \left(\frac{(r^a)^2}{\tau^a} \partial_x v,\partial_x v \right) + \epsilon \mu \left(\partial_x\left(\frac{(r^a)^2}{\tau^a} \right)\partial_x v,v \right)\\
&\geq \epsilon \mu c_1 \|\partial_x v\|^2 - C_2\sqrt{\epsilon}\|\partial_x v\|\cdot \|v\|\\
&\geq\frac{1}{2}\epsilon\mu c_1 \|\partial_x v(t)\|^2 - C_3\|v(t)\|^2,
\end{split}
\end{align}
where $c_1= (\underline{r})^2/\tilde{\tau} $  and $C_i> 0 \ \ (i=2,3)$ is independent of $\epsilon$. And
\begin{align}
\begin{split}
-&\epsilon\mu \left((\frac{r^a}{\tau^a})^2[\partial_x \tau^a \partial_x v + \partial_x v^a\partial_x \tau],v\right)\\
&\lesssim \sqrt{\epsilon}\|\partial_x v\|\cdot\|v\| + \sqrt{\epsilon}\|\partial_x \tau \|\cdot\|v\|
\leq \frac{1}{4}\epsilon\mu c_1 \|\partial_x v\|^2 + C_4\|v\|^2 + \epsilon M_1 \|\partial_x \tau\|^2,
\end{split}
\end{align}
for a constant $C_4>0$ independent of $\epsilon$.
Then
$$\frac{1}{2}\frac{d}{dt}\|v(t)\|^2 + \frac{1}{4}\mu c_1 \epsilon\|\partial_x v(t)\|^2 \leq C \|v(t)\|^2 + \epsilon M_1\|\partial_x \tau(t)\|^2 + C \|N_3\|^2, $$
by using Gronwall inequality that
\begin{align}\label{E2-1}
\|v(t)\|^2 +\epsilon\int_0^t \|\partial_x v(s)\|^2 ds \leq C \int_0^t \|N_3\|^2 + \epsilon M_1\|\partial_x \tau(s)\|^2 ds.
\end{align}
\item[\rm(2)] To estimate $w(t,x)$, from the problem \eqref{eq131} we have the following initial-boundary value problem :
\begin{equation}\label{e2}
\begin{cases}
\partial_t w + A_1(U^a)\partial_x w +dA_1(U^a)\cdot w\partial_x w^a -\epsilon B_1(U^a)\partial^2_x w +R^s =N^s,\\
w(0,x)= 0,\\
w(t,0)= 0,
\end{cases}
\end{equation}
where $R^s, N^s $ are the terms $R^l, N$ given in \eqref{eq131} but without the  third component, and
\[
A_1(U^a)=
\left(
\begin{array}{cc}
0, & -r^a \\
-r^a(\tau^a)^{-1-\gamma}, & 0
\end{array}
\right),\ \quad
\\
B_1(U^a)= \frac{(r^a)^2}{\tau^a} \cdot diag\{0,\lambda +2 \mu\}
\]
In the lemma \ref{L6} below we will show that there exists a positive constant $C_2$ independent of $\epsilon$ and $T^\epsilon$, such that the solution $w(t,x)$ to \eqref{e2} satisfies the following estimate for $ t\in [0,T^\epsilon]$:
\begin{align}\label{E2-2}
\|w(t)\|^2 + \epsilon \int_0^t \|\partial_x u(s)\|^2ds \leq C_2 \int_0^t \|N(s)\|^2 + \|U(s)\|^2 + \epsilon M_1 \|\partial_x\tau\|^2 ds
\end{align}
Combining \eqref{E2-1} with \eqref{E2-2}, one can obtain \eqref{E2} when $ M_1$ is properly small.
\end{proof}
Next we try to verify the estimate\eqref{E2-2}. We observe that Proposition \ref{p6} still holds for the matrixes $A_1(U^a),B_1(U^a)$ given in problem \eqref{e2} with respect to the symmetrizer
$$S_1(U^a)= (r^a)^{-1} \cdot diag\{(\frac{1}{\tau^a})^{\gamma+1},1\}$$
Denote every function $f(U^a)$ by $f^a$, and the left,right eigenvectors of $ A_1(U^a) $ by $L^a_i, R^a_i,$ respectively, with the normalization $L^a_iR^a_j= \delta_{ij}$.\\
Set $L^a=((L^a_1)^T,(L^a_2)^T)^T$ and $R^a=(R^a_1,R^a_2)$. then we have
$$L^aA^a_1R^a = D^a \triangleq  diag\{-c^a,c^a\},\quad S^a_1= (L^a)^T L^a \quad \text{and} \ \ L^aR^a= I$$
\\
To estimate $w(t,x)$  diagonalize the equation of \eqref{e2} by setting $w=R^aV $ in \eqref{e2} and get
\begin{align}\label{eq135}
\begin{split}
\partial_t V +D^a \partial_x V + L^a (\partial_t R^a\cdot V +& A^a_1\partial_x R^a \cdot V + dA^a_1\cdot R^a V\cdot \partial_x w^a)\\
&- \epsilon L^sB^a_1 \partial^2_x(R^aV)+L^a R^s =L^aN^s.
\end{split}
\end{align}
By using \eqref{eq135} we have the following result, which can be used to obtain the estimate \eqref{E2-2}.
\begin{lemma}\label{L6}
Under the assumption of the Theorem \ref{T1}, there exist positive constants $C, \ \ M_1$, independent of  $\epsilon$ and $T^\epsilon$, such that the solution$w(t,x)$ to problem \eqref{e2} satisfies the estimate for all $t\in[0,T^\epsilon]$,
\begin{align}\label{e3}
\|w(t)\|^2 + \epsilon \int_0^t \|\partial_x u(s)\|^2ds \leq C_2 \int_0^t \|N(s)\|^2 + \|U(s)\|^2 + \epsilon M_1 \|\partial_x\tau\|^2 ds
\end{align}
\end{lemma}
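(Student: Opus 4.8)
The plan is to run a weighted $L^2$ energy estimate on the diagonalized system \eqref{eq135}, using the symmetrizer structure $S_1^a=(L^a)^TL^a$ and the uniform boundedness of $U^a,\partial_tU^a,\partial_xU^a$ in $\epsilon$ — the last of which holds precisely because $\tau^{B,0}=u^{B,0}=\tau^{s,0}=u^{s,0}=u^{B,1}=u^{s,1}\equiv 0$, so that the $O(\sqrt\epsilon)$ profiles $\tau^{B,1},\tau^{s,1}$ can be differentiated once in $x$ without losing a power of $\epsilon^{-1/2}$. First I would multiply \eqref{eq135} by $2V$ and integrate over $x\in(0,\infty)$. The time term gives $\tfrac{d}{dt}\|V\|^2$, which is comparable to $\tfrac{d}{dt}\|w\|^2$ with constants depending only on $\tilde\tau$ and $\underline r$. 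The transport term $2(D^a\partial_xV,V)$ integrates by parts into $\int\partial_xc^a\,(V_1^2-V_2^2)\,dx$, bounded by $C\|V\|^2$ since $\partial_xc^a$ is uniformly bounded, plus a boundary contribution at $x=0$; this vanishes because the boundary condition in \eqref{e2} (with $u(t,0)=0$ forcing $V_1(t,0)=V_2(t,0)$, the left eigenvectors of $A_1(U^a)$ being proportional to $(1,\pm(\tau^a)^{(\gamma+1)/2})$) meets the opposite-sign diagonal of $D^a=\mathrm{diag}\{-c^a,c^a\}$, while $V$ decays as $x\to\infty$.

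Next I would dispose of the zeroth-order block $L^a(\partial_tR^a\cdot V+A_1^a\partial_xR^a\cdot V+dA_1^a\cdot R^aV\cdot\partial_xw^a)$: paired with $V$ it is $\lesssim\|V\|^2\lesssim\|U\|^2$ since $\partial_tR^a,\partial_xR^a,\partial_xw^a$ are all $O(1)$. The source terms $L^a(N^s-R^s)$ contribute $\lesssim\|N\|^2+\|w\|^2$ together with, from the $\epsilon Q_2$-type piece of $R^s$, a term of the form $\epsilon\int(\cdots)\big[\partial_x\tau^a\,\partial_xu+\partial_xu^a\,\partial_x\tau\big]u\,dx$, which Young's inequality bounds by $\delta\epsilon\|\partial_xu\|^2+C\|u\|^2+\epsilon M_1\|\partial_x\tau\|^2$. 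The delicate contribution is the viscous term $-\epsilon(L^aB_1^a\partial_x^2(R^aV),V)$: expanding $\partial_x^2(R^aV)=R^a\partial_x^2V+2\partial_xR^a\,\partial_xV+\partial_x^2R^a\,V$, the principal part integrates by parts (its $x=0$ boundary term vanishing because $B_1$ annihilates the $\tau$-component and $u(t,0)=0$) into $\epsilon(L^aB_1^aR^a\partial_xV,\partial_xV)\ge c_0\epsilon\|\partial_xu\|^2$ by Proposition \ref{p6}, up to a commutator $\lesssim\epsilon\|\partial_xV\|\|V\|$; the cross term $2\epsilon(L^aB_1^a\partial_xR^a\partial_xV,V)$ and this commutator are handled by splitting $\|\partial_xV\|^2\lesssim\|\partial_x\tau\|^2+\|\partial_xu\|^2$, absorbing the $\|\partial_xu\|^2$ part into the dissipation and sending the $\epsilon\|\partial_x\tau\|^2$ part to the right-hand side; finally $\epsilon(L^aB_1^a\partial_x^2R^a\,V,V)=O(\sqrt\epsilon)\|V\|^2$ because, although $\partial_x^2\tau^a$ is only $O(\epsilon^{-1/2})$, it enters with a factor $\epsilon$.

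Collecting all the above, fixing $\delta$ small, I get $\tfrac{d}{dt}\|w(t)\|^2+c\,\epsilon\|\partial_xu(t)\|^2\le C\big(\|N(t)\|^2+\|U(t)\|^2+\epsilon M_1\|\partial_x\tau(t)\|^2\big)$; since $w(0,\cdot)=0$, integrating in $t$ yields \eqref{e3}. The main obstacle is the bookkeeping inside the viscous term: because $B_1$ is degenerate one only controls $\epsilon\|\partial_xu\|^2$, so every occurrence of $\partial_x\tau$ — those produced by the change of variables $w=R^aV$, by $R^s$, and by the coefficient derivatives — must be tracked carefully and shown to cost no more than $\epsilon M_1\|\partial_x\tau\|^2$ with $M_1$ still at our disposal, which is exactly what makes the subsequent combination with the $v$-estimate \eqref{E2-1} in Lemma \ref{L2} close.
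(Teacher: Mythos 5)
Your proposal is correct and follows essentially the same route as the paper: diagonalize via $w=R^aV$, run the $L^2$ energy estimate on $V$, kill the hyperbolic and viscous boundary terms using $u(t,0)=0$ together with the structure of $S_1^aA_1^a$ and $S_1^aB_1^a$, extract dissipation only in $\epsilon\|\partial_x u\|^2$ from the degenerate $B_1$, and push every stray $\partial_x\tau$ to the right-hand side with the adjustable constant $\epsilon M_1$. The only (harmless) difference is that the paper uses $R^aV=w$ to write the viscous term directly as $-\epsilon(S_1^aB_1^a\partial_x^2w,w)$ instead of expanding $\partial_x^2(R^aV)$ by Leibniz, which spares it the cross and second-derivative commutator terms you track by hand.
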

\begin{proof}
Multiplying \eqref{eq135} by $V$ and integrating the resulting equation with respect to $x$ in $[0,\infty)$, obtain that
\begin{align}
\begin{split}
\frac{1}{2}\frac{d}{dt}\|V\|^2 +(D^a\partial_x V,V) + (L^a(A^a_1\partial_xR^a\cdot V + dA^a_1\cdot R^aV \partial_x w^a),V)\\
- \epsilon(L^aB^a_1\partial^2_x(R^aV),V)+ (L^a(\partial_tR^a\cdot V- R^s),V) = (L^aN^s,V).
\end{split}
\end{align}
Each term in above can be treat as follows.
\begin{align*}
(D^a\partial_x V,V)&= -\frac{1}{2}(\partial_x D^a\cdot V,V)+ \frac{1}{2}(D^aV,V)|_{x=0}
=-\frac{1}{2}(\partial_x D^a\cdot V,V)+ \frac{1}{2}(L^aA^a_1R^aV,V)|_{x=0}\\
&=-\frac{1}{2}(\partial_x D^a\cdot V,V)+ \frac{1}{2}(L^aA^a_1R^a V,L^a W)|_{x=0}\\
&=-\frac{1}{2}(\partial_x D^a\cdot V,V)+ \frac{1}{2}(S^a_1A^a_1 w,w)|_{x=0}\\
&=-\frac{1}{2}(\partial_x D^a\cdot V,V)
\end{align*}
then
$$(D^a\partial_x V,V)= -\frac{1}{2}(\partial_x D^a\cdot V,V)\geq -C_7\|w\|^2$$
Next we consider the term
$$(L^a(A^a_1\partial_xR^a\cdot V + dA^a_1\cdot R^aV \partial_x w^a),V)= (L^aA^a_1{\color {blue}\partial_xR^a}\cdot V,V)+ (L^a dA^a_1\cdot R^aV {\color {blue}\partial_x w^a}),V)\leq C_8\|w\|^2$$
It remains to estimate the term $\epsilon(L^aB^a_1\partial^2_x(R^aV),V)$.
\begin{align*}
\begin{split}
-& \epsilon(L^aB^a_1\partial^2_x(R^aV),V)= -\epsilon(S^a_1 B^a_1\partial^2_x w,w)\\
&= -\epsilon(\partial_x(S^a_1 B^a_1\partial_x w),w)+ \epsilon(\partial_x(S^a_1 B^a_1)\partial_x w,w)\\
&=\epsilon(S^a_1 B^a_1\partial_x w,\partial_x w)+ \epsilon(\partial_x(S^a_1 B^a_1)\partial_x w,w)\\
&\geq c_1\epsilon \|\partial_x u\|^2- C\epsilon \|\partial_x u\|\cdot\|u\|\\
&\geq c_1 \epsilon\|\partial_x u\|^2 - C\|u\|^2,
\end{split}
\end{align*}
It is easy to have
\begin{align*}
\begin{split}
&(L^a\partial_tR^a\cdot V,V)\leq C\|w\|^2,\\
&- (L^aR^s,V)= -(L^aR^s,L^a w)\leq C\epsilon \|u\|\cdot\|\partial_x w\|\leq C\sqrt{\epsilon} \|u\|\cdot\|\partial_x w\|\leq C\|u\|^2 + \epsilon M_1\|\partial_x w\|^2,
\end{split}
\end{align*}
then we get \eqref{e3} by choosing $M_1$ small.
\end{proof}
To close an energy estimate from \eqref{E2}, the term $\epsilon \int_0^t \|\partial_x \tau(s)\|^2 ds$ need to be control. This is the aim of following lemma.
\begin{lemma}\label{L3}
Under the same assumption as  in Theorem \ref{T1}, for any $\alpha>0$, there exists a constant $C(\alpha)>0 $, independent of $\epsilon$ and $T^\epsilon$, such that for all $t\in [0,T^\epsilon]$,
\begin{align}\label{E3}
\begin{split}
&\epsilon^2 \|\partial_x \tau (t)\|^2
-\epsilon \left( \frac{1}{(\lambda +2\mu)}\frac{\tau^a}{r^a} u(t), \partial_x \tau(t) \right)
+ \epsilon \int_0^t \|\partial_x \tau (s)\|^2 ds \\
&\leq C(\alpha)\int_0^t \epsilon \|\partial_x u(s)\|^2 + \epsilon^2\|\partial_x N_1(s)\|^2 + \epsilon \|N(s)\|^2 + \|U(s)\|^2 ds + \alpha\epsilon \int_0^t \|\partial_t \tau(s)\|^2 ds.
\end{split}
\end{align}
\end{lemma}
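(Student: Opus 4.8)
The plan is to recover a bound for $\partial_x\tau$ from the momentum equation: since the viscosity in \eqref{eq131} acts only on the velocity components, $\partial_x\tau$ is absent from the dissipation produced in Lemma \ref{L2}, whereas it enters the momentum equation with a coercive sign. Writing out the first two scalar components of \eqref{eq131}, the (inviscid) continuity-type equation is
\begin{equation*}
\partial_t\tau - r^a\,\partial_x u + \big(dA(U^a)\cdot U\,\partial_x U^a\big)_1 = N_1 ,
\end{equation*}
and the momentum equation is
\begin{equation*}
\partial_t u - r^a(\tau^a)^{-\gamma-1}\partial_x\tau - \epsilon(\lambda+2\mu)\frac{(r^a)^2}{\tau^a}\,\partial^2_x u + \big(dA(U^a)\cdot U\,\partial_x U^a\big)_2 + R^{l}_2 = N_2 ,
\end{equation*}
the first slots of $B(U^a)$ and of $Q_2$ being zero. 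I would multiply the momentum equation by $\epsilon\,\frac{\tau^a}{(\lambda+2\mu)r^a}\,\partial_x\tau$, integrate over $x\in\mathbf{R}^+$ and then over $s\in[0,t]$. This weight is chosen so that two simplifications occur: the term $-r^a(\tau^a)^{-\gamma-1}\partial_x\tau$ becomes $-\epsilon\,\frac{(\tau^a)^{-\gamma}}{\lambda+2\mu}|\partial_x\tau|^2$, which by Assumption \ref{as1} and the two-sided bounds on $r^a,\tau^a$ is at most $-c_0\,\epsilon\|\partial_x\tau\|^2$ for some $c_0>0$ — this coercive term will produce $\epsilon\int_0^t\|\partial_x\tau\|^2$ on the left of \eqref{E3} — and the viscous term collapses to $-\epsilon^2 r^a\,\partial^2_x u\,\partial_x\tau$.

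Next I would treat the two remaining principal contributions. For $\epsilon\,\frac{\tau^a}{(\lambda+2\mu)r^a}\,\partial_t u\,\partial_x\tau$, after integrating in $x$ I integrate by parts in time: it equals $\epsilon\,\frac{d}{ds}\big(\frac{\tau^a}{(\lambda+2\mu)r^a}u,\partial_x\tau\big)$ plus commutators in which $\partial_t$ falls on the coefficient or in which $u\,\partial_t\partial_x\tau$ appears; since $U(0,x)=0$ the time-boundary term at $s=0$ vanishes, and moving the resulting endpoint term to the left gives exactly $-\epsilon\big(\frac{1}{\lambda+2\mu}\frac{\tau^a}{r^a}u(t),\partial_x\tau(t)\big)$. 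In the commutator containing $\partial_t\partial_x\tau$ I substitute $\partial_t\tau = r^a\partial_x u - \big(dA(U^a)\cdot U\,\partial_x U^a\big)_1 + N_1$ from the continuity equation and integrate by parts once more in $x$ — the boundary term at $x=0$ again vanishes because $u(t,0)=0$ — which leaves contributions controlled by $\epsilon\|\partial_x u\|^2$, $\epsilon\|N\|^2$ and $\|U\|^2$, together with one term $\epsilon\int\partial_x\big(\frac{\tau^a}{(\lambda+2\mu)r^a}\big)u\,\partial_t\tau$ that is not dominated by the coercivity and which, by Young's inequality, is bounded by $C(\alpha)\|U\|^2 + \alpha\epsilon\|\partial_t\tau\|^2$; this is the source of the $\alpha$-term in \eqref{E3}. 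For the viscous contribution $-\epsilon^2 r^a\,\partial^2_x u\,\partial_x\tau$, differentiating the continuity equation in $x$ gives $r^a\partial^2_x u = \partial_t\partial_x\tau - (\partial_x r^a)\partial_x u + \partial_x\big((dA(U^a)\cdot U\,\partial_x U^a)_1\big) - \partial_x N_1$, hence $-\epsilon^2 r^a\partial^2_x u\,\partial_x\tau = -\frac{\epsilon^2}{2}\partial_t|\partial_x\tau|^2 + \epsilon^2(\partial_x r^a)\partial_x u\,\partial_x\tau - \epsilon^2\partial_x\big((dA(U^a)\cdot U\,\partial_x U^a)_1\big)\partial_x\tau + \epsilon^2\partial_x N_1\,\partial_x\tau$; integrating in $x$ and $s$ and using $\partial_x\tau(0,\cdot)=0$, the first piece yields (a fixed multiple of) $\epsilon^2\|\partial_x\tau(t)\|^2$ on the left, while the last piece is bounded by $\frac{c_0}{8}\epsilon\|\partial_x\tau\|^2 + C\epsilon^2\|\partial_x N_1\|^2$, which accounts for the term $\epsilon^2\|\partial_x N_1(s)\|^2$ on the right. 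All the remaining products of $\partial_x\tau$ against $N_2$, against $R^{l}_2 = \epsilon\,Q_2(U^a+U)(\partial_x U^a,\partial_x U)+\cdots$, against $\big(dA(U^a)\cdot U\,\partial_x U^a\big)_2$, and against the lower-order remainders above are estimated by Young's inequality, each time absorbing a small multiple of $\epsilon\|\partial_x\tau\|^2$ into the coercive term and leaving quantities bounded by $C\big(\epsilon\|N\|^2 + \|U\|^2 + \epsilon\|\partial_x u\|^2\big)$. Collecting everything over $[0,t]$ yields \eqref{E3}.

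I expect the true obstacle to be the handling of the $\partial_t\tau$ contribution: it is not dominated by the dissipation generated in this estimate, so it has to be carried with the free small parameter $\alpha$ and paid off only afterwards, when \eqref{E3} is combined with the estimates of Lemmas \ref{L2}, \ref{L6}, \ref{L1} and with a companion bound for $\epsilon\|\partial_t\tau\|^2$ (the latter being read off from the continuity equation, which trades $\partial_t\tau$ for $\partial_x u$ and source terms). The secondary technical point is the bookkeeping of the boundary terms at $x=0$: they all vanish thanks to $u(t,0)=0$, but only if each integration by parts is arranged so that it meets $u(t,0)=0$ rather than producing an uncontrolled $\partial_x u(t,0)$ or $\partial_x\tau(t,0)$. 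The commutator terms in which $\partial_x$ or $\partial_t$ hits the smooth, bounded factors $r^a,\tau^a$ are routine, using the $L^\infty$ bounds on $r^a$, $\tau^a$ and their $x$-derivatives established above.
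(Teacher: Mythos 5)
Your proposal is correct and is essentially the paper's own argument in a slightly reorganized form: testing the momentum equation against $\epsilon\,\frac{\tau^a}{(\lambda+2\mu)r^a}\partial_x\tau$ and substituting the differentiated continuity equation into the viscous term yields the same identity the paper obtains by solving the momentum equation for $\partial^2_x u$, inserting it into $\partial_t(\partial_x\tau)+r^a\partial^2_x u+\cdots=\partial_x N_1$, and pairing with $\epsilon^2\partial_x\tau$. All the key points match: the coercive pressure term producing $\epsilon\int_0^t\|\partial_x\tau\|^2$, the $\epsilon^2\|\partial_x\tau(t)\|^2$ endpoint from $\partial_t\partial_x\tau\cdot\partial_x\tau$, and the time--space integration by parts of the $\partial_t u\,\partial_x\tau$ cross term (using $U(0,x)=0$ and $u(t,0)=0$) that generates both the boundary term $-\epsilon\bigl(\frac{1}{\lambda+2\mu}\frac{\tau^a}{r^a}u(t),\partial_x\tau(t)\bigr)$ and the $\alpha\epsilon\int_0^t\|\partial_t\tau\|^2$ contribution later absorbed via Lemma \ref{L4}.
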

\begin{proof}
At first, we take the derivative of the first equation of \eqref{eq131} with respect to the spacial variable $x$, this yields
\begin{align}\label{E3-1}
\partial_t(\partial_x \tau) + r^a\partial^2_x u + \partial_x r^a\partial_x u= \partial_x N_1,
\end{align}
and then, by using the second equation of the system\eqref{eq131} to express $\partial^2_x u$, obtain that
\begin{align}\label{E3-2}
\begin{split}
\partial^2_x u =& \frac{1}{(\lambda +2\mu)\epsilon}\frac{\tau^a}{(r^a)^2}(\partial_t u - r^a(\frac{1}{\tau^a})^{\gamma+1}\partial_x \tau - N_2)\\
&+ O(1)(\partial_x u + \partial_x \tau)+ \frac{O(1)}{\epsilon}(|r| +|\tau|),
\end{split}
\end{align}
and hence we get
\begin{align}\label{E3-3}
\begin{split}
&\partial_t(\partial_x \tau) - \frac{1}{(\lambda +2\mu)\epsilon}\frac{\tau^a}{ r^a}(\partial_t u - r^a(\frac{1}{\tau^a})^{\gamma+1}\partial_x \tau)\\
&= \partial_x N_1 + \partial_x(r^a)\partial_x u - O(1)\frac{N_2}{\epsilon}
- O(1)(\partial_x u + \partial_x \tau)- \frac{O(1)}{\epsilon}(|r| +|\tau|).
\end{split}
\end{align}

We take the scalar product of \eqref{E3-3} with $\epsilon^2 \partial_x \tau $
get that
\begin{align}\label{E3-4}
\begin{split}
&\epsilon^2 \|\partial_x \tau(t)\|^2 + \frac{c_1\epsilon}{2} \int_0^t \|\partial_x \tau(s)\|^2 ds
- \int_0^t \epsilon \left( \frac{1}{(\lambda +2\mu)}\frac{\tau^a}{r^a}\partial_t u(s), \partial_x \tau(s)\right) ds\\
&\leq C\int_0^t \epsilon^2 \|\partial_x N_1(s)\|^2 + \epsilon \|N_2(s)\|^2+  \epsilon\|U\|^2 + \epsilon \|\partial_x u(s)\|^2 ds.
\end{split}
\end{align}

It remains to estimate the term
$\int_0^t \epsilon \left( \frac{1}{(\lambda +2\mu)}\frac{\tau^a}{r^a}\partial_t u(s), \partial_x \tau(s)\right) ds.$
Since
\begin{align*}
\begin{split}
&\epsilon\left( \frac{1}{(\lambda +2\mu)}\frac{\tau^a}{r^a}\partial_t u, \partial_x \tau \right)\\
=&
\epsilon\partial_t \left( \frac{1}{(\lambda +2\mu)}\frac{\rho^a}{r^a} u, \partial_x \rho \right)-
\epsilon \left(\frac{1}{(\lambda +2\mu)}\partial_t(\frac{\tau^a}{r^a})\cdot u, \partial_x \tau \right)
-\epsilon \left(\frac{1}{(\lambda +2\mu)}\frac{\phi}{h}\frac{\tau^a}{r^a} u, \partial_t \partial_x \tau \right),\\
\end{split}\\
and\\
\begin{split}
-\epsilon \left(\frac{1}{(\lambda +2\mu)}\frac{\tau^a}{r^a} u, \partial_t \partial_x \tau \right)=
\epsilon \left(\frac{1}{(\lambda +2\mu)}\partial_x(\frac{\tau^a}{r^a})\cdot u, \partial_t \tau \right)
+\epsilon\left( \frac{1}{(\lambda +2\mu)}\frac{\tau^a}{r^a}\partial_x u, \partial_t \tau \right),
\end{split}
\end{align*}
where the last equation holds by $U_{II}(t,0)=0$.
Thus, using $U(0,x) =0 $ obtain that ,
\begin{align*}
\begin{split}
\epsilon & \int_0^t \left( \frac{1}{(\lambda +2\mu)}\frac{\tau^a}{r^a}\partial_t u(s), \partial_x \tau(s)\right)
-\left( \frac{1}{(\lambda +2\mu)}\frac{\tau^a}{r^a}\partial_x u(s), \partial_t \tau(s)\right) ds\\
&=\epsilon \left( \frac{1}{(\lambda +2\mu)}\frac{\tau^a}{r^a} u(t), \partial_x \tau(t) \right)
-\epsilon \int_0^t \left( \frac{1}{(\lambda +2\mu)}\partial_t(\frac{\tau^a}{r^a})\cdot u(s), \partial_x \tau(s)\right) ds\\
&+\epsilon \int_0^t \left( \frac{1}{(\lambda +2\mu)}\partial_t(\frac{\tau^a}{r^a})\cdot u(s), \partial_t \tau(s)\right) ds
\end{split}
\end{align*}
Then
\begin{align}\label{E3-5}
\begin{split}
\epsilon & \int_0^t \left( \frac{1}{(\lambda +2\mu)}\frac{\tau^a}{r^a}\partial_t u(s), \partial_x \tau(s)\right) ds
-\epsilon \left( \frac{1}{(\lambda +2\mu)}\frac{\tau^a}{r^a} u(t), \partial_x \tau(t) \right)\\
&\leq \alpha\epsilon \int_0^t \|\partial_t \tau(s)\|^2 ds + C(\alpha)\int_0^t \epsilon \|\partial_x u(s)\|^2 ds + C\int_0^t \|u(s)\|^2 +\epsilon^2 \|\partial_x \tau(s)\|^2 ds
\end{split}
\end{align}
Finally, we get \eqref{E3} by combining \eqref{E3-4} with \eqref{E3-5}
\end{proof}
Because of \eqref{E3}, we need to estimate  $\epsilon \int_0^t \|\partial_t U(s)\|^2 ds $ .
\begin{lemma}\label{L4} Under the same assumption as given in Theorem \ref{T1}, there exists a positive constant $C_3$, independent of $\epsilon$ and $T^\epsilon$, such that for all $ t\in [0,T^\epsilon]$,
\begin{align}\label{E4}
\epsilon^2 \|\partial_x U_{II}(t)\|^2 + \epsilon \int_0^t \|\partial_t U(s)\|^2ds \leq C_3 \int_0^t \epsilon \|N(s)\|^2 + \epsilon\|\partial_x U(s)\|^2 + \|U(s)\|^2 ds.
\end{align}
\end{lemma}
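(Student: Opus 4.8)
The plan is to run a weighted energy estimate on the error equation \eqref{eq131} using the multiplier $\epsilon\,\partial_t U$. The extra factor $\epsilon$ is exactly what produces the prefactor $\epsilon$ in front of $\int_0^t\|\partial_t U(s)\|^2\,ds$ and keeps every term in \eqref{E4} homogeneous in powers of $\epsilon$. First I would take the $L^2_x$ scalar product of \eqref{eq131} with $\epsilon\,\partial_t U$ and integrate in $t$ over $[0,t]$, using $U(0,\cdot)=0$ so that $\partial_x U(0,\cdot)=0$; the diagonal term $\epsilon(\partial_t U,\partial_t U)=\epsilon\|\partial_t U\|^2$ is the good quantity, and all the other terms must be absorbed into it, or into $\int_0^t\|\partial_x U\|^2\,ds$, or into $\int_0^t\|U\|^2\,ds$.

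The viscous term is handled by integrating $-\epsilon^2(B(U^a)\partial_x^2 U,\partial_t U)$ by parts in $x$. Since $B(U^a)$ is symmetric (diagonal), this produces $\frac{\epsilon^2}{2}\frac{d}{dt}\int B(U^a)|\partial_x U|^2\,dx$ together with the lower-order pieces $-\frac{\epsilon^2}{2}\int(\partial_t B(U^a))|\partial_x U|^2\,dx$ and $\epsilon^2\int(\partial_x B(U^a))\partial_t U\cdot\partial_x U\,dx$. The boundary term at $x=0$ is $-\epsilon^2\,B(U^a)\partial_x U\cdot\partial_t U\big|_{x=0}$, and it vanishes because $B(U^a)$ annihilates the first component while the boundary condition $U_{II}(t,0)\equiv0$ forces $\partial_t U_{II}(t,0)=0$ (the contribution at $x=+\infty$ vanishes by decay). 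After integrating in time, the leading viscous piece becomes $\frac{\epsilon^2}{2}\int B(U^a(t))|\partial_x U(t)|^2\,dx\ge c_0\,\epsilon^2\|\partial_x U_{II}(t)\|^2$ by the coercivity of $B$ on the $U_{II}$-block proved in Proposition \ref{p6}, which is precisely the first term on the left of \eqref{E4}. The two lower-order viscous pieces are controlled, using $\|\partial_x B(U^a)\|_{L^\infty}+\|\partial_t B(U^a)\|_{L^\infty}\le C$ --- which holds since $\tau^a$ is bounded away from $0$ and $\partial_x\tau^a,\partial_x u^a,\partial_x r^a,\partial_t\tau^a$ are $O(1)$, the radial unknowns carrying no leading-order layer --- by $C\epsilon\int_0^t\|\partial_x U\|^2\,ds+\delta\,\epsilon\int_0^t\|\partial_t U\|^2\,ds$, with $\delta>0$ kept for later absorption.

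For the remaining terms I would use Cauchy--Schwarz and Young's inequality. The first-order terms $A(U^a)\partial_x U$ and $dA(U^a)\cdot U\,\partial_x U^a$ involve only the $O(1)$ quantities $\partial_x\tau^a,\partial_x u^a,\partial_x r^a$ of the approximate solution (the matrix $A$ has no $v$-entry, so $\partial_x v^a$ never enters there), hence contribute at most $\delta\epsilon\|\partial_t U\|^2+C\epsilon\|\partial_x U\|^2+C\epsilon(\|U\|^2+\|r\|^2)$; the source gives $\epsilon(N,\partial_t U)\le\delta\epsilon\|\partial_t U\|^2+C\epsilon\|N\|^2$; and for $R^l=\epsilon\bigl(Q_2(U^a+U)(\partial_x U^a,\partial_x U)+Q_2(U^a+U)(\partial_x U,\partial_x U^a)\bigr)$ one uses $\|R^l\|\lesssim\epsilon\|\partial_x U_{II}\|+\sqrt{\epsilon}\,\|\partial_x\tau\|\lesssim\sqrt{\epsilon}\,\|\partial_x U\|$, so that $\epsilon(R^l,\partial_t U)\le\delta\epsilon\|\partial_t U\|^2+C\epsilon\|\partial_x U\|^2$. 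Folding $\|r(s)\|^2\lesssim\int_0^s\|U(\sigma)\|^2\,d\sigma$ into $\int_0^t\|U\|^2\,ds$, choosing $\delta$ small enough to absorb all the $\delta\epsilon\|\partial_t U\|^2$ terms on the left, and using $\epsilon\le1$, one obtains \eqref{E4}. The step I expect to demand the most care is the bookkeeping of $\epsilon$-powers against the vortex-/boundary-layer derivative $\partial_x v^a=O(\epsilon^{-1/2})$ that appears in $R^l$ (and in the viscous quadratic remainders inside $N$, already absorbed by Lemma \ref{L1}): it is precisely the weight $\epsilon$ in the multiplier, together with the structural vanishing $u^{B,0}=u^{s,0}\equiv0$, that keeps every such term at the correct order.
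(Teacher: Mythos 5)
Your proposal is correct and follows essentially the same route as the paper: the paper multiplies \eqref{eq131} by $S^a\partial_t U$, integrates the viscous term by parts using the block structure of $S^aB^a$ and $\partial_t U_{II}(t,0)=0$ to produce $-\tfrac{\epsilon}{2}\partial_t(S^aB^a\partial_x U,\partial_x U)$, and then invokes Proposition \ref{p6} for the coercivity on the $U_{II}$-block, which after weighting by $\epsilon$ gives exactly \eqref{E4}. Your only deviation is using the plain multiplier $\epsilon\,\partial_t U$ instead of $S^a\partial_t U$, which is immaterial here since $B(U^a)$ is already diagonal and nonnegative on the $U_{II}$-block.
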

\begin{proof}
Multiplying $ S^a\partial_t U$ on both side of the equations \eqref{eq131},then
\begin{align}\label{E4-1}
\begin{split}
\int_0^t \|\partial_t U(s)\|^2 ds - \epsilon \int_0^t \left(S^aB^a\partial^2_x U(s),\partial_t U(s) \right) ds\\
\leq C\int_0^t \|N(s)\|^2 + \frac{1}{\epsilon}\|U(s)\|^2 + \|\partial_x U(s)\|^2  ds
\end{split}
\end{align}
According to the special structure of $S^aB^a$ and $\partial_t U_{II}(t,0)=0$, using an integration by parts that
\begin{align}\label{E4-2}
\begin{split}
\epsilon(S^aB^a\partial^2_x U(s),\partial_t U(s)) &= -\epsilon (S^aB^a\partial_x U, \partial_t\partial_x U)
 - \epsilon (\partial_x(S^aB^a)\partial_x U, \partial_t U)\\
&=-\frac{\epsilon}{2}\partial_t (S^aB^a\partial_x U,\partial_x U) +O(1)\epsilon (\|\partial_xU\|\|\partial_t U\|+ \|\partial_x U_{II}\|^2)
\end{split}
\end{align}
Therefore, \eqref{E4} obtained by plugging \eqref{E4-1} into \eqref{E4-2} and using Proposition \ref{p6}
\end{proof}

\subsection{End of the proof of Theorem \ref{T1}}

At this stage, we have all the elements need to  prove Theorem \ref{T1}.\\
First, by adding \eqref{E3} to $2\alpha\cdot \eqref{E4}$ and choosing $\alpha$ small,
\begin{align}\label{M1}
\begin{split}
\epsilon^2\|\partial_x\tau(t)\|^2& + 2\alpha \epsilon^2\|\partial_x U_{II}(t)\|^2
-\epsilon \left( \frac{1}{(\lambda +2\mu)}\frac{\tau^a}{r^a} u(t), \partial_x \tau(t) \right)\\
&+ \epsilon \int_0^t \|\partial_x\tau(s)\|^2 +\alpha \epsilon \int_0^t \|\partial_t U(s)\|^2 ds\\
&\leq C_2 \int_0^t \epsilon \|N(s)\|^2 + \|U(s)\|^2 + \epsilon^2 \|\partial_x N_1(s)\|^2 + \epsilon \|\partial_x U_{II}(s)\|^2 ds,
\end{split}
\end{align}
for a constant $C_2>0$.
Second, by calculating  $ \eqref{E2} +4CM_1C_2\cdot \eqref{M1}$
\begin{align}\label{M2}
\begin{split}
\|U(t)\|^2& + 4CM_1C_2\left\{ \epsilon^2 (\|\partial_x\tau(t)\|^2 + 2\alpha\|\partial_x U_{II}(t)\|^2)
-\epsilon \left(\frac{1}{(\lambda +2\mu)}\frac{\tau^a}{r^a} u(t), \partial_x \tau(t) \right)\right\}\\
&+\epsilon \int_0^t (1-4M_1CC_2)\|\partial_x U_{II}(s)\|^2 + 2CM_1\|\partial_x\tau(s)\|^2 + 4CM_1\alpha \|\partial_t U(s)\|^2 ds\\
&\leq C_5 \int_0^t \|U(s)\|^2 +\|N(s)\|^2 +\epsilon^2 \|\partial_x N_1(s)\|^2 ds.
\end{split}
\end{align}
Note that
$$4CM_1C_2\epsilon \left( \frac{1}{(\lambda +2\mu)}\frac{\tau^a}{r^a} u(t), \partial_x \tau(t) \right)\\
\leq 2C M_1C_2\|U(t)\|^2 + 2CM_1C_2 \epsilon^2\|\partial_x\tau(t)\|^2,$$
So, we have
\begin{equation}
\begin{split}
\|U(t)\|^2 + 4CM_1C_2 \epsilon^2 \|\partial_x\tau(t)\|^2
&-4CM_1C_2\epsilon \left(\frac{1}{(\lambda +2\mu)}\frac{\tau^a}{r^a} u(t), \partial_x \tau(t) \right)\\
&\geq (1-2C M_1C_2)\|U(t)\|^2 + 2CM_1C_2 \epsilon^2\|\partial_x\tau(t)\|^2.
\end{split}
\end{equation}
Thus choose $ M_1$ small satisfying $1-4M_1CC_2 >0$ and $1-2C M_1C_2 >0$,
combining\eqref{M1} and \eqref{M2} obtain that
\begin{align}\label{M3}
E(t) + \epsilon \int_0^t \|\partial_x U(s)\|^2 + \|\partial_t U(s)\|^2 ds
\leq C \int_0^t \|U(s)\|^2 + \|N(s)\|^2+ \epsilon^2\|\partial_x N_1\|^2 ds,
\end{align}
with $E(t) = \|U(t)\|^2 + \epsilon^2\|\partial_xU(t)\|^2.$

To conclude Theorem \ref{T1}, from \eqref{E1} and \eqref{M3}, it remains to estimate $\int_0^t \|\partial^2_x U_{II}(s)\|^2 ds$, we just use the equation \eqref{eq131}, we have
\begin{align}\label{M4}
\begin{split}
\epsilon^3 \int_0^t \|\partial^2_x U_{II}(s)\|^2 ds
&\leq C\int_0^t \epsilon (\|\partial_t U(s)\|^2 + \|\partial_x U(s)\|^2 + \|N(s)\|^2) + \|U(s)\|^2 ds
\end{split}
\end{align}
Thus, by substituting \eqref{E1} and \eqref{M3} into \eqref{M4} deduced that
\begin{align}
\begin{split}
E(t) + &\epsilon\int_0^t \left(\|\partial_x U(s)\|^2 +\|\partial_t U(s)\|^2\right) ds\\
&\leq C\epsilon^M + C_3 \int_0^t E(s) + \epsilon^{p-3}E(s) + \epsilon^{p-1} \left(\|\partial_t U(s)\|^2 + \|\partial_x U(s)\|^2 \right )ds
\end{split}
\end{align}
Hence, for $p\geq3$ and $\epsilon$ small , we have
$$ E(t) + \epsilon\int_0^t \|\partial_x U(s)\|^2 +\|\partial_t U(s)\|^2 ds \leq C\epsilon^M + C \int_0^t E(s) ds ,$$
where $C>0$ is independent of $\epsilon$ and $T^\epsilon$. Last, by using Gronwall inequality, we obtain that
$$E(t)\leq C\epsilon^M \quad \forall t\in [0,T^\epsilon].$$
Noting that
$$ U^a(t,x) = U^{I,0}(t,x) + U^{B,0}(t,\frac{x}{\sqrt{\epsilon}})+U^{s,0}(t,\frac{x-h}{\sqrt{\epsilon}}) +O(\sqrt{\epsilon}),$$
and then, from the estimate \eqref{th1},  using Sobolev embedding theorem, we obtain \eqref{th2} and complete the proof of Theorem \ref{T1}.
\\

\section*{Acknowledgments} The authors would like to thank Q. Zhao and C.J. Liu for stimulating discussions.









\end{document}